\crefname{hypothesis}{Hypothesis}{Hypotheses}
\title{A Primal Staggered Discontinuous Galerkin Method on Polytopal Meshes\thanks{Submitted to the editors DATE.
\funding{The first author was supported by DMS-2309777 and DMS-2309785. The second author was supported by the National Natural Science Foundation of China Project 
12171300. The third author was supported by the National Research Foundation of Korea (NRF) grant funded by the Ministry of Science and ICT (NRF-2022R1A2B5B02002481). The fourth author was supported by the National Key Research and Development Program of China (grant No. 2020YFA0713602, 2023YFA1008803), and the Key Laboratory of Symbolic Computation and Knowledge Engineering of Ministry of Education of China housed at Jilin University.
}}}
\author{
Long Chen
\thanks{Department of Mathematics,
University of California at Irvine, 
Irvine, CA 92697 USA
(\email{chenlong@math.uci.edu})}
\and
Xuehai Huang
\thanks{School of Mathematics, 
Shanghai University of Finance and Economics, 
Shanghai 200433, China 
(\email{huang.xuehai@sufe.edu.cn})}
\and
Eun-Jae Park
\thanks{Department of Computational Science and Engineering, 
Yonsei University, Seoul 03722, Korea
(\email{ejpark@yonsei.ac.kr})}
\and
Ruishu Wang
\thanks{School of Mathematics,
Jilin University, Changchun,
Changchun, Jilin 130012, China
(\email{wangrs\_math@jlu.edu.cn})}}
\renewcommand{\div}{\operatorname{div}}
\newcommand{\dx}{\,{\rm d}x}
\newcommand{\dd}{\,{\rm d}}
\newcommand{\Oplus}{\ensuremath{\vcenter{\hbox{\scalebox{1.5}{$\oplus$}}}}}
\begin{document}

\maketitle

% REQUIRED
\begin{abstract}
This paper introduces a novel staggered discontinuous Galerkin (SDG) method tailored for solving elliptic equations on polytopal meshes. Our approach utilizes a primal-dual grid framework to ensure local conservation of fluxes, significantly improving stability and accuracy. The method is hybridizable and reduces the degrees of freedom compared to existing approaches. It also bridges connections to other numerical methods on polytopal meshes. Numerical experiments validate the method's optimal convergence rates and computational efficiency.
\end{abstract}

% REQUIRED
\begin{keywords}
staggered discontinuous Galerkin method, primal-dual grid, polytopal meshes, inf-sup condition, error estimate
\end{keywords}

% REQUIRED
\begin{MSCcodes}
65N30, 65N15, 65N12
\end{MSCcodes}
%65N15: Error bounds for boundary value problems involving PDEs
%65N30: Finite element, Rayleigh-Ritz and Galerkin methods for boundary value problems involving PDEs
%65N12: Stability and convergence of numerical methods for boundary value problems involving PDEs

\section{Introduction}

The development of efficient numerical methods for solving partial
differential equations (PDEs) on general meshes is an essential
aspect of modern scientific computing and has recently drawn considerable
attention. In this context, we mention several approaches
supporting polytopal elements and arbitrary approximation orders, such as discontinuous Galerkin (DG)
methods~\cite{BrezziManziniMariniPietraEtAl2000,Arnold2002,AntoniettiSISC2014}, hybridizable DG (HDG) methods~\cite{UnifHdgEllip2009,CockburnDongGuzman2008,Cockburn2023},
hybrid high-order (HHO) methods~\cite{DiPietroErnLemaire2014}, virtual
element methods (VEM)~\cite{BeiraodaVeigaBrezziCangianiManziniEtAl2013,BeiraodaVeigaBrezziMariniRusso2014}, and weak Galerkin (WG) methods
\cite{WangYe2013,WangYe2014}. More recently, the staggered DG (SDG) approach has attracted
attention due to its flexibility in handling complex
geometries, such as polygonal meshes, and its ability to
maintain conservation properties at the discrete level.

%\subsection{Background and Related Work}

The DG method, introduced in the late 20th century, has evolved as
a powerful tool for solving PDEs in various fields, including
fluid dynamics, electromagnetics, and structural
mechanics~\cite{Cockburn2000, Arnold2002}. However, the classical
DG methods often require stabilization techniques when applied to
unstructured or polytopal grids.
% and the performances usually hinge on the choices of the stabilization parameters and choosing a proper parameter could be tricky.
To address these challenges, several modifications and extensions
of DG methods have been proposed. On the other hand, mixed finite
element (MFE) methods
\cite{raviart2006mixed,BoffiBrezziFortin2013} have been very
successful in modeling fluid flow and transport in porous media, as they
provide accurate and locally mass conservative velocities and
robustness with respect to heterogeneous, anisotropic, and
discontinuous coefficients.  A computational drawback of MFE is the need to solve an algebraic system of saddle point
type. Several techniques, such as mass lumping or special
quadrature rules, have been developed in the literature to address
this
issue~\cite{ArbogastWheelerYotov1997,MFMFYotov2006,MFMFYotov2019}.
%these methods are  mostly of low orders or have restrictions on
%geometry for convergence analysis.

The staggered DG method, first introduced by Chung and
Engquist~\cite{ChungEngquist06,ChungWave09}, represents a
significant advancement in the family of DG methods, and can be
viewed as a generalized version of the Raviart-Thomas MFE method.
It uses a staggered arrangement of primal and dual grids to ensure
local conservation of fluxes and enhanced stability. Moreover, the
resulting mass matrix is block-diagonal without resorting to mass
lumping or special quadratures rules. The salient features of SDG
methods make them desirable for solving various PDEs arising in
science and engineering~
\cite{Cheung15,ChungKim13,ChungDu17,ChungLee11,ChungQiu17,Du18,KimChungStokes13}.

Subsequent developments, such as~
\cite{LinaParkdiffusion18,ZhaoParkShin19,LinaParkelasticity20},
have extended the horizon of SDG methods from triangular meshes to
general polygonal meshes, enabling their applications in more
general geometries. The primal advantage of SDG methods lies in
their flexibility with respect to mesh geometry. Polytopal meshes,
consisting of arbitrary polygonal/polyhedral elements, are
particularly useful in applications involving complex geometrical
domains and multiphysics interactions~\cite{LinaEricPark20,
ZhaoChungLam20,LinaKim20,ZhaoPark20}.
%and convergence rates of the SDG method {\color{red} when combined
%with advanced flux reconstruction techniques~\cite{Li2021}.}
In particular, the authors in~\cite{LinaKim20,LinaEricPark20},
proposed an SDG method for elliptic problems with interface
conditions, demonstrating how the staggered approach could handle
discontinuities across fractured porous media or material
interfaces. This method proved highly effective in addressing
issues related to mesh
irregularity~\cite{LinaKim20,KimZhaoPark2020}. Additionally, the
polygonal meshes outperform triangular meshes in terms of accuracy
\cite{LinaKim20,ArbogastWang2023}, which can be considered
justification for using polygonal elements. These contributions
have been instrumental in expanding the versatility of SDG
methods, making them applicable to a wider range of scientific and
engineering problems~\cite{ZhaoChungLam20,LinaChungParkIMA2023,
chung2024stabilization}.

% and was later extended to handle parabolic and hyperbolic equations~\cite{Park2012}.

%In more recent years, Park and his collaborators have focused on
%extending the SDG framework to support unstructured and polytopal
%meshes~\cite{Park2020}, which are increasingly used in simulations
%involving complex geometries. These contributions have been
%instrumental in expanding the versatility of SDG methods, making
%them applicable to a wider range of scientific and engineering
%problems.

%Specifically, the staggered grid formulation combined with
%advanced numerical fluxes has led to improved convergence rates
%and stability, even in cases involving highly irregular or
%non-convex domains.

%\subsection{Main Contributions}

In this work, we present a novel staggered discontinuous
Galerkin method specifically designed for solving the elliptic equation on polytopal meshes. Our contributions are as
follows:

%\smallskip

 $\bullet$ We develop a new discretization scheme for the elliptic equation that employs a primal-dual grid approach to efficiently handle complex polytopal meshes.  As opposed to the previous approach~\cite{ChungWave09,ZhaoPark2020JSC}, the primal scalar variable is continuous in the polytopal mesh, and the flux vector variable is continuous in
    a facet-based staggered dual mesh. More precisely, the primal variable is broken $H^1$-conforming on the primal mesh and the flux variable is staggered and broken
    $H(\div)$-conforming on the dual mesh.
    By construction, the scalar variable on each polytope takes a form of single polynomial, which reduces the number of degrees of
    freedom significantly compared to~\cite{ChungWave09,ZhaoPark2020JSC}, where a composite
    type polynomials are employed based on a simplicial submesh.

%\smallskip

$\bullet$  The new SDG  is hybridizable. By introducing Lagrange multipliers to impose the continuity of the normal fluxes on the primal faces, we can eliminate the flux element-wise and the saddle point system is thus reduced to a symmetric and positive definite system. It can be treated as a discretization of the primal formulation in terms of weak gradient. Therefore we name it primal SDG.

%    Moreover, as the mass matrix for the flux variable is block diagonal in the dual-cell based ordering, the saddle point problem can be reduced
%    to primal cell-centered type finite volume method after the Schur
%    complement. %If element-wise ordering is preferred,

%\smallskip
 $\bullet$ We provide some connections to other methods.
In particular, for the lowest order $k=0$ case
($P_0$-$P_1$ type) on triangular meshes, after the barycentric
refinement, the resulting matrix turns out to be identical to the
Crouzeix-Raviart non-conforming finite element matrix and only the
right hand side is different. The hybridized $P_0$-$P_1$ SDG can be thus thought as a generalization of linear non-conforming finite element to general polytopes. Another connection we show is that
the hybridized version of our primal SDG method can be viewed as a
stabilization-free weak Galerkin method or stabilization-free non-conforming virtual element method of the primal formulation of Poisson equation. In contrast, the original SDG is connected to HDG which belongs to discretization for the mixed formulation; see~\cite{ChungCockbuFu2014staggered}. 

%\smallskip
$\bullet$ The standard continuous Galerkin method lacks locally conservative flux approximations, which are crucial in applications like streamline construction and coupled flow-transport simulations. Inaccurate velocity fields can lead to numerical errors, such as spurious sources and sinks. Our method preserves the local conservation inherent to the SDG framework, ensuring mass conservation on each primal polytopal cell while improving stability and accuracy.

%\smallskip
$\bullet$ 
We provide numerical experiments to validate the performance of our method, showcasing its advantages in terms of convergence rates and computational efficiency.
%\end{itemize}

The remainder of this paper is structured as follows: In Section~\ref{sec:SDG}, we describe the mathematical formulation of the Poisson problem
and introduce the new finite element spaces appropriate for the
SDG discretization. Then, we prove the inf-sup stability along
with the unisolvence of the degrees of freedom. In Section~\ref{sec:hybrid}, we
discuss the hybridization procedure and derive optimal convergence
error estimates.  In Section~\ref{sec:connection}, some connections to other methods
are investigated. Finally in the last
section, we detail the implementation of our method.
Then, several numerical results are presented, which confirms our
theoretical findings. 

Throughout this paper, we use
``$\lesssim\cdots $" to mean that ``$\leq C\cdots$", where
$C$ is a generic positive constant independent of $h$,
which may take different values at different appearances. And $A\eqsim B$ means $A\lesssim B$ and $B\lesssim A$.

\section{Staggered Discontinuous Galerkin Methods}\label{sec:SDG}
We consider the mixed form of Poisson equation
\begin{subequations}\label{equ_mixed_poisson}
\begin{align}
\label{equ_mixed_poisson1}
\sigma-\nabla u &= \boldsymbol{0}\qquad\, \text{in} \,\Omega,
\\
\label{equ_mixed_poisson2}
\nabla\cdot \sigma&=-f \quad\;\, \text{in} \,\Omega,
\\
\label{equ_mixed_poisson3}
u&=0 \qquad\,\, \text{on} \,\partial \Omega,
\end{align}
\end{subequations}
where $\Omega\subset \mathbb{R}^d$ is a Lipschitz polyhedral domain. The scheme and analysis can be easily generalized to a general elliptic equation by replacing \eqref{equ_mixed_poisson1} with $\boldsymbol K^{-1}\sigma = \nabla u$. 

\subsection{Primal and dual grids}
Let $\mathcal{K}_h$ be a shape regular polytopal partition of domain $\Omega$, see Fig.\ref{partitions} (a) for a 2D example.
Assume each element $K\in\mathcal{K}_h$ is star-shaped with a uniformly bounded chunkiness parameter.
Let $\mathcal{F}_h^K$ be the set of $(d-1)$-dimensional faces of $\mathcal{K}_h$, and interior faces $\mathring{\mathcal{F}}_h^{K}=\mathcal{F}_h^K \setminus \partial \Omega$. We shall call $\mathcal{K}_h$ the primal grid, and element in $\mathcal F_h^K$ the primal face or the primal edge in 2D. 

\begin{figure}[htbp]
\label{partitions}
\subfigure[The primal grid $\mathcal{K}_h$.]{
\begin{minipage}[t]{0.315\linewidth}
\centering
\includegraphics*[width=3.75cm]{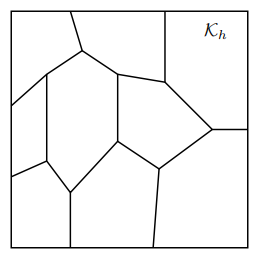} 
\quad
\end{minipage}}%%
\subfigure[The triangulation mesh $\mathcal{T}_h$.]
{\begin{minipage}[t]{0.315\linewidth}
\centering
\includegraphics*[width=3.75cm]{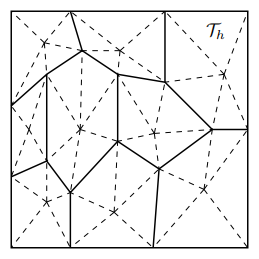} 
\end{minipage}}
\subfigure[The dual grid $\mathcal{K}_h^*$.]
{\begin{minipage}[t]{0.315\linewidth}
\centering
\includegraphics*[width=3.75cm]{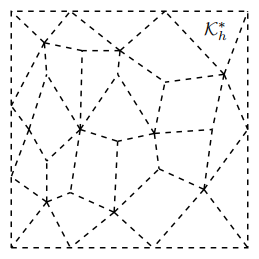} 
\end{minipage}}
\caption{The polygonal and triangle partitions.}
\end{figure}

We now introduce a dual grid. For $K\in\mathcal{K}_h$, let $\boldsymbol{x}_K$ be the center of the largest ball contained in $K$. 
Connecting $\boldsymbol{x}_K$ and the vertices of each $K\in\mathcal{K}_h$, we get $\mathcal{T}_h$, a shape regular partition of $\mathcal{K}_h$. In two dimensions, it is a triangulation; see Fig.~\ref{partitions}~(b). While $d\geq 3$, $\mathcal T_h$ may not consist of simplicies unless we assume each face $\mathcal{F}_h^K$ is a $(d-1)$-simplex. 
Restrict $\mathcal{T}_h$ to $K$ to get the partition of $K$, denoted by $\mathcal{T}_h(K)$.
Let $\mathcal{F}_h^T$ be a set of $(d-1)$-dimensional faces of $\mathcal{T}_h$ and $\mathring{\mathcal{F}}_h^{T}=\mathcal{F}_h^T \setminus \partial \Omega$.
Let $\lambda_{\boldsymbol{x}_K}$ be a piecewise linear function on $\mathcal T_h(K)$ satisfying $\lambda_{\boldsymbol{x}_K}(\boldsymbol{x}_K)=1$ and $\lambda_{\boldsymbol{x}_K}|_F=0$ for any $(d-1)$-dimensional face $F$. Such $\lambda_{\boldsymbol{x}_K}$ always exists. For a flat face $F$, we can decompose into simplicies and $\lambda_{\boldsymbol{x}_K}$ is the hat function at $\boldsymbol{x}_K$ of the triangulation $\mathcal T_h(K)$. 
Let $\nabla_h$ denote the elementwise gradient operator with respect to $\mathcal{K}_h$. %$\mathcal T_h$.

For $F\in\mathcal{F}_h^K$, let $\omega_F$ denote the union of all polytopes in $\mathcal{T}_h$ that share the face 
$F$, forming a domain with a diamond-like shape. For $F\in \mathcal F_h^K\cap \partial \Omega$, $\omega_F$ reduces to the element in $\mathcal T_h$ containing $F$. All these $\omega_F$ form a dual grid of $\Omega$ as 
\begin{equation*}
\mathcal{K}_h^*=\{\omega_F, F\in \mathcal{F}_h^K\}.
\end{equation*}
See Fig~\ref{partitions} (c). The $(d-1)$-dimensional faces of $\partial \omega_F$ will be called the dual faces or the dual edges in 2D. In Fig~\ref{partitions} (b), the primal edge is the one with the solid line and the dual edge is drawn by the dashed line. The external boundary faces are considered as the primal faces.
%\LC{Question: how about the boundary faces? By definition it belongs to both primal and dual faces.}

We assume that the meshes $\mathcal{K}_h$ and $\mathcal{T}_h$ satisfy \cite{WeiHuangLi2021SIAM}
\begin{itemize}
\item[A1] Each element $K\in\mathcal{K}_h$ and each face $F\in\mathcal{F}_h^{K}$ is star-shaped with a uniformly bounded chunkiness parameter. 
The chunkiness parameter $\gamma_K:=\frac{h_K}{\rho_K}$, where $h_K={\rm diam}(K)$ is the diameter of $K$, $\rho_K$ is the radius of the largest ball contained in $K$.
\item[A2] There exist real numbers $\eta_K>0$, $\eta_T>0$ such that for each $K\in\mathcal{K}_h$, $h_K\leq \eta_K h_F$ for all $F\in \mathcal{F}_h^{K}(K)$, where $h_F={\rm diam}(F)$ is the diameter of $F$, and
for each  $T\in\mathcal{T}_h$, $h_T\leq \eta_T h_F$ for all $F\in \mathcal{F}_h^{T}(T)$.
\end{itemize}

We will use standard notation of Sobolev spaces and denote the $L^2$-inner product of $\Omega$ is by $(\cdot,\cdot)$ and the $L^2$-inner product on $(d-1)$-dimensional domain by $\langle \cdot, \cdot \rangle$. 

For a star-shaped domain $D$, there holds the trace inequality
$$
\|v\|_{\partial D}^2\lesssim h_D^{-1}\|v\|^2_{0,D}+h_D|v|^2_{1,D},\quad\forall v\in H^1(D).
$$
Assume the mesh $\mathcal{K}_h$ satisfies condition (A1) and $K\in\mathcal{K}_h$. It holds for any nonnegative integers $\ell$ and $i$ that
$$
\|q\|_{0,K}\lesssim h_K^{-i}\|q\|_{-i,K},\quad q\in \mathbb P_{\ell}(K),
$$
then we have
$$
\|\nabla q\|_{0,K}\lesssim h_K^{-1}\|\nabla q\|_{-1,K}\lesssim h_K^{-1}\|q\|_{0,K},\quad q\in \mathbb P_{\ell}(K).
$$

\subsection{Finite element spaces}
For $k\geq 0$ and a domain $K$, let $\mathbb P_k(K)$ denote the space of all polynomials defined on $K$ of degree less than or equal to $k$, and set $\mathbb P_k(K;\mathbb R^d)=\mathbb P_k(K)\otimes\mathbb R^d$.
For $k\geq 0$, define
$$
U_h=\{u_h\in L^2(\Omega):\; u_h|_K\in \mathbb{P}_{k+1}(K), K\in \mathcal{K}_h\}=\prod_{K\in \mathcal{K}_h} \mathbb{P}_{k+1}(K).
$$
The function $u_h\in U_h$ is a polynomial inside $K$ but discontinuous across boundary of $K$, i.e. the interior primal faces in $\mathring{\mathcal{F}}_h^{K}$. To compensate this discontinuity, we require the flux to be continuous across the primal faces but discontinuous on the dual faces.
%Let $U_h^0$ denote a subset of $U_h$ characterized by vanishing boundary values.
%For a boundary element $K$, $\partial K \bigcap \partial \Omega\neq \emptyset$, $u_h\in U_h^0$ is a polynomial on $K$ and vanish on $\partial K\cap \partial \Omega$.
Let
$$
\Sigma_h=\{\sigma_h\in L^2(\Omega;\mathbb{R}^d):\; \sigma_h|_T \in \mathbb P_k(T;\mathbb R^d),\; T\in \mathcal{T}_h;\; [\sigma_h\cdot \boldsymbol{n}]_F=0,\; F\in \mathring{\mathcal{F}}_h^{K}\},
$$
where $\boldsymbol n$ is a unit normal direction of $F$, and $[\cdot]$ is the jump operator defined below so that the normal component of functions in $\Sigma_h$ is continuous on the boundary of $K$. 
%See Fig. \ref{fig:continuity} for an illustration of the lowest order $k = 0$.

\begin{figure}[htbp]
\label{fig:continuity}
\subfigure[$\sigma_h\cdot\boldsymbol{n}$ is continuous on the primal edge inside the dual element.]{
\begin{minipage}[t]{0.45\linewidth}
\centering
\includegraphics*[width=3.5cm]{dualmesh}
\quad
\end{minipage}}%%
\qquad
\subfigure[$u_h$ is continuous across the dual edges inside the primal element. ]
{\begin{minipage}[t]{0.45\linewidth}
\centering
\includegraphics*[width=3.25cm]{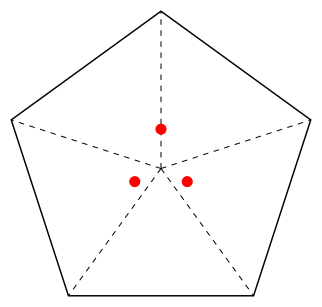}
\end{minipage}}
\caption{Different continuity of $\sigma_h$ and $u_h$.}
\end{figure}

For $F\in \mathring{\mathcal{F}}_h^{K}$, let $T_1, T_2 \in \omega_F$ satisfy $F=\partial T_1 \cap \partial T_2$.
Denote $u_h^i=u_h|_{T_i}$, $i=1,2$.
For $u_h\in U_h$, define the jump operator by
$$
[ u_h]_F=\left\{
\begin{array}{ll}
 u_h^1|_{F}- u_h^2|_{F},\; & F\in \mathring{\mathcal{F}}_h^{K},
\\
 u_h|_F,\; & F\in \partial\Omega.
\end{array}
\right.
$$
For $F\in \mathring{\mathcal{F}}_h^{T} $, let $T_1, T_2 \in \mathcal{T}_h$ satisfy $F=\partial T_1 \cap \partial T_2$.
Denote $\sigma_h^i=\sigma_h|_{T_i}$, $i=1,2$. Let $\boldsymbol{n}_i$ be the outward unit normal vector of $T_i$ on the boundary, $i=1,2$.
For $\sigma_h\in \Sigma_h$,
$$
[\sigma_h \cdot \boldsymbol{n}]_F=\left\{
\begin{array}{ll}
\sigma_h^1|_{F} \cdot \boldsymbol{n}_1+\sigma_h^2|_{F} \cdot \boldsymbol{n}_2, & F\in \mathring{\mathcal{F}}_h^{T},
\\
\sigma_h|_F \cdot \boldsymbol{n}_{\partial \Omega},& F\in \partial\Omega.
\end{array}
\right.
$$
The order of $T_1$ and $T_2$ are set by
\begin{equation*}
\sum_{T\in\mathcal{T}_h}\langle \sigma_h \cdot \boldsymbol{n}_{\partial T}, u_h \rangle_{\partial T}
=\sum_{F\in \mathring{\mathcal{F}}_h^{K}}\langle \sigma_h \cdot \boldsymbol{n}_{\partial T_1},[ u_h] \rangle_F
+\sum_{F\in \mathcal{F}_h^{T}\setminus \mathring{\mathcal{F}}_h^{K}}\langle [\sigma_h \cdot \boldsymbol{n}], u_h \rangle_F,
\end{equation*}
with $\sigma_h\in \Sigma_h$ and $u_h\in U_h$.

The following lemma is straightforward. 

\begin{lemma}
Define the following two functionals 
\begin{align*}
  &\|u_h\|_{1,h}=\left(\sum_{K\in\mathcal{K}_h}\|\nabla u_h\|_K^2
+\sum_{F\in \mathcal{F}_h^{K}}h_F^{-1}\|Q_b[u_h]\|_F^2\right)^{\frac12},
\\
  &\|\sigma_h\|_{0,h}=\left(\sum_{K\in\mathcal{K}_h}\|\sigma_h\|_K^2
+\sum_{F\in \mathcal{F}_h^{K}}h_F\|\sigma_h\cdot\boldsymbol{n}\|^2_F\right)^{\frac12},
\end{align*}
where $Q_b$ is the $L^2$-projection operator onto space $\mathbb P_{k}(F)$, on each edge $F\in \mathcal{F}_h^{K}$.
They are norms on space $U_h$ and $\Sigma_h$ respectively.
\end{lemma}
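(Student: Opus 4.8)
The plan is to verify the three defining properties of a norm for each of the two functionals. Non-negativity and absolute homogeneity are immediate from the definitions, since both functionals are the square roots of sums of squared $L^2$-(semi)norms of linear images of their arguments. The triangle inequality likewise follows by applying Minkowski's inequality to the $\ell^2$-aggregation of these underlying $L^2$-(semi)norms: writing each functional as the $\ell^2$-norm of the vector of components $\|\nabla u_h\|_K$, $h_F^{-1/2}\|Q_b[u_h]\|_F$ (respectively $\|\sigma_h\|_K$, $h_F^{1/2}\|\sigma_h\cdot\boldsymbol{n}\|_F$), one combines the $L^2$-triangle inequality componentwise with Minkowski in $\ell^2$. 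The only substantive point is therefore positive definiteness.

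For $\|\sigma_h\|_{0,h}$ definiteness is trivial: the summand $\sum_{K\in\mathcal{K}_h}\|\sigma_h\|_K^2$ equals $\|\sigma_h\|_{0,\Omega}^2$, so $\|\sigma_h\|_{0,h}=0$ already forces $\sigma_h=0$ almost everywhere on $\Omega$, and the interface contributions play no role.

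For $\|u_h\|_{1,h}$ I would argue as follows. Assume $\|u_h\|_{1,h}=0$. First, $\sum_{K}\|\nabla u_h\|_K^2=0$ gives $\nabla u_h=0$ on every $K$, and since each $K$ is star-shaped, hence connected, $u_h|_K$ is a constant $c_K$. Next, consider any interior primal face $F\in\mathring{\mathcal{F}}_h^{K}$ shared by $K_1,K_2$: the jump $[u_h]_F=c_{K_1}-c_{K_2}$ is a constant function on $F$. Because $k\geq 0$ we have $\mathbb{P}_0(F)\subseteq\mathbb{P}_k(F)$, so the $L^2$-projection $Q_b$ acts as the identity on constants; thus $Q_b[u_h]_F=[u_h]_F$, and $h_F^{-1}\|Q_b[u_h]\|_F^2=0$ yields $c_{K_1}=c_{K_2}$. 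On a boundary face $F\in\partial\Omega$ the same reasoning, now with $[u_h]_F=u_h|_F=c_K$, gives $c_K=0$ for every element $K$ touching $\partial\Omega$. Finally I would propagate these equalities by connectivity: since $\Omega$ is connected and $\mathcal{K}_h$ is a partition, any $K$ is joined to a boundary element by a chain of polytopes in which consecutive members share an interior primal face; the equalities across each such face, anchored by $c_K=0$ at the boundary end, force $c_K=0$, so $u_h\equiv 0$.

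The step I expect to require the most care is the interplay between the two jump conditions and the mesh connectivity. One must notice that $Q_b$ fixes constants, so that the elementwise-constant jumps are genuinely annihilated rather than merely having vanishing low-order moments, and then invoke the face-connectivity of the primal grid together with the homogeneous Dirichlet boundary to transport the value zero to every cell. Everything else reduces to the standard Minkowski and seminorm manipulations already indicated.
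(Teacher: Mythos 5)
Your proof is correct and is exactly the standard argument the paper has in mind when it declares this lemma ``straightforward'' and omits the proof: homogeneity and the triangle inequality via componentwise Minkowski, definiteness of $\|\cdot\|_{0,h}$ from the volume term alone, and definiteness of $\|\cdot\|_{1,h}$ by noting that $Q_b$ fixes the elementwise-constant jumps (since $k\ge 0$) and then propagating $c_K=0$ from the boundary faces through the face-connected primal grid. No gaps.
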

%\begin{proof}
%It is easy to see that these two operators are semi-norms.
%When  $\|u_h\|_{1,h}=0$ for $u_h\in U_h$, we have 
%$$
%\nabla u_h|_{K}=\boldsymbol{0},\,\, Q_b[u_h]|_F=0,\,\,\textrm{for} ~ K\in\mathcal{K}_h, \,F\in\mathcal{F}_h^{K}.
%$$
%Then we get $u_h=0$ on $\Omega$, which means that $\|\cdot\|_{1,h}$ is a norm on $U_h$.
%
%Let $\|\sigma_h\|_{0,h}=0$ for some $\sigma_h\in \Sigma_h$. Then we have
%$$
%\sigma_h|_T=0.
%$$
%That proves that $\|\cdot\|_{0,h}$ is a norm on $\Sigma_h$.
%\end{proof}

\begin{lemma}
We have the norm equivalences
\begin{align}
\label{normequiv1}
\|u_h\|_{1,h}^2&\eqsim\sum_{K\in\mathcal{K}_h}\|\nabla u_h\|_K^2
+\sum_{F\in \mathcal{F}_h^{K}}h_F^{-1}\|[u_h]\|_F^2\quad\forall~u_h\in U_h, \\
\label{normequiv2}
\|\sigma_h\|_{0,h}&\eqsim \|\sigma_h\| \quad\forall~\sigma_h\in \Sigma_h.
\end{align}
\end{lemma}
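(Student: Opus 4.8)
The plan is to treat the two equivalences separately. In each case one direction is immediate from a contraction or an inclusion of norms, and the whole content lies in the reverse direction, which I would reduce to a local polynomial trace estimate made uniform through the mesh regularity hypotheses (A1)--(A2).

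For \eqref{normequiv1}, since $Q_b$ is an $L^2(F)$-orthogonal projection it is a contraction, so $\|Q_b[u_h]\|_F\le\|[u_h]\|_F$ and $\|u_h\|_{1,h}^2$ is at once dominated by the right-hand side. For the reverse inequality I would use the Pythagorean splitting $\|[u_h]\|_F^2=\|Q_b[u_h]\|_F^2+\|(I-Q_b)[u_h]\|_F^2$, so that only the high-order remainder $\sum_F h_F^{-1}\|(I-Q_b)[u_h]\|_F^2$ needs to be absorbed into $\sum_K\|\nabla u_h\|_K^2$. Splitting $(I-Q_b)[u_h]|_F$ into its two one-sided traces $(I-Q_b)(u_h|_K|_F)$ reduces everything to the local estimate
$$
h_F^{-1}\|(I-Q_b)(p|_F)\|_F^2\lesssim\|\nabla p\|_{0,K}^2,\qquad p\in\mathbb P_{k+1}(K),\ F\in\mathcal F_h^{K}(K).
$$
Because $I-Q_b$ annihilates $\mathbb P_k(F)\supset\mathbb P_0(F)$, I may subtract the mean $\bar p$ of $p$ over $K$; then the contraction bound $\|(I-Q_b)(p|_F)\|_F\le\|p-\bar p\|_F$, the trace inequality on the star-shaped $K$, and the Poincar\'e estimate $\|p-\bar p\|_{0,K}\lesssim h_K\|\nabla p\|_{0,K}$ combine to give $\|(I-Q_b)(p|_F)\|_F^2\lesssim h_K\|\nabla p\|_{0,K}^2$. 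Dividing by $h_F$ and invoking $h_F\eqsim h_K$ from (A2) yields the claim, and summing over the boundedly many faces of each $K$ (a consequence of (A1)--(A2)) completes \eqref{normequiv1}.

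For \eqref{normequiv2}, I would first observe that $\sum_K\|\sigma_h\|_K^2=\|\sigma_h\|^2$ since $\mathcal T_h$ refines $\mathcal K_h$ into a partition of $\Omega$, so $\|\sigma_h\|\le\|\sigma_h\|_{0,h}$ is trivial and only the facet term must be controlled. On each $T\in\mathcal T_h$ the components of $\sigma_h$ lie in $\mathbb P_k(T)$, so the trace inequality together with the polynomial inverse inequality $\|\nabla\sigma_h\|_{0,T}\lesssim h_T^{-1}\|\sigma_h\|_{0,T}$ stated above gives the discrete trace bound $\|\sigma_h\cdot\boldsymbol n\|_F^2\le\|\sigma_h\|_{\partial T}^2\lesssim h_T^{-1}\|\sigma_h\|_T^2$; multiplying by $h_F\eqsim h_T$ (again by (A2)) and summing over faces produces $\sum_F h_F\|\sigma_h\cdot\boldsymbol n\|_F^2\lesssim\|\sigma_h\|^2$, that is $\|\sigma_h\|_{0,h}\lesssim\|\sigma_h\|$.

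I expect the main obstacle to be the local high-order trace estimate driving \eqref{normequiv1}: one must confirm that exactly the component of the jump invisible to $Q_b$ is controlled by the interior gradient, and that the powers of $h_F$ and $h_K$ balance so that the bound is scale invariant. Since the polytopes are not simplices, this cannot be pulled back to a single reference cell, so I would argue directly on the star-shaped $K$ through the trace and Poincar\'e inequalities, relying on the chunkiness bound in (A1) and the face-size comparability in (A2) to keep all hidden constants independent of $h$.
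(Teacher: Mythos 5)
Your proof is correct and follows essentially the same route as the paper's: the bound $\|Q_b[u_h]\|_F\le\|[u_h]\|_F$ gives one direction of \eqref{normequiv1}, the local projection-error/trace/Poincar\'e estimate you spell out is exactly the ``estimate of $Q_b$'' the paper invokes for the reverse direction, and \eqref{normequiv2} is obtained in both cases from the discrete trace bound $h_F\|\sigma_h\cdot\boldsymbol n\|_F^2\lesssim\|\sigma_h\|_T^2$ via the trace and inverse inequalities. The only (immaterial) difference is that you use the orthogonal splitting of $[u_h]$ where the paper uses the triangle inequality; your write-up simply supplies details the paper leaves implicit.
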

\begin{proof}
By the estimate of $Q_b$,
\begin{equation*}
\sum_{F\in \mathcal{F}_h^{K}}h_F^{-1}\|[u_h-Q_bu_h]\|_F^2\lesssim \sum_{K\in\mathcal K_h}\sum_{F\in\partial K}h_F^{-1}\|u_h-Q_bu_h\|_F^2\lesssim \sum_{K\in\mathcal{K}_h}\|\nabla u_h\|_K^2,
\end{equation*}
which means
\begin{align*}
\sum_{F\in \mathcal{F}_h^{K}}h_F^{-1}\|[u_h]\|_F^2\lesssim \sum_{F\in \mathcal{F}_h^{K}}h_F^{-1}\|Q_b[u_h]\|_F^2+\sum_{F\in \mathcal{F}_h^{K}}h_F^{-1}\|[u_h-Q_bu_h]\|_F^2\lesssim\|u_h\|_{1,h}^2.
\end{align*}
Hence, norm equivalence \eqref{normequiv1} is true.

Using the trace inequality and the inverse inequality we get
\begin{equation*}
  \sum_{F\in \mathcal{F}_h^{K}}h_F\|\sigma_h\cdot\boldsymbol{n}\|^2_F\lesssim \|\sigma_h\|.
\end{equation*}
This yields the norm equivalence \eqref{normequiv2}.
\end{proof}

Compared with the existing SDG methods, the primal and dual grids given in this paper switched, providing the advantage of $u$ being a piecewise polynomial on the polytopal mesh.
See Fig. \ref{priamdualk0} for $k=0$ and Fig. \ref{priamdualk1} for $k=1$ for 2-D examples. 
%For $\sigma_h\in \Sigma_h$, across the elements $K$, the normal components are continuous on the boundary, see Fig. \ref{priamdualk0} (a) for $k=0$ and Fig. \ref{priamdualk1} (a) for $k=1$.

\begin{figure}[htbp]
\label{priamdualk0}
\subfigure[Space $\Sigma_h$ with $k=0$.]{
\begin{minipage}[t]{0.45\linewidth}
\centering
\includegraphics*[width=3.25cm]{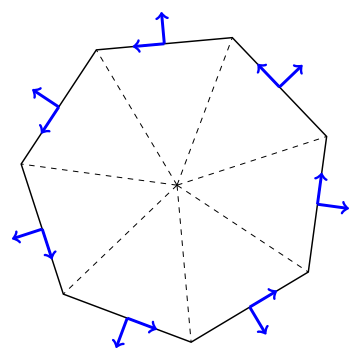}
\end{minipage}}%%
\subfigure[Space $U_h$ with $k = 0$.]
{\begin{minipage}[t]{0.45\linewidth}
\centering
\includegraphics*[width=3.03cm]{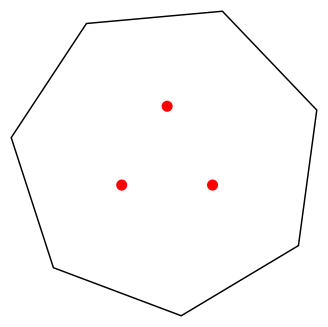}
\end{minipage}}
\caption{Degrees of freedom for $k=0$ on polygons.}
\end{figure}

%\begin{figure}[!h]
%  \centering
%  \begin{tabular}{cc}
%    \resizebox{1.80in}{1.70in}{\includegraphics{primaldualsigma}}  &\qquad
%    \resizebox{1.40in}{1.60in}{\includegraphics{primaldualu}}  \\ [-0.02in]
%    (a)  &  (b)
%  \end{tabular}
%  \vskip -0.10in
%  \caption{ The primal and dual grids for lowest order $k=0$. }
%\label{priamdualk0}
%\end{figure}

\begin{figure}[htbp]
\label{priamdualk1}
\subfigure[Space $\Sigma_h$ with $k=1$.]{
\begin{minipage}[t]{0.45\linewidth}
\centering
\includegraphics*[width=3.35cm]{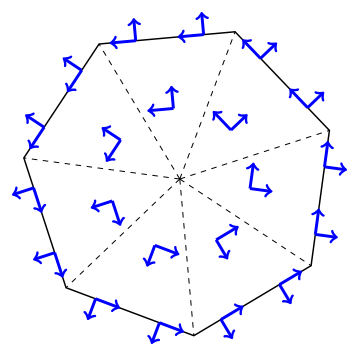}
\end{minipage}}%%
\subfigure[Space $U_h$ with $k = 1$.]
{\begin{minipage}[t]{0.45\linewidth}
\centering
\includegraphics*[width=3.1cm]{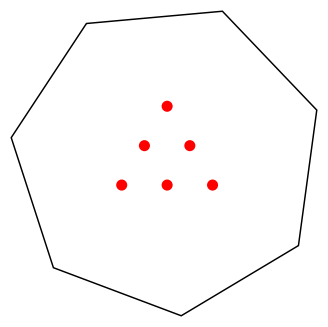}
\end{minipage}}
\caption{Degrees of freedom for $k=1$ on polygons.}
\end{figure}

%\begin{figure}[!h]
%  \centering
%  \begin{tabular}{cc}
%    \resizebox{1.80in}{1.70in}{\includegraphics{primaldualsigma1}}  &\qquad
%    \resizebox{1.40in}{1.50in}{\includegraphics{primaldualu1}}  \\ [-0.02in]
%    (a)  &  (b)
%  \end{tabular}
%  \vskip -0.10in
%  \caption{ The primal and dual grids for $k=1$. }
%\label{priamdualk1}
%\end{figure}

%\begin{figure}[!h]
%  \centering
%  \begin{tabular}{cc}
%    \resizebox{1.70in}{2.0in}{\includegraphics{sigmacrosselement0}}  &\qquad
%    \resizebox{1.70in}{2.0in}{\includegraphics{sigmacrosselement1}}  \\ [-0.02in]
%    (a)  &  (b)
%  \end{tabular}
%  \vskip -0.10in
%  \caption{ $\sigma_h\in \Sigma_h$, DoFs across the elements $K$ }
%\label{sigmacross}
%\end{figure}

\subsection{Staggered discontinuous Galerkin method}
The variational problem is given by: Find $\sigma_h\in \Sigma_h$ and $u_h\in U_h$ such that
\begin{subequations}\label{equ_variational_proble}
\begin{align}
\label{equ_variational_proble1}
a_h(\sigma_h,\tau_h)+b_h(\tau_h,u_h)&=0,\qquad\quad\;\forall~\tau_h\in \Sigma_h,
\\
\label{equ_variational_proble2}
b_h(\sigma_h,v_h)&=-(f,v_h),\;\forall~v_h\in U_h,
\end{align}
\end{subequations}
where
\begin{align*}
a_h(\sigma_h,\tau_h)&:=(\sigma_h,\tau_h), \\
b_h(\sigma_h,u_h)&:=-\sum_{K\in\mathcal{K}_h}(\sigma_h,\nabla u_h)_K+\sum_{F\in\mathcal{F}_h^{K}}\langle \sigma_h\cdot\boldsymbol{n},Q_b[u_h]\rangle_F.
\end{align*}
The Dirichlet boundary condition $u|_{\partial \Omega} = 0$ is imposed weakly  in the second term of $b_h(\sigma_h,u_h)$, i.e. $\langle \sigma_h\cdot\boldsymbol{n}, u_h\rangle_{\partial\Omega}$.
The bilinear form $b_h: \; \Sigma_h\times U_h\rightarrow \mathbb{R}$ is given based on the mixed form of Poisson equation (\ref{equ_mixed_poisson}) by testing $v_h\in U_h$ and $\tau_h\in \Sigma_h$.
The dual discretization of $b_h(\sigma_h,u_h)$ would be $(\div \sigma_h, u_h)$ but $\sigma_h$ is discontinuous inside $K$. As $u_h\in H^1(K)$, we switch the differentiation and apply the gradient to $u_h$. On the other side, the primal discretization of $b_h(\sigma_h,u_h)$ would be $-(\sigma_h, \nabla u_h)$.
As $u_h$ is discontinuous across faces in $\mathcal F_h^K$, the term involving jump $[u_h]$, which is a discrete gradient for discontinuous function, is included into $b_h(\sigma_h,u_h)$.

Using the Cauchy-Schwarz inequality, we get the continuity. 
\begin{lemma}\label{lemma_conti_ab}
The bilinear forms $a_h(\cdot,\cdot)$ and $b_h(\cdot,\cdot)$ are continuous, i.e.
\begin{align*}
a_h(\sigma_h,\tau_h) &\leq \|\sigma_h\|_{0,h}\|\tau_h\|_{0,h}, \quad\quad\forall~\sigma_h,\tau_h\in \Sigma_h,
\\
b_h(\sigma_h,u_h) &\leq \|\sigma_h\|_{0,h}\|u_h\|_{1,h}, \quad\forall~\sigma_h\in \Sigma_h, u_h\in U_h.
\end{align*}
% where $C$ is a positive constant independent of $h$.
\end{lemma}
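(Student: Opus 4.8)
The plan is to bound both bilinear forms by repeated application of the Cauchy--Schwarz inequality: first on individual elements and faces, then summing across the mesh with the discrete Cauchy--Schwarz inequality, and finally combining the volume and face contributions into the product of the two norms. The whole argument is elementary; the only thing that needs care is the bookkeeping of the mesh-size weights.

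For the continuity of $a_h$, I would simply write $a_h(\sigma_h,\tau_h) = (\sigma_h,\tau_h) = \sum_{K\in\mathcal{K}_h}(\sigma_h,\tau_h)_K$, apply Cauchy--Schwarz elementwise, and then use the discrete Cauchy--Schwarz inequality across elements to get $a_h(\sigma_h,\tau_h) \le (\sum_K\|\sigma_h\|_K^2)^{1/2}(\sum_K\|\tau_h\|_K^2)^{1/2}$. Since the face terms entering $\|\cdot\|_{0,h}$ are nonnegative, each of these two factors is bounded by the corresponding $\|\cdot\|_{0,h}$ norm, which yields the claimed estimate.

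For $b_h$, I would treat its two groups of terms separately. The volume part $-\sum_{K}(\sigma_h,\nabla u_h)_K$ is handled exactly as above, producing $(\sum_K\|\sigma_h\|_K^2)^{1/2}(\sum_K\|\nabla u_h\|_K^2)^{1/2}$. For the face part $\sum_{F}\langle\sigma_h\cdot\boldsymbol{n},Q_b[u_h]\rangle_F$, the key step is to insert compensating powers of $h_F$: on each face $\langle\sigma_h\cdot\boldsymbol{n},Q_b[u_h]\rangle_F \le (h_F^{1/2}\|\sigma_h\cdot\boldsymbol{n}\|_F)(h_F^{-1/2}\|Q_b[u_h]\|_F)$, and summing over $F$ with the discrete Cauchy--Schwarz inequality gives $(\sum_F h_F\|\sigma_h\cdot\boldsymbol{n}\|_F^2)^{1/2}(\sum_F h_F^{-1}\|Q_b[u_h]\|_F^2)^{1/2}$. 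The final step combines the two estimates: writing the right-hand side as $a_1 b_1 + a_2 b_2$, with $a_1,a_2$ collecting the $\sigma_h$ factors and $b_1,b_2$ the $u_h$ factors, a two-term Cauchy--Schwarz gives $a_1b_1 + a_2 b_2 \le (a_1^2+a_2^2)^{1/2}(b_1^2+b_2^2)^{1/2} = \|\sigma_h\|_{0,h}\|u_h\|_{1,h}$.

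There is no genuine obstacle here. The only point requiring attention is the weighting: the factor $h_F$ must be split symmetrically as $h_F = h_F^{1/2}\cdot h_F^{1/2}$ so that the two resulting face sums match exactly the $h_F$ and $h_F^{-1}$ weights appearing in the definitions of $\|\cdot\|_{0,h}$ and $\|\cdot\|_{1,h}$, and the volume and face groups must then be recombined through a single two-term Cauchy--Schwarz to assemble the full norms rather than leaving them as a sum of two products.
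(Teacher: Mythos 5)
Your proposal is correct and follows exactly the route the paper takes: the paper's proof is the single remark ``Using the Cauchy--Schwarz inequality, we get the continuity,'' and your argument is precisely that inequality carried out in detail, with the $h_F^{1/2}\cdot h_F^{1/2}$ splitting of the face weights and a final two-term Cauchy--Schwarz to reassemble $\|\sigma_h\|_{0,h}\|u_h\|_{1,h}$. Nothing is missing.
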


%\begin{proof}
%Using the Cauchy-Schwarz inequality, we get
%\begin{equation*}
%a_h(\sigma_h,\tau_h)
%=(\sigma_h,\tau_h)
%\leq \|\sigma_h\|_{0,h}\|\tau_h\|_{0,h}.
%\end{equation*}
%From the Cauchy-Schwarz inequality, we get
%\begin{align*}
%& b_h(\sigma_h,u_h)=\sum_{K\in\mathcal{K}_h}-(\sigma_h,\nabla u_h)_K+\sum_{F\in\mathcal{F}_h^{K}}\langle \sigma_h \cdot \boldsymbol{n},Q_b[u_h]\rangle_F
%\\
%&\leq \|\sigma_h\|\left(\sum_{T\in\mathcal{T}_h}\|\nabla u_h\|_T^2\right)^{\frac12}
%+\left(\sum_{F\in\mathcal{F}_h^{K}}h_F\|\sigma_h \cdot \boldsymbol{n}\|_F^2\right)^{\frac12}\left(\sum_{F\in\mathcal{F}_h^{K}}h_F^{-1}\|Q_b[u_h]\|_F^2\right)^{\frac12}
%\\
%&\leq \|\sigma_h\|_{0,h}\|u_h\|_{1,h}.
%\end{align*}
%%We finished the proof by the shape regularity of the meshes.
%\end{proof}

Using the norm equivalence \eqref{normequiv2}, we get the coercivity. 
\begin{lemma}
The bilinear form $a_h(\cdot,\cdot)$ is coercive, i.e. there is a positive constant $\alpha$ independent of $h$ such that
$$
a_h(\tau_h,\tau_h)\geq \alpha \|\tau_h\|_{0,h}^2,\;\forall~\tau_h\in \Sigma_h.
$$
\end{lemma}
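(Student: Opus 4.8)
The plan is to observe that the diagonal of $a_h$ is exactly the full $L^2$-norm squared, so coercivity in the $\|\cdot\|_{0,h}$-norm reduces immediately to the already-proven norm equivalence \eqref{normequiv2}.

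First I would unfold the definition $a_h(\sigma_h,\tau_h):=(\sigma_h,\tau_h)$ along the diagonal to get
$$
a_h(\tau_h,\tau_h)=(\tau_h,\tau_h)=\|\tau_h\|^2,
$$
where $\|\cdot\|$ is the $L^2(\Omega;\mathbb{R}^d)$-norm. This is the only algebraic step required; there is no jump or face contribution in $a_h$ itself, so the entire face part of the $\|\cdot\|_{0,h}$-norm must be controlled by the bulk term, which is precisely what \eqref{normequiv2} provides.

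Next I would invoke \eqref{normequiv2}, which asserts $\|\sigma_h\|_{0,h}\eqsim\|\sigma_h\|$ for every $\sigma_h\in\Sigma_h$. Only one direction of this equivalence is needed here, namely $\|\tau_h\|_{0,h}\lesssim\|\tau_h\|$; this yields a constant, which I name $\alpha$, satisfying $\alpha\,\|\tau_h\|_{0,h}^2\le\|\tau_h\|^2$. Chaining this with the previous display gives
$$
a_h(\tau_h,\tau_h)=\|\tau_h\|^2\ge\alpha\,\|\tau_h\|_{0,h}^2\qquad\forall~\tau_h\in\Sigma_h,
$$
which is the asserted coercivity.

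There is essentially no genuine obstacle in this lemma, since its content is entirely absorbed into the proof of \eqref{normequiv2}. The only point deserving care is the $h$-independence of $\alpha$: this follows because the hidden constant in \eqref{normequiv2} arises solely from the trace inequality and the inverse inequality recorded earlier, both of which hold with constants depending only on the shape-regularity assumptions (A1)--(A2) and not on $h$. Hence $\alpha$ is independent of the mesh size, as required.
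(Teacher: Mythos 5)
Your proposal is correct and follows exactly the paper's route: the paper proves this lemma by the single observation that $a_h(\tau_h,\tau_h)=\|\tau_h\|^2$ together with the direction $\|\tau_h\|_{0,h}\lesssim\|\tau_h\|$ of the norm equivalence \eqref{normequiv2}. Your additional remark on the $h$-independence of $\alpha$ via the trace and inverse inequalities under (A1)--(A2) is accurate and consistent with how the paper establishes \eqref{normequiv2}.
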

%\begin{proof}
%
%% Using the trace inequality and the inverse inequality we get
%% $$
%% \sum_{F\in \mathcal{F}_h^K}h_F\|\tau_h\cdot\boldsymbol{n}\|_F^2\leq C_1\sum_{T\in\mathcal{T}_h} \|\tau_h\|_T^2+h^2_T\|\nabla\tau_h\|_T^2\leq C_2\sum_{T\in\mathcal{T}_h} \|\tau_h\|_T^2.
%% $$
%% We have the conclusion
%% $$
%% \|\tau_h\|^2_{0,h}=\sum_{T\in\mathcal{T}_h}\|\tau_h\|_T^2+\sum_{F\in\mathcal{F}_h^K}h_F\|\tau_h\cdot\boldsymbol{n}\|_F^2\leq C^{-1}a_h(\tau_h,\tau_h).
%% $$
%\end{proof}

The difficulty is to prove the following inf-sup condition. We first present the result and use it to establish the well-posedness of the problem and then discuss the proof in the next subsection. 
\begin{lemma}\label{lm:infsup}
We have the inf-sup condition: there exists a positive constant $\beta$ independent of $h$ such that
$$
\inf_{u_h\in U_h}\sup_{\sigma_h\in\Sigma_h}\frac{b_h(\sigma_h,u_h)}{\|\sigma_h\|_{0,h}\|u_h\|_{1,h}}\geq \beta.
$$
\end{lemma}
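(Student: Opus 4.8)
The plan is to verify the inf-sup condition by the standard Fortin argument: for an arbitrary $u_h\in U_h$ I will construct a single test flux $\sigma_h\in\Sigma_h$ realizing the supremum up to a constant, namely one satisfying $b_h(\sigma_h,u_h)\gtrsim\|u_h\|_{1,h}^2$ together with the stability bound $\|\sigma_h\|_{0,h}\lesssim\|u_h\|_{1,h}$. Once such a $\sigma_h$ is available, inserting it into the quotient gives $b_h(\sigma_h,u_h)/(\|\sigma_h\|_{0,h}\|u_h\|_{1,h})\gtrsim 1$, which is the claimed bound with an $h$-independent $\beta$.

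The construction is dictated by the two contributions $\|u_h\|_{1,h}^2=\sum_{K\in\mathcal{K}_h}\|\nabla u_h\|_K^2+\sum_{F\in\mathcal{F}_h^{K}}h_F^{-1}\|Q_b[u_h]\|_F^2$ and by the structure of $b_h$. I will prescribe $\sigma_h\in\Sigma_h$ through its degrees of freedom so that, on the one hand, its normal component on every primal face equals $\sigma_h\cdot\boldsymbol{n}|_F=h_F^{-1}Q_b[u_h]|_F$ (a single $\mathbb P_k(F)$ function, hence automatically compatible with the normal continuity defining $\Sigma_h$), and, on the other hand, its gradient moments on each polytope match $-\nabla u_h$, i.e. $(\sigma_h,\nabla q)_K=-(\nabla u_h,\nabla q)_K$ for all $q\in\mathbb P_{k+1}(K)$, with all remaining interior degrees of freedom set to zero. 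With this choice the two pieces of $b_h$ decouple exactly: taking $q=u_h$ in the gradient moments gives $-(\sigma_h,\nabla u_h)_K=\|\nabla u_h\|_K^2$, while $\langle\sigma_h\cdot\boldsymbol{n},Q_b[u_h]\rangle_F=h_F^{-1}\|Q_b[u_h]\|_F^2$ since both $\sigma_h\cdot\boldsymbol{n}|_F$ and $Q_b[u_h]$ lie in $\mathbb P_k(F)$. Summing over $K$ and $F$ yields $b_h(\sigma_h,u_h)=\|u_h\|_{1,h}^2$ with no cross terms to absorb, which is the feature that makes this particular choice preferable to a two-part gradient-plus-jump construction.

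It then remains to bound $\|\sigma_h\|_{0,h}$. Because the prescribed data are local to each $K$ (the shared normal-trace values live on primal faces and couple no interior unknowns across elements), a scaling argument to a reference configuration, legitimate under the shape-regularity and mesh conditions (A1)--(A2), bounds the minimal-norm reconstruction by $\|\sigma_h\|_K^2\lesssim\|\nabla u_h\|_K^2+\sum_{F\subset\partial K}h_F^{-1}\|Q_b[u_h]\|_F^2$; here the normal-trace contribution is handled by the trace and inverse inequalities recorded above, using $\|\sigma_h\cdot\boldsymbol{n}\|_F^2=h_F^{-2}\|Q_b[u_h]\|_F^2$. Summing over $K$ and invoking the norm equivalence \eqref{normequiv2} gives $\|\sigma_h\|_{0,h}\eqsim\|\sigma_h\|\lesssim\|u_h\|_{1,h}$, closing the argument.

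The heart of the matter, and the reason the lemma is nontrivial, is the well-posedness of this reconstruction: I must show that the normal-trace functionals on the primal faces together with the gradient moments $q\mapsto(\cdot,\nabla q)_K$ are linearly independent on $\Sigma_h|_K$, so that the prescribed values are attainable, and that the resulting dual basis is bounded uniformly in $h$. This is precisely the unisolvence of the degrees of freedom. Independence can be checked by first imposing the normal-trace constraints and then testing the gradient moments against the surviving tangential freedom; already in the lowest-order case $k=0$ it reduces to the statement that the edge tangents of a genuine polytope span $\mathbb R^d$, which fails only for degenerate cells. For general $k$ the same conclusion follows from a reference-element and compactness argument, and the uniformity of the constants is exactly where the chunkiness and face-diameter bounds in (A1)--(A2) are consumed. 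I expect this local unisolvence-with-stability to be the main obstacle, the remaining estimates being routine consequences of the trace and inverse inequalities together with the norm equivalences.
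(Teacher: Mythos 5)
Your proposal is correct and follows essentially the same route as the paper: the authors also construct $\sigma_h$ by prescribing $\langle \sigma_h\cdot\boldsymbol{n},w\rangle_F=h_F^{-1}\langle Q_b[u_h],w\rangle_F$ on primal faces and $(\sigma_h,\tau)_K=-(\nabla u_h,\tau)_K$ for $\tau\in\mathbb P_k(K;\mathbb R^d)$ (of which your gradient moments are the relevant part), setting the remaining bubble DoFs of \eqref{equ_dofs_dd_polytope_new2} to zero, which yields $b_h(\sigma_h,u_h)=\|u_h\|_{1,h}^2$ exactly and $\|\sigma_h\|_{0,h}\lesssim\|u_h\|_{1,h}$ by scaling. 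The local unisolvence you flag as the crux is precisely what the paper establishes beforehand via the bubble-space decomposition and the choice of $d$ spanning tangent vectors in \eqref{equ_assumption_ddpolytope}.
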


\begin{theorem}
The variational problem \eqref{equ_variational_proble} is well-posed and stable. There is a unique solution pair $\sigma_h\in \Sigma_h$, $u_h\in U_h$, satisfying
\begin{equation}
\|\sigma_h\|_{0,h}+\|u_h\|_{1,h}\lesssim \|f\|.
\end{equation}
\end{theorem}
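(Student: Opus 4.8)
The plan is to view \eqref{equ_variational_proble} as a saddle-point system on the product space $\Sigma_h\times U_h$, normed by $\|\sigma_h\|_{0,h}+\|u_h\|_{1,h}$, and to apply the classical Babu\v{s}ka--Brezzi theory for mixed problems. The three structural hypotheses of that theory are already available: continuity of $a_h$ and $b_h$ is given by Lemma~\ref{lemma_conti_ab}; the inf-sup condition on $b_h$ is exactly Lemma~\ref{lm:infsup}; and the coercivity of $a_h$ holds on the full space $\Sigma_h$, and hence a fortiori on the kernel $Z_h:=\{\tau_h\in\Sigma_h:\ b_h(\tau_h,v_h)=0\ \text{for all}\ v_h\in U_h\}$, with an $h$-independent constant $\alpha$. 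Because $a_h$ is coercive on all of $\Sigma_h$ rather than only on $Z_h$, the kernel-coercivity condition is automatic and no separate verification is needed.

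With these three ingredients in hand, the Brezzi theorem yields existence and uniqueness of the pair $(\sigma_h,u_h)$ together with the a priori bound
$$
\|\sigma_h\|_{0,h}+\|u_h\|_{1,h}\lesssim \sup_{0\neq v_h\in U_h}\frac{|(f,v_h)|}{\|v_h\|_{1,h}},
$$
where the hidden constant depends only on the continuity constants, on $\alpha$, and on the inf-sup constant $\beta$, all of which are independent of $h$. The right-hand side of \eqref{equ_variational_proble} is the single linear functional $v_h\mapsto -(f,v_h)$, so it remains only to bound its dual norm with respect to $\|\cdot\|_{1,h}$ by $\|f\|$.

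For that last step I would invoke the Cauchy--Schwarz inequality, $|(f,v_h)|\le \|f\|\,\|v_h\|_{0}$, followed by a discrete Poincar\'e--Friedrichs inequality $\|v_h\|_{0}\lesssim \|v_h\|_{1,h}$ valid for broken polynomial spaces whose jumps across all faces, including the boundary faces in $\partial\Omega$, are penalized in $\|\cdot\|_{1,h}$; this is standard for such broken $H^1$-type norms and relies crucially on the boundary term being built into the jump $[\cdot]$. Combining these gives the claimed estimate $\|\sigma_h\|_{0,h}+\|u_h\|_{1,h}\lesssim\|f\|$. In this argument the genuinely hard analytic content, namely the inf-sup stability of $b_h$, has already been isolated in Lemma~\ref{lm:infsup}; the only remaining obstacle is the discrete Poincar\'e inequality controlling the load functional, which I expect to be routine precisely because the boundary jump is included in the norm.
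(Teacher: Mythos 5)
Your proposal is correct and follows essentially the same route as the paper: the paper's proof is precisely the one-line invocation of the Babu\v{s}ka--Brezzi theory using the continuity of $a_h$ and $b_h$, the coercivity of $a_h$ on all of $\Sigma_h$ (hence on the kernel), and the inf-sup condition of Lemma~\ref{lm:infsup}. The only difference is that you spell out the final bound of the load functional via Cauchy--Schwarz and a discrete Poincar\'e inequality in the $\|\cdot\|_{1,h}$ norm, a step the paper leaves implicit.
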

\begin{proof}
The conclusion is given by the continuity of bilinear forms $a_h(\cdot,\cdot)$ and $b_h(\cdot,\cdot)$, the inf-sup condition of $b_h(\cdot,\cdot)$  and the coercivity of $a_h(\cdot,\cdot)$. Then by the Babu\v ska-Brezzi condition~\cite{BoffiBrezziFortin2013}, we obtain the stability result. 
\end{proof}

\begin{remark}\label{rm:conservation}\rm 
By taking $v_h =\chi_{K}$ in \eqref{equ_variational_proble2}, we obtain the local conservation
$$
\int_{\partial K} \sigma_h \cdot \boldsymbol n \dd S= - \int_K f \dd x. 
$$
The standard continuous Galerkin method lacks locally conservative flux approximations, which are crucial in applications like streamline construction and coupled flow-transport simulations. 
%Inaccurate velocity fields can lead to numerical errors, such as spurious sources and sinks. 
\end{remark}

\subsection{Inf-sup condition}
This subsection is devoted to the proof of the inf-sup condition. The key is a set of degrees of freedom (DoFs) for the space $\Sigma_h$. We begin with the following DoF for $\mathbb{P}_k(T)$.

\begin{lemma}\label{lem:unisolPk}
Let $T\in\mathcal{T}_h(K)$ and $F=\partial T\cap\partial K$. For integer $k\geq0$, the degrees of freedom 
\begin{subequations}\label{Pkdof}
\begin{align}
\label{Pkdof1}
\int_Fv\,q\dd S, & \quad q\in \mathbb{P}_k(F),\\
\label{Pkdof2}
\int_Tv\,q\dx, & \quad q\in \mathbb{P}_{k-1}(T)
\end{align}
\end{subequations}
are unisolvent for space $\mathbb{P}_k(T)$.
\end{lemma}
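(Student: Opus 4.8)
The plan is to establish unisolvence by the standard two-step argument: first check that the number of functionals in \eqref{Pkdof} equals $\dim\mathbb{P}_k(T)$, and then prove injectivity, i.e.\ that the only $v\in\mathbb{P}_k(T)$ annihilated by all the functionals is $v=0$. For the counting step, since $T$ is a $d$-simplex and $F$ a $(d-1)$-dimensional simplicial face, we have $\dim\mathbb{P}_k(T)=\binom{k+d}{d}$, $\dim\mathbb{P}_k(F)=\binom{k+d-1}{d-1}$ and $\dim\mathbb{P}_{k-1}(T)=\binom{k+d-1}{d}$. Pascal's identity $\binom{k+d}{d}=\binom{k+d-1}{d}+\binom{k+d-1}{d-1}$ then shows that the total number of functionals in \eqref{Pkdof1}--\eqref{Pkdof2} matches the dimension of the space, so it suffices to verify injectivity.

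For injectivity, suppose $v\in\mathbb{P}_k(T)$ makes every functional in \eqref{Pkdof} vanish. Since $v|_F\in\mathbb{P}_k(F)$, I would take $q=v|_F$ in \eqref{Pkdof1} to get $\int_F v^2\dd S=0$, hence $v|_F=0$. Thus $v$ vanishes on the $(d-1)$-simplex $F$, which—because $\mathcal{T}_h$ is simplicial—spans a hyperplane. Writing $\lambda:=\lambda_{\boldsymbol{x}_K}|_T$ for the barycentric coordinate of $T$ associated with the vertex $\boldsymbol{x}_K$, this affine function vanishes precisely on that hyperplane and is strictly positive in the interior of $T$.

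Because $v$ is a polynomial vanishing on the entire hyperplane $\{\lambda=0\}$, it is divisible by the degree-one factor $\lambda$; hence $v=\lambda\,w$ for some $w\in\mathbb{P}_{k-1}(T)$. I would then choose $q=w$ in \eqref{Pkdof2}, giving $\int_T \lambda\, w^2\dx=0$. Since $\lambda>0$ in the interior of $T$ and $w^2\ge0$, this forces $w=0$ on $T$, so that $v=\lambda w=0$, completing the injectivity argument and hence the proof. (In the degenerate case $k=0$ the interior functionals \eqref{Pkdof2} are absent and the argument collapses to the first step, which is consistent with $\dim\mathbb{P}_0(T)=1$.)

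I expect the only delicate point to be the factorization $v=\lambda w$: it relies on $F$ being a genuine flat $(d-1)$-simplex, so that $v|_F=0$ promotes to vanishing on the whole hyperplane $\{\lambda=0\}$ and divisibility by the single affine factor $\lambda$ is legitimate. This is automatic when $d=2$ and is exactly what the simplicial structure of $\mathcal{T}_h$ (assumed for $d\ge3$) guarantees. Everything else—the dimension count via Pascal's identity and the two positivity/definiteness arguments—is routine.
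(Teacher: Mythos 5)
Your proof is correct and follows essentially the same route as the paper's: a dimension count, then vanishing of \eqref{Pkdof1} gives $v|_F=0$, factor out $\lambda_{\boldsymbol{x}_K}$ to write $v=\lambda_{\boldsymbol{x}_K}w$ with $w\in\mathbb{P}_{k-1}(T)$, and use \eqref{Pkdof2} with the positivity of $\lambda_{\boldsymbol{x}_K}$ in the interior to conclude $w=0$. The only cosmetic difference is that the paper phrases the factorization via the Bernstein basis while you argue by divisibility of $v$ by the affine factor vanishing on the hyperplane of $F$; these are the same idea, and your explicit final step (taking $q=w$) merely spells out what the paper leaves implicit.
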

\begin{proof}
Since 
\begin{equation*}
\dim \mathbb{P}_k(F) +\dim \mathbb{P}_{k-1}(T)=\dim\mathbb{P}_k(T),
\end{equation*}
the number of DoFs \eqref{Pkdof} equals the dimension of space $\mathbb{P}_k(T)$.
Assume $v\in\mathbb{P}_k(T)$ satisfying all the DoFs \eqref{Pkdof} vanishes. The vanishing DoF \eqref{Pkdof1} means $v|_F=0$. 
It is evident that $v\in\mathbb{P}_k(T)$ can be represented by the Bernstein basis functions, subject to the condition that $v|_F=0$, implying that $v=\lambda_{\boldsymbol{x}_K}q$, $q\in\mathbb{P}_{k-1}(T)$.
 Apply the vanishing DoF \eqref{Pkdof2} to acquire $v=0$.
\end{proof}

For $k\geq 0$, introduce a discontinuous vector space
\begin{equation*}
\Sigma_h^{-1}(K)=\{\tau\in L^2(K;\mathbb{R}^d): \tau|_{T}\in\mathbb P_k(T;\mathbb{R}^d), T\in \mathcal{T}_h(K)\}=\prod_{T\in\mathcal{T}_h(K)}\mathbb{P}_{k}(T;\mathbb{R}^d).
\end{equation*}
Define a bubble function space
\begin{equation*}
\mathbb{B}_k(K;\mathbb{R}^d)=\left\{\tau\in \Sigma_h^{-1}(K): 
\tau \cdot \boldsymbol{n}|_{\partial K}=0\right\}.
\end{equation*}
%Please refer to Fig. \ref{fig_bubblefunction} (a) for a 2D example with $k=0$. 
%Any function $\tau \in \mathbb{B}_k(K;\mathbb{R}^d)$ is a piecewise vector-valued $\mathbb{P}_{k}$ function on each triangle subelement $T\in\mathcal{T}_h(K)$, which can be expressed as a linear combination of the normal directions $\boldsymbol{n}$ and tangential direction $\boldsymbol{t}$, i.e. $\tau=(\tau \cdot \boldsymbol{n}) \boldsymbol{n}+(\tau \cdot \boldsymbol{t}) \boldsymbol{t}$. The condition $\tau \cdot \boldsymbol{n}|_{\partial K}=0$ implies that the component of $\tau$ in the direction of the normal vector vanishes on the boundary of $\partial T \cap \partial K$.

Let $K\in\mathcal{K}_h$ be a polytope with the number of the $(d-1)$-dimensional boundary faces being $n$. 
Denote by $\boldsymbol{n}^F$ the unit outward normal vector of $K$ on boundary face $F\in\partial K$, and let $\boldsymbol{t}_i^{F}$, $i=1,...,d-1$ be the unit tangent vectors on the boundary face $F\in \partial K$. Introduce the characteristic function $\chi_{T}$
$$
\chi_{T}=
\left\{\begin{array}{ll}
1,& T,
\\
0,& \mathcal{T}_h(K)\setminus T.
\end{array}
\right.
$$

\begin{lemma}\label{lemma_DOF_discreption}
For $K\in\mathcal{K}_h$ and integer $k\geq0$,
we have the geometric decomposition
\begin{equation}\label{bubblegeodecomp}
\mathbb{B}_k(K;\mathbb{R}^d)=\Oplus_{i=1}^n\big(\lambda_{\boldsymbol{x}_K}\boldsymbol{n}^{F_i}\chi_{T_i}\mathbb{P}_{k-1}(T_i) \oplus \Oplus_{j=1}^{d-1}\boldsymbol{t}_j^{F_i}\chi_{T_i}\mathbb{P}_{k}(T_i)\big).
\end{equation}
%and consequently $\dim \mathbb{B}_k(K;\mathbb{R}^d) = |\mathcal{T}_h(K)|(k+d-1){k+d-1\choose d-1}$.
\end{lemma}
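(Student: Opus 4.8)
The plan is to prove \eqref{bubblegeodecomp} simplex by simplex, exploiting that functions in $\Sigma_h^{-1}(K)$ are completely unconstrained across the interior dual faces of $K$. The bubble condition $\tau\cdot\boldsymbol{n}|_{\partial K}=0$ therefore couples only the restriction $\tau|_{T_i}$ to its single boundary face $F_i=\partial T_i\cap\partial K$, and since the cutoffs $\chi_{T_i}$ have pairwise disjoint supports, the outer sum over $i$ is automatically direct. It thus suffices to show that, as $\tau$ runs over $\mathbb{B}_k(K;\mathbb{R}^d)$, each restriction $\tau|_{T_i}$ ranges exactly over $\lambda_{\boldsymbol{x}_K}\boldsymbol{n}^{F_i}\mathbb{P}_{k-1}(T_i)\oplus\bigoplus_{j=1}^{d-1}\boldsymbol{t}_j^{F_i}\mathbb{P}_k(T_i)$.

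First I would fix a simplex $T_i$ and use the frame $\{\boldsymbol{n}^{F_i},\boldsymbol{t}_1^{F_i},\dots,\boldsymbol{t}_{d-1}^{F_i}\}$ attached to the flat face $F_i$ (a basis of $\mathbb{R}^d$ with the tangents orthogonal to the normal) to write any $\tau|_{T_i}\in\mathbb{P}_k(T_i;\mathbb{R}^d)$ uniquely as $\tau|_{T_i}=a_0\,\boldsymbol{n}^{F_i}+\sum_{j=1}^{d-1}a_j\,\boldsymbol{t}_j^{F_i}$ with scalar coefficients $a_0,a_1,\dots,a_{d-1}\in\mathbb{P}_k(T_i)$. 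Because $T_i$ meets $\partial K$ only along $F_i$, where the outward normal of $K$ is $\boldsymbol{n}^{F_i}$, the bubble constraint collapses to the single scalar condition $(\tau\cdot\boldsymbol{n}^{F_i})|_{F_i}=a_0|_{F_i}=0$, while the tangential coefficients $a_1,\dots,a_{d-1}$ stay free and contribute $\bigoplus_{j=1}^{d-1}\boldsymbol{t}_j^{F_i}\mathbb{P}_k(T_i)$.

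Next I would treat the normal coefficient. The face $F_i$ is precisely the zero set of the linear barycentric coordinate $\lambda_{\boldsymbol{x}_K}$ on $T_i$, so a degree-$k$ polynomial $a_0$ vanishing on $F_i$ is divisible by $\lambda_{\boldsymbol{x}_K}$; this is the same Bernstein-basis factorization already used in Lemma~\ref{lem:unisolPk}, and it yields $a_0=\lambda_{\boldsymbol{x}_K}q$ with a uniquely determined $q\in\mathbb{P}_{k-1}(T_i)$. Hence the normal part of $\tau|_{T_i}$ lies in $\lambda_{\boldsymbol{x}_K}\boldsymbol{n}^{F_i}\mathbb{P}_{k-1}(T_i)$, and conversely every such field with arbitrary tangential parts has vanishing normal trace on $F_i$, using $\lambda_{\boldsymbol{x}_K}|_{F_i}=0$ and $\boldsymbol{t}_j^{F_i}\cdot\boldsymbol{n}^{F_i}=0$. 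Reassembling these fields over $i$ via the indicators $\chi_{T_i}$ reproduces exactly the right-hand side of \eqref{bubblegeodecomp}, which therefore coincides with $\mathbb{B}_k(K;\mathbb{R}^d)$.

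I expect no step to be a genuine obstacle: the only point requiring care is the divisibility of the vanishing normal coefficient by $\lambda_{\boldsymbol{x}_K}$, which is inherited verbatim from Lemma~\ref{lem:unisolPk}. As a consistency check one may verify the dimension count: each $T_i$ contributes $\dim\mathbb{P}_{k-1}(T)+(d-1)\dim\mathbb{P}_k(T)$, and summing over the $n$ faces matches $\dim\mathbb{B}_k(K;\mathbb{R}^d)=n\big(d\dim\mathbb{P}_k(T)-\dim\mathbb{P}_k(F)\big)$ by the identity $\dim\mathbb{P}_k(F)+\dim\mathbb{P}_{k-1}(T)=\dim\mathbb{P}_k(T)$.
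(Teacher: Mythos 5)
Your proposal is correct and follows essentially the same route as the paper's proof: expand $\tau|_{T_i}$ in the frame $\{\boldsymbol{n}^{F_i},\boldsymbol{t}_1^{F_i},\dots,\boldsymbol{t}_{d-1}^{F_i}\}$, observe that the bubble condition reduces to the vanishing of the normal coefficient on $F_i$, and factor out $\lambda_{\boldsymbol{x}_K}$ as in Lemma~\ref{lem:unisolPk}. You merely spell out the divisibility step and the dimension count more explicitly than the paper does; no gap.
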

\begin{proof}
It can be verified directly that
\begin{equation*}
\Oplus_{i=1}^n\big(\lambda_{\boldsymbol{x}_K}\boldsymbol{n}^{F_i}\chi_{T_i}\mathbb{P}_{k-1}(T_i) \oplus \Oplus_{j=1}^{d-1}\boldsymbol{t}_j^{F_i}\chi_{T_i}\mathbb{P}_{k}(T_i)\big) \subseteq \mathbb{B}_k(K;\mathbb{R}^d).
\end{equation*}
On the other side, let $\tau\in\mathbb{B}_k(K;\mathbb{R}^d)$.
For $i=1,\ldots, n$, $\tau|_{T_i}\in\mathbb{P}_{k}(T_i;\mathbb R^d)$. 
We use frame $\{ \boldsymbol{n}^{F_i}, \boldsymbol{t}_j^{F_i}, j=1,\ldots,d-1 \}$ to expand the vector function in $T_i$.
By $\tau\cdot\boldsymbol{n}|_{F_i}=0$, we have $\tau|_{T_i}\in \lambda_{\boldsymbol{x}_K}\boldsymbol{n}^{F_i}\mathbb{P}_{k-1}(T_i) \oplus \Oplus_{j=1}^{d-1}\boldsymbol{t}_j^{F_i}\mathbb{P}_{k}(T_i)$.
Therefore, the geometric decomposition \eqref{bubblegeodecomp} holds.
\end{proof}

\begin{lemma}\label{lem:unisolSigmak}
For $K\in\mathcal{K}_h$ and integer $k\geq0$, the degrees of freedom 
\begin{subequations}\label{equ_dofs_old}
\begin{align}
\int_{F}\sigma \cdot \boldsymbol{n}\, q \dd S,&\quad q\in \mathbb{P}_k(F), F\in \partial K ,\label{equ_dofs_old1} \\
\int_K\sigma \cdot \tau \dx,& \quad  \tau \in \mathbb{B}_k(K;\mathbb{R}^d), \label{equ_dofs_old2}
\end{align}
\end{subequations}
are unisolvent for space $\Sigma_h^{-1}(K)$.
\end{lemma}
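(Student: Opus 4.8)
The plan is to follow the standard two-step recipe for unisolvence: first check that the number of functionals in \eqref{equ_dofs_old} equals $\dim\Sigma_h^{-1}(K)$, and then show that any $\sigma\in\Sigma_h^{-1}(K)$ annihilated by every functional must be zero. Both steps are organized around the splitting of $\Sigma_h^{-1}(K)$ into a boundary normal-trace part and the interior bubble space $\mathbb{B}_k(K;\mathbb{R}^d)$, so the geometric decomposition of Lemma~\ref{lemma_DOF_discreption} is the main tool.

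For the count, since each $T\in\mathcal{T}_h(K)$ is a $d$-simplex we have $\dim\Sigma_h^{-1}(K)=n\,d\binom{k+d}{d}$, where $n$ is the number of boundary faces of $K$. The face functionals \eqref{equ_dofs_old1} contribute $n\binom{k+d-1}{d-1}$, while Lemma~\ref{lemma_DOF_discreption} gives $\dim\mathbb{B}_k(K;\mathbb{R}^d)=n\bigl(\binom{k+d-1}{d}+(d-1)\binom{k+d}{d}\bigr)$ for the bubble functionals \eqref{equ_dofs_old2}. Adding the two and dividing by $n$, the required equality collapses to Pascal's rule
$$
\binom{k+d-1}{d-1}+\binom{k+d-1}{d}=\binom{k+d}{d},
$$
so the number of functionals matches the dimension of the space.

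For injectivity, I would assume all functionals vanish on some $\sigma\in\Sigma_h^{-1}(K)$. On each boundary face $F=\partial T\cap\partial K$ the normal trace $\sigma|_T\cdot\boldsymbol{n}$ belongs to $\mathbb{P}_k(F)$, so taking $q=\sigma|_T\cdot\boldsymbol{n}$ in \eqref{equ_dofs_old1} yields $\|\sigma\cdot\boldsymbol{n}\|_F=0$; hence $\sigma\cdot\boldsymbol{n}|_{\partial K}=0$, i.e.\ $\sigma\in\mathbb{B}_k(K;\mathbb{R}^d)$. This makes $\sigma$ an admissible test function in \eqref{equ_dofs_old2}, and choosing $\tau=\sigma$ there gives $\int_K|\sigma|^2\dx=0$, so $\sigma=0$. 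Combined with the dimension equality, this establishes unisolvence.

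The step I expect to be most delicate is not any single computation but the bookkeeping that makes the two families of functionals align with the two summands of the space: the face functionals must pin down precisely the normal traces on $\partial K$, after which the bubble functionals act on the complementary interior space. Both facts are furnished by Lemma~\ref{lemma_DOF_discreption}, which simultaneously supplies the correct value of $\dim\mathbb{B}_k(K;\mathbb{R}^d)$ and certifies that $\{\sigma\cdot\boldsymbol{n}|_{\partial K}=0\}$ is exactly $\mathbb{B}_k(K;\mathbb{R}^d)$; the remaining arguments are just the $L^2$-nondegeneracy observation applied twice.
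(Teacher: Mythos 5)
Your proposal is correct and follows essentially the same route as the paper: count the functionals via the geometric decomposition of $\mathbb{B}_k(K;\mathbb{R}^d)$ (the paper cites Lemma~\ref{lem:unisolPk}, which encodes the same Pascal identity you write out), then use vanishing of \eqref{equ_dofs_old1} to place $\sigma$ in $\mathbb{B}_k(K;\mathbb{R}^d)$ and take $\tau=\sigma$ in \eqref{equ_dofs_old2}. One cosmetic remark: the assumption that each $T\in\mathcal{T}_h(K)$ is a $d$-simplex is neither needed nor guaranteed by the paper's setup for $d\ge 3$, but your dimension count is unaffected since $\dim\mathbb{P}_k(T;\mathbb{R}^d)=d\binom{k+d}{d}$ for any $d$-dimensional $T$.
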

\begin{proof}
Thanks to Lemma~\ref{lem:unisolPk} and the geometric decomposition \eqref{bubblegeodecomp}, the number of DoFs \eqref{equ_dofs_old} equals $\dim\Sigma_h^{-1}(K)$. Vanishing \eqref{equ_dofs_old1} implies $\sigma \in \mathbb{B}_k(K;\mathbb{R}^d)$ and then vanishing \eqref{equ_dofs_old2} implies $\sigma = \boldsymbol{0}$. 
This ends the proof.
\end{proof}

We move some local tangential bubble functions in~(\ref{equ_dofs_old2}) to form the $\mathbb{P}_k$ polynomials on $K$ in (\ref{equ_dofs_dd_polytope_new3}). Let 
\begin{equation}
  \displaystyle
\left\{\boldsymbol{t}_\ell, \ell=1,...,d\right\}\subset
\left\{\boldsymbol{t}_j^{F_i}, i=1,...,n, j=1,...,d-1\right\},\label{equ_edge_vector_property}
\end{equation}
be a linearly independent set, i.e.
\begin{equation}
  \displaystyle
\mathbb{R}^d=\text{span}\left\{\boldsymbol{t}_\ell, \ell=1,...,d\right\}\label{equ_assumption_ddpolytope}.
\end{equation}
Such a set of $\left\{\boldsymbol{t}_\ell, \ell=1,...,d\right\}$ must exist, otherwise,
$K$ would be contained in a hypersurface of dimension less than $d$, since all the faces of 
$K$ would be contained in this lower-dimensional hypersurface. 
%\LC{Add symbol in Fig. 5 to explain the choice of such $t_{\ell}$.}
% Define
% $$
% \Sigma_h(K)=\prod_{T\in\mathcal{T}_h(K)}\mathbb{P}_{k}(T;\mathbb{R}^d).
% $$

\begin{figure}[htbp]
\label{fig_bubblefunction}
\subfigure[DoFs given in (\ref{equ_dofs_old}).]{
\begin{minipage}[t]{0.45\linewidth}
\centering
\includegraphics*[width=4cm]{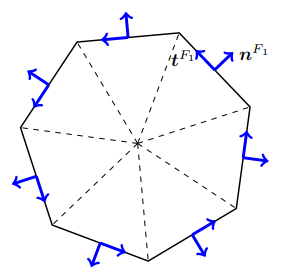}
\end{minipage}}%%
\subfigure[DoFs given in (\ref{equ_dofs_dd_polytope_new}).]
{\begin{minipage}[t]{0.45\linewidth}
\centering
\includegraphics*[width=4cm]{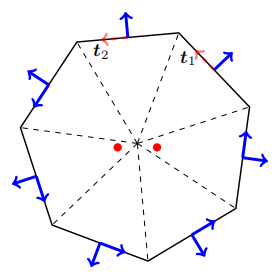}
\end{minipage}}
\caption{ Explanation of DoFs \eqref{equ_dofs_old} and \eqref{equ_dofs_dd_polytope_new}. The interior DoFs replace two tangent vectors (in dashed red).}
\end{figure}

\begin{lemma}
Define new DoFs for $\Sigma_h^{-1}(K)$ by
\begin{subequations}\label{equ_dofs_dd_polytope_new}
\begin{eqnarray}
&&\int_{F}\sigma \cdot \boldsymbol{n}\,q\dd S,\quad q \in \mathbb{P}_k(F), F\in \partial K ,\label{equ_dofs_dd_polytope_new1}
\\[0.1in]
&&\int_K\sigma \cdot \tau \dx, \quad \tau\in \mathbb{B}_k(K;\mathbb{R}^d)\setminus \Oplus_{\ell=1}^{d}\mathbb{P}_k(T_{\boldsymbol{t}_\ell})\boldsymbol{t}_\ell \chi_{T_{\boldsymbol{t}_\ell}},\label{equ_dofs_dd_polytope_new2}
\\[0.1in]
&&\int_K\sigma\cdot\tau \dx, \quad \tau\in \mathbb{P}_k(K;\mathbb{R}^d),\label{equ_dofs_dd_polytope_new3}
\end{eqnarray}
\end{subequations}
where $T_{\boldsymbol{t}_\ell}\in\mathcal{T}_h(K)$ is the subelement with tangent vector $\boldsymbol{t}_\ell$ for $ \ell=1,...,d$, as in \eqref{equ_edge_vector_property}. DoFs \eqref{equ_dofs_dd_polytope_new} are unisolvent for $\Sigma_h^{-1}(K)$.
\end{lemma}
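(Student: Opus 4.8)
The plan is to reduce the claim to injectivity by a dimension count, then to exploit the geometric decomposition \eqref{bubblegeodecomp} together with the spanning property \eqref{equ_assumption_ddpolytope}. First I would count the functionals. Since $K$ and each $T_{\boldsymbol t_\ell}$ are $d$-dimensional, $\dim\mathbb P_k(K)=\dim\mathbb P_k(T_{\boldsymbol t_\ell})=\binom{k+d}{d}$, so the $d$ tangential blocks $\bigoplus_{\ell=1}^d\mathbb P_k(T_{\boldsymbol t_\ell})\boldsymbol t_\ell\chi_{T_{\boldsymbol t_\ell}}$ deleted from \eqref{equ_dofs_old2} to form \eqref{equ_dofs_dd_polytope_new2} account for $d\dim\mathbb P_k(T_{\boldsymbol t_\ell})$ conditions, exactly matching the count $d\dim\mathbb P_k(K)$ of the new volume DoFs \eqref{equ_dofs_dd_polytope_new3}. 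Hence the cardinality of \eqref{equ_dofs_dd_polytope_new} equals that of \eqref{equ_dofs_old}, which by Lemma~\ref{lem:unisolSigmak} equals $\dim\Sigma_h^{-1}(K)$. It therefore suffices to show that any $\sigma\in\Sigma_h^{-1}(K)$ annihilating all of \eqref{equ_dofs_dd_polytope_new} is zero.

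As in Lemma~\ref{lem:unisolSigmak}, the vanishing of \eqref{equ_dofs_dd_polytope_new1} forces $\sigma\cdot\boldsymbol n|_{\partial K}=0$, i.e.\ $\sigma\in\mathbb B_k(K;\mathbb R^d)$, so I can expand $\sigma|_{T_i}$ in the orthonormal per-face frame $\{\boldsymbol n^{F_i},\boldsymbol t_1^{F_i},\ldots,\boldsymbol t_{d-1}^{F_i}\}$ following \eqref{bubblegeodecomp}. The DoFs \eqref{equ_dofs_dd_polytope_new2} test $\sigma$ against every summand of \eqref{bubblegeodecomp} except the $d$ selected tangential blocks. Testing against the normal bubbles $\lambda_{\boldsymbol x_K}\boldsymbol n^{F_i}\chi_{T_i}\mathbb P_{k-1}(T_i)$, writing $\sigma\cdot\boldsymbol n^{F_i}=\lambda_{\boldsymbol x_K}\tilde g_i$ (as in Lemma~\ref{lem:unisolPk}) and choosing the test polynomial equal to $\tilde g_i$, a positivity argument kills every normal component; testing against the retained tangential bubbles likewise kills every non-selected tangential component. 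Consequently $\sigma$ collapses to $\sigma=\sum_{\ell=1}^d h_\ell\,\boldsymbol t_\ell\chi_{T_{\boldsymbol t_\ell}}$ with $h_\ell\in\mathbb P_k(T_{\boldsymbol t_\ell})$.

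It then remains to use \eqref{equ_dofs_dd_polytope_new3}. Because $\{\boldsymbol t_\ell\}_{\ell=1}^d$ is a basis of $\mathbb R^d$ by \eqref{equ_assumption_ddpolytope}, one has $\mathbb P_k(K;\mathbb R^d)=\bigoplus_{m=1}^d\mathbb P_k(K)\boldsymbol t_m$, so \eqref{equ_dofs_dd_polytope_new3} is equivalent to $\sum_{\ell=1}^d(\boldsymbol t_m\cdot\boldsymbol t_\ell)\int_{T_{\boldsymbol t_\ell}}h_\ell\,q=0$ for all $q\in\mathbb P_k(K)$ and all $m=1,\dots,d$. For each fixed $q$ this is a linear system whose matrix is the Gram matrix $(\boldsymbol t_m\cdot\boldsymbol t_\ell)_{m,\ell}$, which is symmetric positive definite precisely by the linear independence in \eqref{equ_assumption_ddpolytope}; inverting it yields $\int_{T_{\boldsymbol t_\ell}}h_\ell\,q=0$ for every $q\in\mathbb P_k(K)$ and every $\ell$. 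Since each $T_{\boldsymbol t_\ell}$ is a full-dimensional simplex, the restriction $\mathbb P_k(K)\to\mathbb P_k(T_{\boldsymbol t_\ell})$ is surjective, so taking $q|_{T_{\boldsymbol t_\ell}}=h_\ell$ gives $\int_{T_{\boldsymbol t_\ell}}h_\ell^2=0$, hence $h_\ell=0$ and $\sigma=0$.

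The hard part will be exactly the coupling step above: the deleted DoFs are attached to the $d$ fixed directions $\boldsymbol t_\ell$ sitting on possibly distinct subelements, whereas the new DoFs use global polynomials on all of $K$. The spanning condition \eqref{equ_assumption_ddpolytope} is what guarantees, through invertibility of the Gram matrix and surjectivity of the restriction onto the full-dimensional $T_{\boldsymbol t_\ell}$, that testing against $\mathbb P_k(K;\mathbb R^d)$ recovers precisely the information carried by the removed tangential blocks. Without \eqref{equ_assumption_ddpolytope} the global polynomials could be blind to some tangential direction and unisolvence would fail, so I would make sure that this is the one place where the geometric hypothesis is genuinely invoked.
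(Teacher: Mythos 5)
Your proposal is correct and follows essentially the same route as the paper: a dimension count, reduction via \eqref{equ_dofs_dd_polytope_new1}--\eqref{equ_dofs_dd_polytope_new2} to $\sigma=\sum_{\ell=1}^d q_\ell\,\boldsymbol{t}_\ell\chi_{T_{\boldsymbol{t}_\ell}}$, and then recovery of the removed tangential information from \eqref{equ_dofs_dd_polytope_new3} using the linear independence \eqref{equ_assumption_ddpolytope}. Your inversion of the Gram matrix $(\boldsymbol{t}_m\cdot\boldsymbol{t}_\ell)$ is exactly the paper's choice of test functions $\tau=q_\ell\hat{\boldsymbol{t}}_\ell$ built from the dual basis, so the two arguments coincide.
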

\begin{proof}
By comparing DoFs \eqref{equ_dofs_dd_polytope_new} with DoFs \eqref{equ_dofs_old}, we have from \eqref{equ_assumption_ddpolytope} that the number of DoFs (\ref{equ_dofs_dd_polytope_new}) equals to the dimension of $\Sigma_h^{-1}(K)$.

% The number of DoFs (\ref{equ_dofs_dd_polytope_new}) are
% \begin{equation*}
% \begin{array}{l}
%   \displaystyle
% n \left( \begin{array}{c} k+d-1 \\ d-1 \end{array} \right) +n d  \left( \begin{array}{c} k+d \\ d \end{array} \right)-n  \left( \begin{array}{c} k+d-1 \\ d-1 \end{array} \right),
% \end{array}
% \end{equation*}
% which equals to the dimension of $\Sigma_h(K)$.

% For any $\sigma\in \Sigma_h(K)$, if the first DoFs in (\ref{equ_dofs_old}) vanish, we get $\sigma\in \mathbb{B}_k(K;\mathbb{R}^d)$.
% Form which we know that the DoFs (\ref{equ_dofs_old}) are unisolvent.

Assume that $\sigma\in \Sigma_h^{-1}(K)$ satisfying DoFs (\ref{equ_dofs_dd_polytope_new}) vanish.
The vanishing DoFs \eqref{equ_dofs_dd_polytope_new1}-\eqref{equ_dofs_dd_polytope_new2} imply 
%$\sigma\in\Oplus_{\ell=1}^{d}\mathbb{P}_k(T_{\boldsymbol{t}_\ell})\boldsymbol{t}_\ell \chi_{T_{\boldsymbol{t}_\ell}}$. Then
% , we know that $\sigma\in \mathbb{B}_k(K;\mathbb{R}^d)$.
% From Lemma \ref{lemma_DOF_discreption}, $\sigma$ has the following form,
% $$
% \sum_{i=1}^n \left\{\alpha_i p_{i} \boldsymbol{n}_i \chi_{T_i}+ \sum_{j=1}^{d-1}\beta_{i}^j q_{i}^j \boldsymbol{t}^{j}_i \chi_{T_i}\right\},
% $$
% with constant coefficients $\alpha_i, \beta_{i}^j$ and
% $p_{i}, q_{i}^j\in \mathbb{P}_k(T_i;\,\mathbb{R})$  and $ p_{i}|_{F_i}=0$, $F_i\in\partial K$.

% DoFs (\ref{equ_dofs_dd_polytope_new2}) vanish gives that
$$
\sigma=\sum_{\ell=1}^{d} q_{\ell} \boldsymbol{t}_\ell  \chi_{T_{\boldsymbol{t}_\ell}}\quad q_{\ell}\in\mathbb{P}_k(T_{\boldsymbol{t}_\ell}).
$$
The domain of polynomial $q_{\ell}$ can be extended to $K$.
% where coefficients $\{\beta_{\ell}\} \subset  \{\beta_{i}^j\}$ and functions $\{q_{\ell}\} \subset \{q_{i}^j\}$
%  corresponds to the $\{\boldsymbol{t}_\ell \} \subset \{\boldsymbol{t}_i^{j}\}$ we choose.
% From (\ref{equ_assumption_ddpolytope}) we know that
% \begin{equation*}
% \mathbb{P}_k(K;\mathbb{R}^d)=\text{span}\left\{ \mathbb{P}_k(K)\boldsymbol{t}_\ell, \ell=1,...,d\right\}.
% \end{equation*}
Let $\{\hat{\boldsymbol{t}}_\ell, \ell=1, \ldots, d\}$ be the dual bases of $\{\boldsymbol{t}_\ell, \ell=1, \ldots ,d\}$, i.e. $(\hat{\boldsymbol{t}}_i, \boldsymbol t_j) = \delta_{ij}$ for $i,j=1,\ldots, d$.
Taking $\tau=q_{\ell}\hat{\boldsymbol{t}}_\ell$, $\ell=1,...,d$ in
the vanishing DoFs (\ref{equ_dofs_dd_polytope_new3}), we get $\sigma=\boldsymbol{0}$, which finishes the proof.
\end{proof}

Notice that DoFs \eqref{equ_dofs_dd_polytope_new} is introduced to verify the inf-sup condition. In implementation, we can use basis dual to DoFs \eqref{equ_dofs_old} or use the hybridization discussed in the next section.

%Thanks to the geometric decomposition \eqref{bubblegeodecomp} of bubble space $\mathbb{B}_k(K;\mathbb{R}^d)$, we have the geometric decomposition of the shape function space
%\begin{equation}
%\Sigma_h^{-1}(K)=\Oplus_{i=1}^n\chi_{T_i}\big(\boldsymbol{n}^{F_i}\mathbb{P}_{k}(F_i) \oplus \lambda_{\boldsymbol{x}_K}\boldsymbol{n}^{F_i}\mathbb{P}_{k-1}(T_i) \oplus \Oplus_{j=1}^{d-1}\boldsymbol{t}_j^{F_i}\mathbb{P}_{k}(T_i)\big).
%\end{equation}
%\LC{Present a geometric decomposition and basis and explanation on the implementation 
%$$
%\{\mathbb P_k(T_i)\otimes \mathcal T^{F_i} \oplus \mathbb P_k(T_i)\otimes \boldsymbol n_{F_i}, F_i\in \partial K\}.
%$$
%For each primal face $F_i\in \mathcal F_h^K$, we fix a face normal $\boldsymbol n_{F_i}$ and the corresponding DoF is single valued. Later on, we shall introduce hybridization and thus can use discontinuous polynomials. 
%}
% Let $a_h: \; \Sigma_h\times \Sigma_h\rightarrow \mathbb{R}$ be given by
% $$
% a_h(\sigma_h,\tau_h):=\sum_{T\in\mathcal{T}_h}(\sigma_h,\tau_h)_T.
% $$
% Define $b_h: \; \Sigma_h\times U_h\rightarrow \mathbb{R}$ by
% $$
% b_h(\sigma_h,u_h):=\sum_{T\in\mathcal{T}_h}-(\sigma_h,\nabla u_h)_T+\sum_{F\in\mathcal{F}_h^{K}}\langle \sigma_h\cdot\boldsymbol{n},[u_h]\rangle_F.
% $$

\begin{proof}[Proof of Lemma \ref{lm:infsup}]
For any $u_h\in U_h$, let $\sigma_h\in\Sigma_h$ satisfy
\begin{equation}\label{equ_dof_infsup}
\begin{aligned}
\quad\,(\sigma_h,\tau_h)_K &=-(\nabla u_h,\tau_h)_K, \;\quad\;\, \forall~\tau_h\in\mathbb{P}_k(K;\mathbb{R}^d), K\in\mathcal{K}_h,
\\
\langle \sigma_h\cdot\boldsymbol{n}, w\rangle_F &=h_F^{-1}\langle Q_b[u_h], w \rangle_F,\; \forall~w\in\mathbb{P}_k(F), F\in \mathcal{F}_h^{K},
\end{aligned}
\end{equation}
and the other DoFs in (\ref{equ_dofs_dd_polytope_new2}) vanish. It follows from the scaling argument that
\begin{eqnarray*}
\|\sigma_h\|_{0,h}^2\lesssim \left\{\sum_{T\in\mathcal{T}_h}\|\nabla u_h\|_T^2
+\sum_{F\in \mathcal{F}_h^{K}}h_F^{-1}\|Q_b[u_h]\|_{F}^2\right\}=\|u_h\|_{1,h}^2.
\end{eqnarray*}
Using (\ref{equ_dof_infsup}), we have
\begin{align*}
b_h(\sigma_h,u_h)&= \sum_{K\in\mathcal{K}_h}-(\sigma_h,\nabla u_h)_K+\sum_{F\in\mathcal{F}_h^{K}}\langle \sigma_h\cdot\boldsymbol{n},Q_b[u_h]\rangle_F
\\
&=\sum_{K\in\mathcal{K}_h}\|\nabla u_h\|_K^2
+\sum_{F\in \mathcal{F}_h^{K}}h_F^{-1}\|Q_b[u_h]\|_{F}^2
 = \|u_h\|_{1,h}^2.
\end{align*}
Then we get the inf-sup condition.
\end{proof}
We will give optimal order error analysis after we present a hybridization of our SDG method in the next section.

\section{Hybridization}\label{sec:hybrid}
In this section, we consider the hybridization of the staggered discontinuous Galerkin method \eqref{equ_variational_proble}, and present the optimal order error analysis. 
% We also present a connection to weak Galerkin (WG) method so that the implementation technique of WG can be applied.

\subsection{Spaces and weak differential operators}
In space $\Sigma_h$, the normal flux is continuous on the primal faces. Following the classical approach in~\cite{ArnoldBrezzi1985nonconforming}, such continuity can be weakly imposed by introducing Lagrange multipliers $u_b$ on the primal faces. Let
$$
M_h=\{u_h=\{u_0,u_b\}: \; u_0|_{K} \in \mathbb{P}_{k+1}(K), u_b|_F\in \mathbb{P}_k(F), \; K\in\mathcal{K}_h, \; F\in \mathcal{F}_h^{K}\},
$$
and
$$
\Sigma^{-1}_h=\{\sigma_h\in L^2(\Omega;\mathbb{R}^d):\; \sigma_h|_T\in \mathbb{P}_k(T;\mathbb{R}^d),\, T\in \mathcal{T}_h\}.
$$
Let $M_h^0$ be a subspace of $M_h$ with vanishing boundary values on $\partial\Omega$, i.e.
$$
M_h^0=\{u_h=\{u_0,u_b\}\in M_h: \; u_b|_F= 0, F\in\partial\Omega\}.
$$
For $u_h, v_h \in M_h$, define
$$
(u_h,v_h)_{0,h}=\sum_{K\in\mathcal{K}_h}(u_0,v_0)_K+\sum_{F\in \mathcal{F}_h^{K}}h_F\langle u_b,v_b\rangle_{F},
$$
which induces an $L^2$-type norm $\|u_h\|_{0,h}=(u_h,u_h)_{0,h}^{1/2}.$

Define the weak gradient operator $$\nabla_w: M_h\rightarrow \Sigma_h^{-1}$$ as
\begin{equation}\label{eq:wg}
\begin{aligned}
&(\nabla_w u_h)|_K =\nabla_{w,K}u_h \in \Sigma_h^{-1}(K), \quad K\in\mathcal{K}_h,
\\
  \displaystyle
&(\nabla_{w,K} u_h,\tau)_K =(\nabla u_0, \tau)_K+\langle u_b-u_0,\tau\cdot\boldsymbol{n}\rangle_{\partial K},\quad \forall~\tau\in\Sigma_h^{-1}(K).
\end{aligned}
\end{equation}

Define the weak divergence operator by
$$
\text{div}_w: \Sigma^{-1}_h \rightarrow M_h,
$$
by
\begin{equation}\label{eq:divw}
\text{div}_w \sigma_h=\{\text{div}_{w,K}\sigma_h, -h_F^{-1}[\sigma_h\cdot\boldsymbol{n}]\}_{\mathcal{K}_h,\mathcal{F}_h^K},
\end{equation}
where the local weak divergence $\text{div}_{w,K} \sigma_h \in \mathbb{P}_{k+1}(K)$ is defined as
$$
(\text{div}_{w,K}\sigma_h,u_0)_K=
\sum_{T\in\mathcal{T}_h(K)}(\text{div} \sigma_h, u_0)_T 
-\sum_{F\in \mathcal{F}_h^T(K)\setminus \partial K} \langle  [\sigma_h\cdot\boldsymbol{n}], u_0\rangle_{F},
$$
for any $u_0\in \mathbb{P}_{k+1}(K)$.

For $u_h\in M_h$ and $\sigma_h\in \Sigma_h^{-1}$, using the integration by parts, we have
\begin{align*}
 (\nabla_w u_h,\sigma_h)&=\sum_{K\in\mathcal{K}_h}(\nabla_{w,K} u_h,\sigma_h)_K\\
& =\sum_{K\in\mathcal{K}_h}(\nabla u_0,\sigma_h)_K+\langle u_b-u_0,\sigma_h\cdot\boldsymbol{n}\rangle_{\partial K}\\
& =\sum_{T\in\mathcal{T}_h}-(u_0,\text{div}_h \sigma_h)_T +\sum_{F\in \mathcal{F}_h^T\setminus \mathcal{F}_h^K}\langle u_0, [\sigma_h\cdot\boldsymbol{n}]\rangle_{F} 
   +\sum_{F\in \mathcal{F}_h^K} \langle u_b,[\sigma_h\cdot\boldsymbol{n}]\rangle_{F}  \\
& =-\sum_{K\in\mathcal{K}_h}(\text{div}_{w,K}\sigma_h,u_0)_K-\sum_{F\in \mathcal{F}_h^{K}}h_F\langle u_b,-h_F^{-1} [\sigma_h\cdot\boldsymbol{n}] \rangle_{F}\\
& = -(\text{div}_w\sigma_h,u_h)_{0,h}.
\end{align*} 
That gives the fact that the weak gradient $\nabla_w$ defined by \eqref{eq:wg} and the weak divergence $\div_w$ defined by \eqref{eq:divw} are adjoint operators with respect to appropriate inner products.

\subsection{Hybridized variational form}
Let $a_h: \; \Sigma^{-1}_h\times \Sigma^{-1}_h\rightarrow \mathbb{R}$ be given by
$$
a_h(\sigma_h,\tau_h):=\sum_{T\in\mathcal{T}_h}(\sigma_h,\tau_h)_T.
$$
Define $b_h: \Sigma^{-1}_h \times M_h \rightarrow \mathbb{R}$ by
\begin{align*}
b_h(\sigma_h,u_h)&=(\text{div}_w\sigma_h,u_h)_{0,h}=-(\nabla_w u_h,\sigma_h)
\\
&=\sum_{K\in\mathcal{K}_h}\left\{-(\sigma_h,\nabla u_0)_K+\langle \sigma_h\cdot\boldsymbol{n}, Q_bu_0-u_b\rangle_{\partial
K}\right\}.
\end{align*}

The hybridized variational problem is given by: Find $\sigma_h\in \Sigma^{-1}_h$ and $u_h\in M_h^0$ such that
\begin{subequations}\label{equ_hy}
\begin{align}
\label{equ_hy1}
a_h(\sigma_h,\tau_h)+b_h(\tau_h,u_h)&=0,\qquad\quad\;\forall~\tau_h\in \Sigma^{-1}_h,
\\
\label{equ_hy2}
b_h(\sigma_h,v_h)&=(-f,v_0),\;\forall~v_h\in M_h^0.
\end{align}
\end{subequations}

To address the continuity, with abuse of notation, we introduce
$$
\|u_h\|_{1,h}^2:=\sum_{K\in\mathcal{K}_h}\|\nabla u_0\|_K^2+\sum_{K\in\mathcal{K}_h}h_K^{-1}\|u_0-u_b\|_{\partial K}^2, \quad u_h\in M_h,
$$
and
$$
\|\sigma_h\|_{0,h}^2:=\sum_{K\in\mathcal{K}_h}\|\sigma_h\|_K^2
+\sum_{K\in \mathcal{K}_h}h_K\|\sigma_h \cdot \boldsymbol{n}\|^2_{\partial K},\quad \sigma_h\in \Sigma_h^{-1}.
$$
It is easy to show that $\|\cdot\|_{1,h}$ is a norm on space $M_h^0$
and $\|\cdot\|_{0,h}$ is a norm on space $\Sigma_h^{-1}$, respectively. The norm $\|\cdot\|_{1,h}$ can be applied to functions $u\in H^1(\Omega)$, in which case, $u$ is piecewise smooth on $\mathcal T_h$ and by taking $u_0 = u$ and $u_b = {\rm tr}_F u$  for all $F\in \mathcal F^K_h$, we have $\|u\|_{1,h} = \|\nabla u\|$. Similarly $\|\cdot\|_{0,h}$ can be defined for function $\sigma\in L^2(\Omega)$ and $\sigma\cdot \boldsymbol n|_F\in L^2(F)$ for all $F\in \mathcal F^K_h$.

Following the argument for proving \eqref{normequiv1}, we have the norm equivalence
$$
\|u_h\|_{1,h}^2\eqsim \sum_{K\in\mathcal{K}_h}\|\nabla u_0\|_K^2+\sum_{K\in\mathcal{K}_h}h_K^{-1}\|Q_bu_0-u_b\|_{\partial K}^2, \quad u_h\in M_h.
$$

Again using the Cauchy-Schwarz inequality, we have the continuity. 

\begin{lemma}
The bilinear forms $a_h(\cdot,\cdot)$ and $b_h(\cdot,\cdot)$ are continuous. 
\begin{equation*}
\begin{aligned}
a_h(\sigma_h,\tau_h) &\leq \|\sigma_h\|_{0,h}\|\tau_h\|_{0,h}, \quad\,\forall~\sigma_h,\tau_h\in \Sigma_h^{-1},
\\
b_h(\sigma_h,u_h) &\lesssim\|\sigma_h\|_{0,h}\|u_h\|_{1,h}, \quad\forall~\sigma_h\in \Sigma_h^{-1}, u_h\in M_h^0.
\end{aligned}
\end{equation*}
% where $C$ is a positive constant.
\end{lemma}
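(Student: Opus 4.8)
The plan is to establish both bounds by direct application of the Cauchy--Schwarz inequality, treating the volume and facet contributions separately and exploiting the structure of the norms $\|\cdot\|_{0,h}$ and $\|\cdot\|_{1,h}$ on $\Sigma_h^{-1}$ and $M_h^0$.

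For $a_h$, note that $a_h(\sigma_h,\tau_h)=\sum_{T\in\mathcal{T}_h}(\sigma_h,\tau_h)_T=(\sigma_h,\tau_h)$ is simply the global $L^2$-inner product, so Cauchy--Schwarz gives $a_h(\sigma_h,\tau_h)\le\|\sigma_h\|\,\|\tau_h\|$. It then suffices to observe that $\|\sigma_h\|^2=\sum_{K}\|\sigma_h\|_K^2\le\|\sigma_h\|_{0,h}^2$ directly from the definition of $\|\cdot\|_{0,h}$, which augments the $L^2$ term by the nonnegative facet term $\sum_{K}h_K\|\sigma_h\cdot\boldsymbol{n}\|_{\partial K}^2$. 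This yields the stated bound with constant one.

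For $b_h$, I would split $b_h(\sigma_h,u_h)$ into the volume part $-\sum_{K}(\sigma_h,\nabla u_0)_K$ and the facet part $\sum_{K}\langle\sigma_h\cdot\boldsymbol{n},Q_bu_0-u_b\rangle_{\partial K}$. The volume part is controlled by a single Cauchy--Schwarz over the elements, $\sum_{K}(\sigma_h,\nabla u_0)_K\le\big(\sum_{K}\|\sigma_h\|_K^2\big)^{1/2}\big(\sum_{K}\|\nabla u_0\|_K^2\big)^{1/2}\le\|\sigma_h\|_{0,h}\|u_h\|_{1,h}$. For the facet part, I would insert the scaling factors $h_K^{1/2}$ and $h_K^{-1/2}$, apply Cauchy--Schwarz on each $\partial K$ and then over the elements, obtaining the product $\big(\sum_{K}h_K\|\sigma_h\cdot\boldsymbol{n}\|_{\partial K}^2\big)^{1/2}\big(\sum_{K}h_K^{-1}\|Q_bu_0-u_b\|_{\partial K}^2\big)^{1/2}$; the first factor is bounded by $\|\sigma_h\|_{0,h}$.

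The only point requiring a little care---and what I would flag as the main, if modest, obstacle---is controlling the second factor by $\|u_h\|_{1,h}$, since that norm is defined through $\|u_0-u_b\|_{\partial K}$ rather than $\|Q_bu_0-u_b\|_{\partial K}$. Here I would use that $u_b|_F\in\mathbb{P}_k(F)$, so $Q_bu_b=u_b$ and hence $Q_bu_0-u_b=Q_b(u_0-u_b)$; since the $L^2$-projection $Q_b$ is a contraction, $\|Q_bu_0-u_b\|_{\partial K}\le\|u_0-u_b\|_{\partial K}$, giving $\sum_{K}h_K^{-1}\|Q_bu_0-u_b\|_{\partial K}^2\le\|u_h\|_{1,h}^2$. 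Equivalently, one may invoke directly the norm equivalence established just above, namely $\|u_h\|_{1,h}^2\eqsim\sum_{K}\|\nabla u_0\|_K^2+\sum_{K}h_K^{-1}\|Q_bu_0-u_b\|_{\partial K}^2$. Combining the volume and facet estimates then yields $b_h(\sigma_h,u_h)\lesssim\|\sigma_h\|_{0,h}\|u_h\|_{1,h}$, completing the proof.
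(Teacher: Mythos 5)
Your argument is correct and is exactly the route the paper intends: the paper simply states that the continuity follows ``again using the Cauchy--Schwarz inequality,'' and your write-up supplies the natural details of that one-line proof (element-wise Cauchy--Schwarz for the volume term, the $h_K^{\pm1/2}$ scaling on the facet term, and the observation that $Q_bu_0-u_b=Q_b(u_0-u_b)$ with $Q_b$ a contraction). No gap; the handling of the $Q_b$ mismatch is the right point to flag and you resolve it correctly.
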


Using the norm equivalence \eqref{normequiv2}, we get the coercivity. 
\begin{lemma}The bilinear form $a_h(\cdot,\cdot)$ is coercive, i.e. there is a positive constant $\alpha$ such that
$$
a_h(\tau_h,\tau_h)\geq \alpha \|\tau_h\|_{0,h}^2,\quad \forall~\tau_h\in \Sigma_h^{-1}.
$$
\end{lemma}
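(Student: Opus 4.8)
The plan is to first observe that the form $a_h$ is nothing but the full $L^2$ inner product on $\Omega$. Since the subelements $T\in\mathcal{T}_h$ partition each polytope $K$ and hence all of $\Omega$, we have $a_h(\tau_h,\tau_h)=\sum_{T\in\mathcal{T}_h}\|\tau_h\|_T^2=\|\tau_h\|^2=\sum_{K\in\mathcal{K}_h}\|\tau_h\|_K^2$, which is exactly the volume contribution in $\|\tau_h\|_{0,h}^2$. Consequently, proving coercivity reduces to dominating the boundary flux term by the volume term, i.e. establishing $\|\tau_h\|_{0,h}^2\lesssim\|\tau_h\|^2=a_h(\tau_h,\tau_h)$; the constant $\alpha$ is then the reciprocal of the hidden constant. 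This is the same mechanism used for the norm equivalence \eqref{normequiv2}, now transcribed to the fully discontinuous space $\Sigma_h^{-1}$.

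The boundary estimate is carried out locally on each $K$. For a primal face $F\in\partial K$, let $T\in\mathcal{T}_h(K)$ be the unique subelement with $F=\partial T\cap\partial K$, so that the trace of $\tau_h\cdot\boldsymbol{n}$ on $F$ comes from $\tau_h|_T\in\mathbb{P}_k(T;\mathbb{R}^d)$. Applying the trace inequality on $T$ gives $\|\tau_h\cdot\boldsymbol{n}\|_F^2\le\|\tau_h\cdot\boldsymbol{n}\|_{\partial T}^2\lesssim h_T^{-1}\|\tau_h\|_{0,T}^2+h_T|\tau_h|_{1,T}^2$, and the inverse inequality $|\tau_h|_{1,T}\lesssim h_T^{-1}\|\tau_h\|_{0,T}$ (valid because $\tau_h|_T$ is polynomial) absorbs the seminorm, yielding $\|\tau_h\cdot\boldsymbol{n}\|_F^2\lesssim h_T^{-1}\|\tau_h\|_{0,T}^2$. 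Because the correspondence $F\mapsto T$ is a bijection between the faces of $\partial K$ and the subelements of $\mathcal{T}_h(K)$, summing over $F$ and multiplying by $h_K$ gives
\begin{equation*}
h_K\|\tau_h\cdot\boldsymbol{n}\|_{\partial K}^2\lesssim h_K\sum_{T\in\mathcal{T}_h(K)}h_T^{-1}\|\tau_h\|_{0,T}^2\lesssim\sum_{T\in\mathcal{T}_h(K)}\|\tau_h\|_{0,T}^2=\|\tau_h\|_K^2,
\end{equation*}
where the final step uses the scale equivalence $h_K\eqsim h_T$.

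Summing this over all $K\in\mathcal{K}_h$ and combining with $a_h(\tau_h,\tau_h)=\|\tau_h\|^2$ shows $\|\tau_h\|_{0,h}^2\lesssim a_h(\tau_h,\tau_h)$, which is the claim. The only point requiring care — the main (but minor) obstacle — is justifying $h_K\eqsim h_T$ uniformly. This follows from the mesh regularity: since $\boldsymbol{x}_K$ is the center of the largest inscribed ball of radius $\rho_K$, the chunkiness bound A1 gives $\rho_K\eqsim h_K$, and each $T$ (a cone from $\boldsymbol{x}_K$ over a face of $K$) has diameter bounded below by $\rho_K\eqsim h_K$ and above by $h_K$, so $h_T\eqsim h_K$; assumption A2 supplies the analogous control of the face scales. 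As this is purely a matter of bookkeeping with the regularity hypotheses, there is no genuine difficulty beyond the local trace–inverse argument above.
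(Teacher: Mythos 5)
Your proof is correct and follows essentially the same route as the paper: the paper simply invokes the norm equivalence \eqref{normequiv2} (i.e. $\|\sigma_h\|_{0,h}\eqsim\|\sigma_h\|$, itself obtained from the elementwise trace and inverse inequalities), while you spell out that same trace-plus-inverse argument directly for the fully discontinuous space $\Sigma_h^{-1}$ and the $\partial K$-based norm, including the bookkeeping $h_T\eqsim h_K$. There is no gap.
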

%\begin{proof}
%Apply the similar argument for \eqref{normequiv2}.
%% Using the trace inequality and the inverse inequality we get
%% $$
%% \sum_{K\in \mathcal{K}_h}h_K\|\tau_h\cdot\boldsymbol{n}\|_{\partial K}^2\leq C_1\sum_{T\in\mathcal{T}_h} \|\tau_h\|_T^2+h^2_T\|\nabla\tau_h\|_T^2\leq C_2\sum_{T\in\mathcal{T}_h} \|\tau_h\|_T^2.
%% $$
%% We have the conclusion
%% $$
%% \|\tau_h\|^2_{0,h}=\sum_{T\in\mathcal{T}_h}\|\tau_h\|_T^2+\sum_{K\in\mathcal{K}_h}h_K\|\tau_h\cdot\boldsymbol{n}\|_{\partial K}^2\leq C^{-1}a_h(\tau_h,\tau_h).
%% $$
%\end{proof}

The key is to verify the inf-sup condition.
\begin{lemma}There exists a positive constant $\beta$ independent of $h$ such that
\begin{equation}\label{eqn_inf_sup_WG}
\inf_{u_h\in M_h^0}\sup_{\sigma_h\in\Sigma^{-1}_h}\frac{b_h(\sigma_h,u_h)}{\|\sigma_h\|_{0,h}\|u_h\|_{1,h}}\geq\beta>0.
\end{equation}
\end{lemma}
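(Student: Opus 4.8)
The plan is to mimic the proof of Lemma~\ref{lm:infsup}, exploiting the fact that in the hybridized setting the space $\Sigma_h^{-1}$ carries no continuity constraint across primal faces, so the test flux can be built purely element by element from the unisolvent DoFs \eqref{equ_dofs_dd_polytope_new}. Given $u_h=\{u_0,u_b\}\in M_h^0$, on each $K\in\mathcal{K}_h$ I would define $\sigma_h\in\Sigma_h^{-1}$ by prescribing
\begin{align*}
(\sigma_h,\tau)_K&=-(\nabla u_0,\tau)_K,&&\tau\in\mathbb{P}_k(K;\mathbb{R}^d),\\
\langle\sigma_h\cdot\boldsymbol{n},q\rangle_F&=h_F^{-1}\langle Q_bu_0-u_b,q\rangle_F,&&q\in\mathbb{P}_k(F),\ F\in\partial K,
\end{align*}
and letting the remaining bubble DoFs \eqref{equ_dofs_dd_polytope_new2} vanish. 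Since $u_0\in\mathbb{P}_{k+1}(K)$ gives $\nabla u_0\in\mathbb{P}_k(K;\mathbb{R}^d)$, and $Q_bu_0-u_b\in\mathbb{P}_k(F)$, these data are admissible, and the unisolvence of \eqref{equ_dofs_dd_polytope_new} determines $\sigma_h|_K$ uniquely. Because $\Sigma_h^{-1}$ allows $\sigma_h\cdot\boldsymbol{n}$ to be double-valued on each primal face, the two neighbouring polytopes can impose their own data $Q_bu_0-u_b$ independently; this local decoupling is precisely what hybridization buys and is what makes the construction cleaner than in Lemma~\ref{lm:infsup}.

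Next I would evaluate $b_h(\sigma_h,u_h)$ with this choice. Taking $\tau=\nabla u_0$ in the interior DoF gives $-(\sigma_h,\nabla u_0)_K=\|\nabla u_0\|_K^2$, while taking $q=Q_bu_0-u_b$ in the face DoF gives $\langle\sigma_h\cdot\boldsymbol{n},Q_bu_0-u_b\rangle_F=h_F^{-1}\|Q_bu_0-u_b\|_F^2$. Summing over $K$ and $F\in\partial K$,
\begin{equation*}
b_h(\sigma_h,u_h)=\sum_{K\in\mathcal{K}_h}\|\nabla u_0\|_K^2+\sum_{K\in\mathcal{K}_h}\sum_{F\in\partial K}h_F^{-1}\|Q_bu_0-u_b\|_F^2,
\end{equation*}
which, by condition (A2) and the norm equivalence recorded just before this lemma, satisfies $b_h(\sigma_h,u_h)\eqsim\|u_h\|_{1,h}^2$.

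It then remains to bound $\|\sigma_h\|_{0,h}\lesssim\|u_h\|_{1,h}$, and this is the step I expect to be the crux. The idea is a scaling argument: map $K$ together with its subtriangulation $\mathcal{T}_h(K)$ to a reference configuration of unit size, on which the linear map sending the DoF values to $\sigma_h|_K$ has operator norm bounded by a constant depending only on the shape-regularity parameters $\gamma_K,\eta_K,\eta_T$ of (A1)--(A2). On the reference element both $\|\sigma_h\|_K$ and $h_K^{1/2}\|\sigma_h\cdot\boldsymbol{n}\|_{\partial K}$ are controlled by the magnitudes of the prescribed DoFs, namely $\|\nabla u_0\|_K$ and $h_F^{-1/2}\|Q_bu_0-u_b\|_F$; scaling back and using the equivalence of norms on the finite-dimensional polynomial spaces yields
\begin{equation*}
\|\sigma_h\|_K^2+h_K\|\sigma_h\cdot\boldsymbol{n}\|_{\partial K}^2\lesssim\|\nabla u_0\|_K^2+\sum_{F\in\partial K}h_F^{-1}\|Q_bu_0-u_b\|_F^2.
\end{equation*}
Summing over $K$ and invoking (A2) gives $\|\sigma_h\|_{0,h}^2\lesssim\|u_h\|_{1,h}^2$. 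Combining the three steps,
\begin{equation*}
\sup_{\tau_h\in\Sigma_h^{-1}}\frac{b_h(\tau_h,u_h)}{\|\tau_h\|_{0,h}}\geq\frac{b_h(\sigma_h,u_h)}{\|\sigma_h\|_{0,h}}\gtrsim\frac{\|u_h\|_{1,h}^2}{\|u_h\|_{1,h}}=\|u_h\|_{1,h},
\end{equation*}
which is the claimed inf-sup condition \eqref{eqn_inf_sup_WG} with $\beta$ independent of $h$. The main subtlety throughout is to keep every scaling constant uniform, i.e.\ dependent only on $k$ and the mesh-regularity parameters, and not on $h$ nor on the number of faces $n$ of $K$; this is exactly where the uniform chunkiness and the comparability of $h_K,h_T,h_F$ in (A1)--(A2) are essential.
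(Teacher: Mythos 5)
Your construction is exactly the one the paper uses: prescribe the moments against $\mathbb{P}_k(K;\mathbb{R}^d)$ by $-(\nabla u_0,\tau)_K$, the primal-face normal moments by $h_F^{-1}\langle Q_bu_0-u_b,\cdot\rangle_F$ (the paper writes $h_K^{-1}$, equivalent under (A2)), set the remaining bubble DoFs of \eqref{equ_dofs_dd_polytope_new2} to zero, then evaluate $b_h$ and bound $\|\sigma_h\|_{0,h}$ by scaling. The argument is correct and matches the paper's proof in every essential step, including the use of the norm equivalence and the observation that the local decoupling of $\Sigma_h^{-1}$ makes the element-by-element construction admissible.
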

\begin{proof}
For any $u_h\in M_h^0$, let $\sigma_h\in\Sigma_h^{-1}$ satisfy that on each $K\in\mathcal{K}_h$,
\begin{align}
\label{equ_dof_infsup_hy1}
(\sigma_h,\tau_h)_K &= -(\nabla u_0,\tau_h)_K, \qquad\quad\; \forall~\tau_h\in\mathbb{P}_k(K;\mathbb{R}^d), 
\\
\label{equ_dof_infsup_hy2}
\langle \sigma_h\cdot\boldsymbol{n}, w\rangle_F &= h_K^{-1} \langle Q_bu_0-u_b, w \rangle_F,\; \forall~w\in\mathbb{P}_k(F), F\in \partial K,
\end{align}
and the other DoFs in (\ref{equ_dofs_dd_polytope_new2}) vanish. It follows from the scaling argument that
\begin{eqnarray*}
\|\sigma_h\|_{0,h}^2\lesssim\sum_{K\in\mathcal{K}_h}(\|\nabla u_0\|_K^2
+h_K^{-1}\|Q_bu_0-u_b\|_{\partial K}^2)\eqsim\|u_h\|_{1,h}^2.
\end{eqnarray*}
From \eqref{equ_dof_infsup_hy1}-\eqref{equ_dof_infsup_hy2}, we have
\begin{align*}
b_h(\sigma_h,u_h)&=\sum_{K\in\mathcal{K}_h}\left\{-(\sigma_h,\nabla u_0)_K+\langle \sigma_h\cdot\boldsymbol{n},Q_bu_0-u_b\rangle_{\partial
K}\right\}
\\
&=\sum_{K\in\mathcal{K}_h}\left\{\|\nabla u_0\|_K^2
+h_K^{-1}\|Q_bu_0-u_b\|_{\partial K}^2\right\}\eqsim\|u_h\|_{1,h}^2.
\end{align*}
We finished the proof.
\end{proof}

Therefore the variational problem (\ref{equ_hy}) is well-posed and stable. Next we show the equivalence to the original formulation.

\begin{theorem}
The hybridized variational form \eqref{equ_hy} has a unique solution $\sigma_h\in \Sigma^{-1}_h$ and $u_h=(u_0, u_b)\in M_h^0$ for $k\geq 0$, and 
\begin{equation}\label{eq:hystability}
\|\sigma_h\|_{0,h}+\|u_h\|_{1,h} \lesssim \| f\|. 
\end{equation}
Moreover, $\sigma_h\in \Sigma_h$ and $u_0\in U_h$ satisfy the variational form \eqref{equ_variational_proble}.
\end{theorem}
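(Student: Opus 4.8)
The plan is to separate the statement into two essentially independent parts: first, that the hybridized saddle-point system \eqref{equ_hy} is well-posed with the stability bound \eqref{eq:hystability}; and second, that the solution $\{\sigma_h,(u_0,u_b)\}$, once read off correctly, solves the original formulation \eqref{equ_variational_proble}. For the first part I would invoke the Babu\v{s}ka--Brezzi theory directly, since every hypothesis is already in hand: the continuity of $a_h$ and $b_h$, the coercivity of $a_h$ on $\Sigma_h^{-1}$, and the inf-sup condition \eqref{eqn_inf_sup_WG}. The only extra ingredient is a discrete Poincar\'e--Friedrichs inequality $\|v_0\|\lesssim\|v_h\|_{1,h}$ on $M_h^0$, which bounds the load functional $v_h\mapsto(-f,v_0)$ by $\|f\|\,\|v_h\|_{1,h}$; Brezzi's theorem then gives existence, uniqueness, and \eqref{eq:hystability}.

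For the second part, the mechanism is the action of the multiplier $u_b$, and I would start by recovering the flux continuity. Testing \eqref{equ_hy2} with $v_h=\{0,v_b\}$ for $v_b\in\mathbb{P}_k(F)$ supported on a single interior primal face kills the volume term, the $Q_bv_0$ term, and the right-hand side, leaving $\langle[\sigma_h\cdot\boldsymbol{n}],v_b\rangle_F=0$. Since $[\sigma_h\cdot\boldsymbol{n}]_F\in\mathbb{P}_k(F)$ and $v_b$ is arbitrary there, I conclude $[\sigma_h\cdot\boldsymbol{n}]_F=0$ on every $F\in\mathring{\mathcal{F}}_h^K$, that is $\sigma_h\in\Sigma_h$; the inclusion $u_0\in U_h$ is immediate from $u_0|_K\in\mathbb{P}_{k+1}(K)$.

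Next I would show that continuity of the normal flux makes the hybridized and original bilinear forms coincide. For $\tau_h\in\Sigma_h\subset\Sigma_h^{-1}$ I would rewrite $\sum_K\langle\tau_h\cdot\boldsymbol{n},Q_bu_0-u_b\rangle_{\partial K}$ as a sum over primal faces: on each interior face the two $u_b$ contributions enter with opposite normals and cancel because $\tau_h\cdot\boldsymbol{n}$ is single-valued, while the two $Q_bu_0$ terms combine into $Q_b[u_0]$; the boundary faces are handled by $u_b|_{\partial\Omega}=0$ together with the convention $[u_0]_F=u_0|_F$. Thus the hybridized $b_h(\tau_h,u_h)$ equals the original $-\sum_K(\tau_h,\nabla u_0)_K+\sum_F\langle\tau_h\cdot\boldsymbol{n},Q_b[u_0]\rangle_F$, so \eqref{equ_hy1} becomes \eqref{equ_variational_proble1}. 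Running the identical cancellation in \eqref{equ_hy2} with the now-continuous $\sigma_h$ and arbitrary $v_0\in U_h$ (any admissible $v_b$ works, since its contribution cancels) turns it into \eqref{equ_variational_proble2}. Hence $(\sigma_h,u_0)$ solves \eqref{equ_variational_proble}, and the bound \eqref{eq:hystability} transfers verbatim.

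The step I expect to demand the most care is the face-by-face bookkeeping in the second part: tracking the sign conventions for the outward normals on shared faces, confirming that the $u_b$ and $v_b$ terms cancel \emph{exactly} under flux continuity, and treating the boundary faces consistently with the jump operator $[\cdot]$ and the homogeneous condition $u_b|_{\partial\Omega}=0$. By contrast, the well-posedness argument is routine once the three structural lemmas (continuity, coercivity, inf-sup) are invoked.
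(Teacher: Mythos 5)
Your proposal is correct and follows essentially the same route as the paper: well-posedness and \eqref{eq:hystability} from the Babu\v{s}ka--Brezzi theory using the continuity, coercivity, and inf-sup lemmas, then taking $v_0=0$ (i.e.\ testing with $v_h=\{0,v_b\}$) in \eqref{equ_hy2} to recover $\sigma_h\in\Sigma_h$, and finally restricting $\tau_h\in\Sigma_h$ in \eqref{equ_hy1} to land back in \eqref{equ_variational_proble}. The only difference is that you spell out the face-by-face cancellation of the $u_b$ terms explicitly, which the paper leaves implicit.
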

\begin{proof}
The discrete method~\eqref{equ_hy} is well-posed thanks to the discrete inf-sup condition
\eqref{eqn_inf_sup_WG}. 
The stability~\eqref{eq:hystability} is from the Babu\v ska-Brezzi theory. 

% For the biharnominc equation~\eqref{eq:biharmonicMfemhy},  $f_h=(-Q_r f, 0,0,0)$. Therefore $\bs \sigma_h\in \Sigma_{k,r}^{\div\div}$ and $\div\div \bs \sigma_h = Q_{r}f$. By restricting $\bs \tau\in \Sigma_{k,r}^{\div\div}$ in~\eqref{eq:biharmonicMfemhy1}, we conclude $(\bs \sigma_h, (u_h)_0)$ is the solution to~\eqref{eq:biharmonicMfem}.  

% Let $u_h \in M_h^0$, $\sigma_h \in \Sigma_h^{-1}$ solve (\ref{equ_hy}), i.e.
% \begin{align}
% \label{eq:hy1} (\sigma_h,\tau_h)-\sum_{K\in\mathcal{K}_h}(\tau_h,\nabla u_0)_K+\langle \tau_h\cdot\boldsymbol{n},Q_bu_0- u_b \rangle_{\partial K}&=0,
% \\
% \label{eq:hy2} -\sum_{K\in\mathcal{K}_h}(\sigma_h,\nabla v_0)_K+\langle \sigma_h\cdot\boldsymbol{n},Q_bv_0-v_b\rangle_{\partial K}&=-(f,v_0),
% \end{align}
% for any $\tau_h\in \Sigma_h^{-1}$ and $v_h\in M_h^0$.
By taking $v_0=0$ in \eqref{equ_hy2}, we get $\sum_{K\in\mathcal{K}_h} \langle \sigma_h\cdot\boldsymbol{n}, v_b\rangle_{\partial K}=0$. That is $\sigma_h\in\Sigma_h$. 
Then $\sigma_h\in \Sigma_h$ satisfies the variational form \eqref{equ_variational_proble2}.
On the other hand, 
restricting $\tau_h\in\Sigma_h$ in \eqref{equ_hy1}, we know that $\sigma_h\in \Sigma_h$ and $u_0\in U_h$ satisfy the variational form~\eqref{equ_variational_proble1}.

% Then \eqref{eq:hy2} becomes
% \begin{equation}
% -\sum_{K\in\mathcal{K}_h}(\sigma_h,\nabla v_0)_K+\sum_{F\in\mathcal{F}_h^{K}}\langle \sigma_h\cdot\boldsymbol{n},Q_b[v_0]\rangle_F=-(f, v_0), \forall v_0\in U_h.
% \end{equation}
% For any $\tau_h \in \Sigma_h$, we have $\sum_{K\in\mathcal{K}_h} \langle \tau_h\cdot\boldsymbol{n}, u_b\rangle_{\partial K}=0$.
% Since $\Sigma_h\subset \Sigma_h^{-1}$, the first equation becomes
% \begin{equation}
%   \displaystyle
% (\sigma_h,\tau_h)-\sum_{K\in\mathcal{K}_h}(\tau_h,\nabla u_h)_K+\sum_{F\in\mathcal{F}_h^{K}}\langle \tau_h\cdot\boldsymbol{n},Q_b[u_h]\rangle_F=0,
% \forall \tau_h\in\Sigma_h.
% \end{equation}
% That is $u_0\in U_h$ and $\sigma_h\in\Sigma_h$ is the solution of (\ref{equ_variational_proble}).
\end{proof}
In the implementation, as $\sigma_h$ is piecewise polynomial, it can be eliminated element-wise and the saddle point problem \eqref{equ_hy} can be reduced to a symmetric and positive definite system; see Section \ref{sec:WG} for detailed discussion.

\subsection{Error analysis}
For any $\sigma\in H^{k+1}(\Omega; \mathbb{R}^d)$, define the interpolation $\sigma_I\in \Sigma_h$ by the DoF
\begin{subequations}\label{eq:interpolant}
\begin{align}
\int_{F}\sigma_I \cdot \boldsymbol{n}\,q\dd S&=\int_{F}\sigma \cdot \boldsymbol{n}\,q\dd S,\quad q \in \mathbb{P}_k(F), F\in \mathcal{F}_h^K ,
\\
\int_K\sigma_I \cdot \tau \dx&=\int_K\sigma \cdot \tau \dx, \quad\; \tau\in \mathbb{B}_k(K;\mathbb{R}^d)\setminus \Oplus_{\ell=1}^{d}\mathbb{P}_k(T_{\boldsymbol{t}_\ell})\boldsymbol{t}_\ell \chi_{T_{\boldsymbol{t}_\ell}}, K\in\mathcal{K}_h,
\\
\label{eq:interpolantdofPk} \int_K\sigma_I\cdot\tau \dx&=\int_K\sigma\cdot\tau \dx, \quad\; \tau\in \mathbb{P}_k(K;\mathbb{R}^d), K\in\mathcal{K}_h.
\end{align}
\end{subequations}
By the standard interpolation error estimate,
\begin{equation}\label{eqn_est_sigma_I}
\|\sigma-\sigma_I\|_{0,h}\lesssim h^{k+1}\|\sigma\|_{k+1}.
\end{equation}

\begin{theorem}
  Let $u\in H^{k+2}(\Omega)$ and $\sigma\in H^{k+1}(\Omega; \mathbb{R}^d)$ be the solution of \eqref{equ_mixed_poisson}, $k\geq 0$. 
  Let $u_h\in M_h^0$ and $\sigma_h\in\Sigma_h$ be the solution of  \eqref{equ_hy}.
  We have the following error estimates
  \begin{equation}\label{eq:energyerror}
    \|\sigma-\sigma_h\|_{0,h}+\|u-u_h\|_{1,h}\lesssim h^{k+1}(\|\sigma\|_{k+1}+\|u\|_{k+2}).
  \end{equation}
\end{theorem}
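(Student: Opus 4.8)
The plan is to combine the discrete stability already established for the hybridized saddle-point problem \eqref{equ_hy} with a consistency analysis of the exact solution, in the spirit of the second Strang lemma for mixed methods.

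First I would introduce computable representatives of the exact solution in the discrete spaces. For the flux I use the interpolant $\sigma_I\in\Sigma_h$ from \eqref{eq:interpolant}, for which \eqref{eqn_est_sigma_I} already gives $\|\sigma-\sigma_I\|_{0,h}\lesssim h^{k+1}\|\sigma\|_{k+1}$. For the scalar variable I set $u_I=\{Q_h u,Q_b u\}\in M_h^0$, where $Q_h$ is the $L^2$-projection onto $\mathbb P_{k+1}(K)$ and $Q_b$ the $L^2$-projection onto $\mathbb P_k(F)$; a standard projection-plus-trace estimate gives $\|u-u_I\|_{1,h}\lesssim h^{k+1}\|u\|_{k+2}$, where the exact solution is identified with $\{u,\mathrm{tr}\,u\}$ (note $\mathrm{tr}\,u=0$ on $\partial\Omega$, so this lies in $M_h^0$). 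By the triangle inequality it then suffices to bound the discrete errors $e_\sigma:=\sigma_I-\sigma_h\in\Sigma_h^{-1}$ and $e_u:=u_I-u_h\in M_h^0$.

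Next I derive the error equations by inserting the exact $(\sigma,u)$ into $a_h$ and $b_h$ and integrating by parts. For the first equation, using $\sigma=\nabla u$ together with the crucial fact that $\tau_h\cdot\boldsymbol n|_F\in\mathbb P_k(F)$ lies in the range of $Q_b$—so that the term $\langle \tau_h\cdot\boldsymbol n,\,Q_b u-\mathrm{tr}\,u\rangle_F$ vanishes by orthogonality—I expect the exact identity $a_h(\sigma,\tau_h)+b_h(\tau_h,\{u,\mathrm{tr}\,u\})=0$ for all $\tau_h\in\Sigma_h^{-1}$. For the second equation, integration by parts and $\nabla\cdot\sigma=-f$, combined with the continuity of $\sigma\cdot\boldsymbol n$ across interior faces and $v_b=0$ on $\partial\Omega$ (which makes $\sum_K\langle\sigma\cdot\boldsymbol n,v_b\rangle_{\partial K}=0$), should yield $b_h(\sigma,v_h)=(-f,v_0)+E_c(v_h)$ with the consistency functional
$$E_c(v_h)=\sum_{K\in\mathcal K_h}\langle \sigma\cdot\boldsymbol n-Q_b(\sigma\cdot\boldsymbol n),\,Q_b v_0-v_0\rangle_{\partial K}.$$
Subtracting the discrete equations \eqref{equ_hy1}--\eqref{equ_hy2} gives $a_h(\sigma-\sigma_h,\tau_h)+b_h(\tau_h,\{u,\mathrm{tr}\,u\}-u_h)=0$ and $b_h(\sigma-\sigma_h,v_h)=E_c(v_h)$.

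Finally I would invoke stability. Since the hybridized form satisfies the continuity, coercivity and inf-sup conditions proved above, the Babu\v ska--Brezzi theory provides a combined inf-sup constant $\gamma>0$ for $a_h(\sigma,\tau)+b_h(\tau,u)+b_h(\sigma,v)$ on $\Sigma_h^{-1}\times M_h^0$. Applying it to $(e_\sigma,e_u)$, testing against $(\tau_h,v_h)$, and substituting the discrete equations together with the two consistency identities, the numerator reduces to
$$a_h(\sigma_I-\sigma,\tau_h)+b_h(\tau_h,u_I-\{u,\mathrm{tr}\,u\})+b_h(\sigma_I-\sigma,v_h)+E_c(v_h).$$
The first three terms are controlled by the interpolation estimates and the continuity of $a_h,b_h$, each contributing $h^{k+1}(\|\sigma\|_{k+1}+\|u\|_{k+2})$. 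The remaining term $E_c(v_h)$ is the heart of the argument: I would bound it by Cauchy--Schwarz, using $\|Q_b v_0-v_0\|_{\partial K}\le\|v_0-v_b\|_{\partial K}\lesssim h_K^{1/2}\|v_h\|_{1,h}$ and, since only $\sigma\in H^{k+1}$ is available, estimating $\|\sigma\cdot\boldsymbol n-Q_b(\sigma\cdot\boldsymbol n)\|_F$ through a volume polynomial approximation and the trace inequality to obtain $h_F^{k+1/2}\|\sigma\|_{k+1,\omega_F}$; with $h_F\eqsim h_K$ from (A2) this yields $|E_c(v_h)|\lesssim h^{k+1}\|\sigma\|_{k+1}\|v_h\|_{1,h}$. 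Combining these bounds with the combined inf-sup gives $\|e_\sigma\|_{0,h}+\|e_u\|_{1,h}\lesssim h^{k+1}(\|\sigma\|_{k+1}+\|u\|_{k+2})$, and the triangle inequality then produces \eqref{eq:energyerror}. The main obstacle is precisely this last step—securing the sharp $h^{k+1}$ consistency scaling for the normal-flux projection error under the minimal regularity $\sigma\in H^{k+1}$, rather than the naive half-order loss one would incur from a direct face estimate.
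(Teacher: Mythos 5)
Your proposal is correct and follows essentially the same route as the paper: compare $(\sigma_h,u_h)$ with the interpolant $\sigma_I$ of \eqref{eq:interpolant} and the $L^2$-projection $Q_hu=\{Q_0u,Q_bu\}$, derive an error equation, bound its right-hand side using the orthogonality $\langle\tau_h\cdot\boldsymbol n,Q_bu-u\rangle_F=0$, integration by parts with the interelement continuity of $\sigma\cdot\boldsymbol n$, and the face estimate $h_K^{1/2}\|(\sigma-\sigma_I)\cdot\boldsymbol n\|_{\partial K}\lesssim h^{k+1}\|\sigma\|_{k+1,K}$ (equivalently your bound on $\|\sigma\cdot\boldsymbol n-Q_b(\sigma\cdot\boldsymbol n)\|_F$ via a volume approximant and the trace inequality), and then invoke the Babu\v ska--Brezzi stability of \eqref{equ_hy}. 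The only cosmetic difference is that you package the perturbation as a Strang-type consistency functional and control $b_h(\sigma_I-\sigma,v_h)$ by continuity plus \eqref{eqn_est_sigma_I}, whereas the paper substitutes $\sigma_I$ and $Q_hu$ directly into the bilinear forms and uses the moment condition \eqref{eq:interpolantdofPk} to cancel the volume term $(\sigma_I-\sigma,\nabla v_0)_K$; both yield the same $h^{k+1}$ rate.
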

\begin{proof}
Let $Q_h=\{Q_0,Q_b\}: L^{2}(\Omega)\times \Pi_{F\in\mathcal{F}_h}L^2(F)\rightarrow M_h$ be the $L^2$-projection operator. We first derive an error equation on the difference $\sigma_h-\sigma_I$ and $u_h-Q_hu$. By \eqref{equ_hy},
\begin{equation}\label{errores1}
\begin{aligned}
 &a_h(\sigma_h-\sigma_I, \tau_h)+b_h(\tau_h, u_h-Q_hu)
 \\
&=a_h(\sigma_h,\tau_h)+b_h(\tau_h,u_h)-a_h(\sigma_I,\tau_h)-b_h(\tau_h,Q_hu)
 \\
&=-(\sigma_I,\tau_h)+(\tau_h, \nabla_w(Q_hu)),
\end{aligned}
\end{equation}
and 
\begin{equation}\label{errores2}
    b_h(\sigma_h-\sigma_I,v_h)= -(f,v_0)+(\sigma_I, \nabla_wv_h).
\end{equation}
Now the right hand side involves $\sigma_I$ and $Q_hu$ but not $\sigma_h$ and $u_h$.

Using \eqref{equ_mixed_poisson1}, the Cauchy-Schwarz inequality, the projection inequality, and (\ref{eqn_est_sigma_I}), we have
\begin{align*}
&-(\sigma_I,\tau_h)+(\tau_h, \nabla_w(Q_hu))\\
={}& (\sigma,\tau_h)-(\nabla u,\tau_h)  - (\sigma_I,\tau_h)
  +\sum_{K\in\mathcal{K}_h}\left\{(\tau_h,\nabla Q_0 u)_K-\langle \tau_h\cdot\boldsymbol{n},Q_0u-Q_bu\rangle_{\partial
K}\right\}
 \\
={}&(\sigma-\sigma_I,\tau_h)
 +\sum_{K\in\mathcal{K}_h} \left \{(\tau_h,\nabla Q_0u-\nabla u)_K-\langle \tau_h\cdot\boldsymbol{n},Q_0u-u\rangle_{\partial
K}\right \}
 \\
\leq {}& \|\sigma-\sigma_I\|\|\tau_h\|+\|\tau_h\|\Big(\sum_{K\in\mathcal K_h}\|\nabla Q_0u-\nabla u\|_K^2\Big)^{\frac12}
   \\
&  + \Big (\sum_{F\in\mathcal{F}_h^K} h_F^{-1}\|Q_0u-u\|_{F}^2  \Big )^{\frac12}
  \Big (\sum_{F\in\mathcal{F}_h^K} h_F \|\tau_h\cdot \boldsymbol{n}\|_{F}^2  \Big )^{\frac12}
     \\
 \lesssim{}&  h^{k+1}(\|\sigma\|_{k+1}+\|u\|_{k+2})\|\tau_h\|_{0,h}.
\end{align*}

  Similarly, using \eqref{equ_mixed_poisson2}, \eqref{eq:interpolantdofPk}, the Cauchy-Schwarz inequality, the projection inequality, the trace inequality, and (\ref{eqn_est_sigma_I}), we have
  \begin{equation*}
\begin{aligned}
&-(f,v_0)+(\sigma_I, \nabla_wv_h)  \\
= {}&(\nabla\cdot\sigma,v_0)+\sum_{K\in\mathcal{K}_h}\left\{(\sigma_I,\nabla v_0)_K-\langle \sigma_I\cdot\boldsymbol{n},v_0-v_b\rangle_{\partial
K}\right\}
 \\
  ={} &\sum_{K\in\mathcal{K}_h}\left\{(\sigma_I-\sigma,\nabla v_0)_K+\langle \sigma\cdot\boldsymbol{n}-\sigma_I\cdot\boldsymbol{n},v_0-v_b\rangle_{\partial
K}\right\}
 \\
\lesssim {}& \|v_h\|_{1,h}
\Big (\sum_{K\in\mathcal{K}_h} h_K\|\sigma\cdot\boldsymbol{n}-\sigma_I\cdot\boldsymbol{n}\|_{\partial K}^2  \Big )^{\frac12}
   \\
  % \leq {}&Ch^{k+1}\|\sigma\|_{k+1}\|v_h\|_{1,h}+\|v_h\|_{1,h}
  % \Big (\sum_{K\in\mathcal{K}_h} \|\sigma-\sigma_I\|_{K}^2 +h^2  \|\nabla(\sigma-\sigma_I)\|_{K}^2 \Big )^{\frac12}
  %  \\
  \lesssim {}& h^{k+1}\|\sigma\|_{k+1}\|v_h\|_{1,h}.
\end{aligned}
\end{equation*}
Then we have from \eqref{eq:hystability}, \eqref{errores1}, and \eqref{errores2} that
$$
\| \sigma_I - \sigma_h \| +\|u_h-Q_hu\|_{1,h}\lesssim h^{k+1}(\|\sigma\|_{k+1}+\|u\|_{k+2}).
$$
Together with the triangular inequality and the interpolation error estimate, we finish the proof.
\end{proof}

Next, we estimate $\|u-u_0\|$. Consider the dual problem
\begin{equation}\label{eq:dualproblem}
\tilde{\sigma}=\nabla \tilde{u},\; -\div\tilde{\sigma}=u-u_0\quad \textrm{ in } \Omega ; \quad \tilde{u}|_{\partial\Omega}=0.
\end{equation}
Assume we have the regularity
\begin{equation}\label{eq:dualregularity}
\|\tilde\sigma\|_1+\|\tilde u\|_2 \lesssim \|u-u_0\|_0.
\end{equation}
The regularity \eqref{eq:dualregularity} holds when $\Omega$ is a two-dimensional convex domain \cite{Dauge1988}. 

\begin{theorem}
Let $u\in H^{k+2}(\Omega)$ and $\sigma\in H^{k+1}(\Omega; \mathbb{R}^d)$ be the solution of \eqref{equ_mixed_poisson}, $k\geq 0$. Let $u_h\in M_h^0$ and $\sigma_h\in\Sigma_h$ be the solution of \eqref{equ_hy}.
Assume the regularity~\eqref{eq:dualregularity} holds.
We have 
\begin{equation}\label{eq:uL2error}
\|u-u_0\|\lesssim h^{k+2}\|u\|_{k+2}.
\end{equation}
\end{theorem}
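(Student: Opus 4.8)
The plan is an Aubin--Nitsche duality argument. Testing the dual problem \eqref{eq:dualproblem} against the scalar error and using $-\div\tilde\sigma=u-u_0$, integration by parts on each $K\in\mathcal K_h$ gives
\begin{equation*}
\|u-u_0\|^2=\sum_{K\in\mathcal K_h}(\tilde\sigma,\nabla(u-u_0))_K+\sum_{K\in\mathcal K_h}\langle\tilde\sigma\cdot\boldsymbol n,u_0-u_b\rangle_{\partial K},
\end{equation*}
where I use that $u\in H_0^1(\Omega)$ and $\tilde\sigma=\nabla\tilde u\in H^1(\Omega;\mathbb R^d)$ have single-valued traces and $u|_{\partial\Omega}=0$, so the exact-solution boundary contribution telescopes to zero, and that $u_b$ is single-valued on interior faces and vanishes on $\partial\Omega$. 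Splitting $\tilde\sigma=\tilde\sigma_I+(\tilde\sigma-\tilde\sigma_I)$ with $\tilde\sigma_I\in\Sigma_h$ the interpolant of $\tilde\sigma$ defined as in \eqref{eq:interpolant}, the defect part is bounded by $\|\tilde\sigma-\tilde\sigma_I\|_{0,h}\|u-u_h\|_{1,h}$; by \eqref{eqn_est_sigma_I}, the regularity \eqref{eq:dualregularity}, and the energy estimate \eqref{eq:energyerror} this is $\lesssim h\|u-u_0\|\cdot h^{k+1}(\|\sigma\|_{k+1}+\|u\|_{k+2})$, already of the target order.

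The essential contribution is the $\tilde\sigma_I$ part. Using $\nabla u=\sigma$, that $\tilde\sigma_I\cdot\boldsymbol n|_F\in\mathbb P_k(F)$ (so $Q_b$ may be inserted at no cost), and the definition of $b_h$, it equals $(\tilde\sigma_I,\sigma)+b_h(\tilde\sigma_I,u_h)$; testing \eqref{equ_hy1} with $\tau_h=\tilde\sigma_I$ turns $b_h(\tilde\sigma_I,u_h)$ into $-(\sigma_h,\tilde\sigma_I)$, so the whole term collapses to $(\tilde\sigma_I,\sigma-\sigma_h)$. I then write $\sigma-\sigma_h=(\sigma-\sigma_I)+(\sigma_I-\sigma_h)$. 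The first piece gains a power of $h$ because the interpolation condition \eqref{eq:interpolantdofPk} makes $\sigma-\sigma_I$ orthogonal to $\mathbb P_k(K;\mathbb R^d)$, so $\tilde\sigma_I$ may be replaced by $\tilde\sigma_I$ minus its local $\mathbb P_k$-projection, of size $O(h\|\tilde\sigma\|_1)$; together with \eqref{eqn_est_sigma_I} and \eqref{eq:dualregularity} this yields $|(\tilde\sigma_I,\sigma-\sigma_I)|\lesssim h^{k+2}\|\sigma\|_{k+1}\|u-u_0\|$.

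The remaining piece $(\tilde\sigma_I,\sigma_I-\sigma_h)=-a_h(\sigma_h-\sigma_I,\tilde\sigma_I)$ is the crux, and here I would introduce the discrete dual solution $(\tilde\sigma_h,\tilde u_h)\in\Sigma_h\times M_h^0$ of \eqref{equ_hy} with data $u-u_0$; applying \eqref{eq:energyerror} to the (only $H^2$-regular) dual problem gives $\|\tilde\sigma-\tilde\sigma_h\|_{0,h}+\|\tilde u-\tilde u_h\|_{1,h}\lesssim h\|u-u_0\|$. Inserting $\tilde\sigma_h$, I split $a_h(\sigma_h-\sigma_I,\tilde\sigma_I)=a_h(\sigma_h-\sigma_I,\tilde\sigma_h)-a_h(\sigma_h-\sigma_I,\tilde\sigma_h-\tilde\sigma_I)$. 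The first term is rewritten through the dual analogue of \eqref{equ_hy1} as $-b_h(\sigma_h-\sigma_I,\tilde u_h)$ and then, by the primal error equation \eqref{errores2}, identified with the residual $-(f,\tilde u_0)+(\sigma_I,\nabla_w\tilde u_h)$; the second term is treated by the dual analogue of \eqref{errores1} together with \eqref{errores2}. Collecting all estimates gives $\|u-u_0\|^2\lesssim h^{k+2}(\|\sigma\|_{k+1}+\|u\|_{k+2})\|u-u_0\|$, and dividing and using $\|\sigma\|_{k+1}\lesssim\|u\|_{k+2}$ yields \eqref{eq:uL2error}.

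I expect the main obstacle to be twofold. First, the chain must be organized so that $(\tilde\sigma_I,\sigma_I-\sigma_h)$ is never circularly re-expressed through itself: routing $a_h(\sigma_h-\sigma_I,\tilde\sigma_h)$ through the dual equation \eqref{equ_hy1} rather than the primal \eqref{errores1} is precisely what breaks the loop. Second, and more delicately, every residual must gain a \emph{full} extra power of $h$ (from $h^{k+1}$ to $h^{k+2}$). In $-(f,\tilde u_0)+(\sigma_I,\nabla_w\tilde u_h)=\sum_K[(\sigma_I-\sigma,\nabla\tilde u_0)_K+\langle(\sigma-\sigma_I)\cdot\boldsymbol n,\tilde u_0-\tilde u_b\rangle_{\partial K}]$ the volume term vanishes identically, since $\nabla\tilde u_0\in\mathbb P_k(K;\mathbb R^d)$ is annihilated by the orthogonality \eqref{eq:interpolantdofPk}, while the face term gains its extra $h$ from the smallness of the consistency jump $\tilde u_0-\tilde u_b$, which is $O(h\|u-u_0\|)$ in the $\|\cdot\|_{1,h}$ jump seminorm as a consequence of the dual energy estimate and projection bounds. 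Verifying these orthogonality- and trace-based gains uniformly for all cross terms is the technical heart of the argument.
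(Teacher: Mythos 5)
Your argument is correct and, for its first half, coincides with the paper's: the same duality setup, the same replacement of $u$ by $u_b$ on element boundaries, the same splitting $\tilde\sigma=\tilde\sigma_I+(\tilde\sigma-\tilde\sigma_I)$, and the same use of \eqref{equ_hy1} with $\tau_h=\tilde\sigma_I$ to collapse the main term to $(\tilde\sigma_I,\sigma-\sigma_h)$. Where you genuinely diverge is in the treatment of that remaining term. The paper does \emph{not} introduce a discrete dual solution: it bounds $(\sigma-\sigma_h,\tilde\sigma)$ directly by inserting the $L^2$-projection $Q_h\tilde u$ of the \emph{continuous} dual solution, invoking \eqref{equ_hy2} with $v_h=Q_h\tilde u$ to replace $b_h(\sigma_h,Q_h\tilde u)$ by $-(f,Q_0\tilde u)$, and then integrating by parts so that everything reduces to $(\sigma-\sigma_h,\nabla_h(\tilde u-Q_0\tilde u))$ plus boundary terms in $Q_0\tilde u-\tilde u$, each of which contributes the extra factor $h\|\tilde u\|_2$ via projection and trace estimates. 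Your route instead splits off $(\tilde\sigma_I,\sigma-\sigma_I)$ by the orthogonality \eqref{eq:interpolantdofPk} (a correct and slightly sharper observation than the paper's, which simply swaps $\tilde\sigma_I$ for $\tilde\sigma$ at the cost of an interpolation error) and then attacks $a_h(\sigma_h-\sigma_I,\tilde\sigma_I)$ through the discrete dual solution, the dual analogue of \eqref{equ_hy1}, and the error equation \eqref{errores2}. This works, and your identification of the vanishing volume term via \eqref{eq:interpolantdofPk} and of the $O(h\|u-u_0\|)$ jump seminorm of $\tilde u_h$ is exactly the right mechanism. The price you pay is an extra ingredient the paper never needs: the energy estimate \eqref{eq:energyerror} applied to the dual problem under only $H^2\times H^1$ regularity. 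As stated, \eqref{eq:energyerror} assumes $u\in H^{k+2}$, $\sigma\in H^{k+1}$ and delivers rate $h^{k+1}$; for $k\ge 1$ you must re-run its proof with regularity index $1$ to obtain the first-order bound $\|\tilde\sigma-\tilde\sigma_h\|_{0,h}+\|\tilde u-\tilde u_h\|_{1,h}\lesssim h\|u-u_0\|$ that your argument relies on. This low-regularity variant is routine but should be stated as a separate lemma; the paper's one-sided duality avoids it entirely and is correspondingly shorter.
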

\begin{proof}
From \eqref{eq:dualproblem}, the integration by parts and \eqref{equ_hy1}, it follows
\begin{align*}
&\|u-u_0\|^2=-(\div\tilde{\sigma}, u-u_0)=(\tilde{\sigma}, \nabla_h(u-u_0)) -\sum_{K\in\mathcal K_h}\langle\tilde{\sigma}\cdot\boldsymbol{n}, u-u_0\rangle_{\partial K} \\
&\quad=(\tilde{\sigma}-\tilde{\sigma}_I, \nabla_h(u-u_0)) -\sum_{K\in\mathcal K_h}\langle(\tilde{\sigma}-\tilde{\sigma}_I)\cdot\boldsymbol{n}, u_b-u_0\rangle_{\partial K}  + (\sigma,\tilde{\sigma}_I) + b_h(\tilde{\sigma}_I, u_h) \\
&\quad=(\tilde{\sigma}-\tilde{\sigma}_I, \nabla_h(u-u_0)) -\sum_{K\in\mathcal K_h}\langle(\tilde{\sigma}-\tilde{\sigma}_I)\cdot\boldsymbol{n}, u_b-u_0\rangle_{\partial K}  + (\sigma-\sigma_h,\tilde{\sigma}_I).
\end{align*}
Apply \eqref{eqn_est_sigma_I} and \eqref{eq:energyerror}  to get
\begin{equation*}
\|u-u_0\|^2-(\sigma-\sigma_h,\tilde{\sigma})\lesssim h^{k+2}\|u\|_{k+2}\|\tilde\sigma\|_1.
\end{equation*}
Using \eqref{equ_hy2}, the integration by parts, and \eqref{eq:energyerror}, we have
\begin{align*}
(\sigma-\sigma_h,\tilde{\sigma})&=(\sigma-\sigma_h,\nabla_h(\tilde{u}-Q_0\tilde{u})) + (\sigma,\nabla_h(Q_0\tilde{u})) \\
&\quad + b_h(\sigma_h,Q_h\tilde{u})- \sum_{K\in\mathcal K_h}\langle\sigma_h\cdot\boldsymbol{n}, Q_0\tilde{u}\rangle_{\partial K} \\
&=(\sigma-\sigma_h,\nabla_h(\tilde{u}-Q_0\tilde{u})) + (\sigma,\nabla_h(Q_0\tilde{u})) - (f,Q_0\tilde{u}) \\
&\quad - \sum_{K\in\mathcal K_h}\langle\sigma_h\cdot\boldsymbol{n}, Q_0\tilde{u}\rangle_{\partial K} \\
% &=(\sigma,\nabla\tilde{u}) - (\sigma_h,\nabla_h(\tilde{u}-Q_0\tilde{u})) - (f,Q_0\tilde{u})- \sum_{K\in\mathcal K_h}\langle\sigma_h\cdot\boldsymbol{n}, Q_0\tilde{u}-\tilde{u}\rangle_{\partial K} \\
&=(\sigma-\sigma_h,\nabla_h(\tilde{u}-Q_0\tilde{u})) + \sum_{K\in\mathcal K_h}\langle(\sigma-\sigma_h)\cdot\boldsymbol{n}, Q_0\tilde{u}-\tilde{u}\rangle_{\partial K} \\
&\lesssim h\|\sigma-\sigma_h\|_{0,h}\|\tilde{u}\|_2\lesssim h^{k+2}\|u\|_{k+2}\|\tilde{u}\|_2.
\end{align*}
Finally, we conclude \eqref{eq:uL2error} from the last two inequalities and the regularity \eqref{eq:dualregularity}.
\end{proof}

\section{Connections to Other Methods}\label{sec:connection}
In this section, we investigate some connections between our SDG method and other methods, such as Crouzeix-Raviart (CR) linear non-conforming finite element on simplices, stabilization-free weak gradient (WG) method, and stabilization-free non-conforming virtual element methods (VEM). One benefit of stabilization free is the local conservation property. In traditional WG and VEM, the stabilizer will destroy the local conservation; see Remark \ref{rm:conservation}. 

\subsection{Nonconforming linear element method on simplices}\label{subsec:conn2CR}
In this subsection, we take $k=0$, and assume each element $K\in\mathcal{K}_h$ is a simplex which is automatically true in two dimensions and can hold by further refining each face $F$ into $(d-1)$-dimensional simplices, see Fig. \ref{fig_refineF} as an example. 
\begin{figure}[htbp]
\label{fig_refineF}
\subfigure[An element $K\in\mathcal{K}_h$ in 3D.]{
\begin{minipage}[t]{0.45\linewidth}
\centering
\includegraphics*[width=3.25cm]{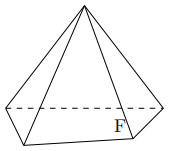}
\end{minipage}}%%
\subfigure[Refine $F$ into triangles.]
{\begin{minipage}[t]{0.45\linewidth}
\centering
\includegraphics*[width=3.25cm]{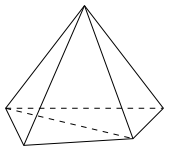}
\end{minipage}}
\caption{ Refine an element $K$ into simplices.}
\end{figure} 
Now on the refined mesh $\mathcal T_h$, we can define the CR nonconforming linear element space 
\begin{align*}
U_h^{\rm CR}:=\{v_h\in L^2(\Omega):&\, v_h|_T\in \mathbb P_1(T) \textrm{ for } T\in  \mathcal T_h, \,\int_F[v_h] \dd S =0  \textrm{ on } F\in \mathring{\mathcal{F}}_h^{T} \},
\end{align*}
and its subspace $\mathring{U}_h^{\rm CR} = \{v_h\in U_h^{\rm CR}, \int_F v_h \dd S = 0 \text{ on } F \in \mathcal{F}_h^{T}\cap \partial \Omega\}$. 
A function $v_h\in U_h^{\rm CR}$ is uniquely determined by the average of function on each face $F\in \mathcal F_h^T$. 

Introduce a connection operator $E_h: M_h\to U_h^{\rm CR}$ as follows: for $v_h=(v_0,v_b)\in M_h$, define $E_h v_h \in U_h^{\rm CR}$ such that
\begin{align*}
\int_F (E_hv_h) \dd S&= \int_F v_b \dd S \quad\quad \textrm{ on } F\in \mathcal{F}_h^{K},\\
\int_F (E_hv_h) \dd S &= \int_F v_0 \dd S \quad\;\;\; \textrm{ on } F\in \mathcal{F}_h^T \setminus \mathcal{F}_h^{K}.
\end{align*}

\begin{lemma}
Let $k=0$.
Assume each element $T\in\mathcal{T}_h$ is a simplex. For $v_h=\{ v_0, v_b \}\in M_h$,
we have 
\begin{equation}\label{equivWeakgradCR}
\nabla_w v_h=\nabla_h(E_hv_h).
\end{equation}  
% Here $\nabla_h$ is the elementwise gradient operator with respect to $\mathcal T_h$.
\end{lemma}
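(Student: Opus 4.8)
The plan is to verify the identity locally on each sub-simplex $T\in\mathcal T_h(K)$, exploiting that for $k=0$ both sides are constant vectors on $T$ and that $\Sigma_h^{-1}(K)$ consists of sub-simplexwise constant vector fields. Concretely, I would test the defining relation \eqref{eq:wg} of $\nabla_{w,K}v_h$ with $\tau=\boldsymbol c\,\chi_{T}$ for an arbitrary constant vector $\boldsymbol c\in\mathbb R^d$. Writing $\boldsymbol g_T:=(\nabla_w v_h)|_T$ and using that $v_0|_K\in\mathbb P_1(K)$ has constant gradient and that $v_b|_F$ is constant for $k=0$, this collapses to
$$
|T|\,\boldsymbol g_T\cdot\boldsymbol c=|T|\,\nabla v_0\cdot\boldsymbol c+(\boldsymbol c\cdot\boldsymbol n^F)\Big(\int_F v_b\,\dd S-\int_F v_0\,\dd S\Big),
$$
where $F=\partial T\cap\partial K$ is the unique face of $T$ lying on $\partial K$, while the remaining $d$ faces of $T$ are interior dual faces; here $\boldsymbol n^F$ is simultaneously the outward normal of $K$ and of $T$ on $F$.

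Next I would rewrite $\nabla v_0$ through the component-wise divergence theorem on the simplex $T$: for any linear polynomial $p$ on $T$ one has $|T|\nabla p=\sum_e\big(\int_e p\,\dd S\big)\boldsymbol n_e$, the sum ranging over all faces $e$ of $T$ with outward normals $\boldsymbol n_e$. Substituting this for $v_0$, the two contributions carrying $\int_F v_0$ (one from $\nabla v_0$ on $F$, one from the correction term) cancel exactly, leaving
$$
|T|\,\boldsymbol g_T=\Big(\int_F v_b\,\dd S\Big)\boldsymbol n^F+\sum_{F'}\Big(\int_{F'} v_0\,\dd S\Big)\boldsymbol n_{F'},
$$
where $F'$ runs over the interior faces of $T$.

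Finally I would apply the same simplex identity to $w_h:=(E_hv_h)|_T\in\mathbb P_1(T)$, giving $|T|\,\nabla_h(E_hv_h)|_T=\sum_e\big(\int_e w_h\,\dd S\big)\boldsymbol n_e$. By the definition of the connection operator $E_h$, the face integral of $w_h$ equals $\int_F v_b$ on the boundary face $F\in\mathcal F_h^K$ and $\int_{F'} v_0$ on each interior face $F'\in\mathcal F_h^T\setminus\mathcal F_h^K$; this reproduces precisely the right-hand side above, so $\boldsymbol g_T=\nabla_h(E_hv_h)|_T$ on every $T$, which is \eqref{equivWeakgradCR}.

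The only genuinely delicate point is the cancellation of the boundary-face term: one must notice that the trace $v_0$ on $F$ enters both $\nabla v_0$ and the correction $\langle v_b-v_0,\tau\cdot\boldsymbol n\rangle_{\partial K}$, and that these cancel so that the interior datum $v_0$ is replaced on $F$ by the single-valued Lagrange multiplier $v_b$ — which is exactly what is needed to match the CR face average there. Once the correct local test field $\boldsymbol c\,\chi_T$ and the simplex gradient--average identity are in place, the rest is a direct computation.
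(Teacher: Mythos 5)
Your proof is correct and is essentially the paper's argument: both reduce everything to face averages by integrating by parts against piecewise-constant test fields (so no volume terms survive), observe that the boundary correction $\langle v_b-v_0,\tau\cdot\boldsymbol n\rangle_{\partial K}$ swaps the trace of $v_0$ for $v_b$ on $\partial K$, and then invoke the defining face-average property of $E_h$ to identify the result with $(\nabla_h(E_hv_h),\tau)_K$. The only difference is organizational: you localize to one sub-simplex at a time with $\tau=\boldsymbol c\,\chi_T$, which lets you avoid the jump terms $[\tau\cdot\boldsymbol n]$ on interior dual faces that appear in the paper's element-wise version of the same computation.
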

\begin{proof}
For $\tau\in\Sigma_h^{-1}(K)$ and $K\in \mathcal K_h$, It follows from the integration by parts and the definition of $E_hv_h$ that
\begin{align}
\label{weakgradgradCR}
(\nabla_{w} v_h,\tau)_K &=(\nabla v_0, \tau)_K+\langle v_b-v_0,\tau\cdot\boldsymbol{n}\rangle_{\partial K} \\
\notag
&=\sum_{F\in \mathcal{F}_h^T(K)\setminus \partial K} \langle v_0, [\tau\cdot\boldsymbol{n}]\rangle_{F}+\langle v_b,\tau\cdot\boldsymbol{n}\rangle_{\partial K} \\
\notag
&=\sum_{F\in \mathcal{F}_h^T(K)\setminus \partial K} \langle E_hv_h,  [\tau\cdot\boldsymbol{n}]\rangle_{F}+\langle E_hv_h,\tau\cdot\boldsymbol{n}\rangle_{\partial K} \\
\notag
&=(\nabla_h(E_hv_h),\tau)_K.
\end{align}
Therefore, \eqref{equivWeakgradCR} is true.
\end{proof}

By \eqref{equivWeakgradCR}, the connection operator $E_h: M_h\to U_h^{\rm CR}$ is injective, but in general not bijective as $\dim M_h\leq\dim U_h^{\rm CR}$. In other words, comparing to use CR element on the sub-triangulation $\mathcal T_h$, $M_h$ has fewer DoFs. When each element $K\in\mathcal{K}_h$ is a triangle in two dimensions, $E_h$ is bijective, as $\dim M_h=\dim U_h^{\rm CR}$; cf. Fig.~\ref{fig:P0P1CR} (a). 
\begin{figure}[htbp]
\label{fig:P0P1CR}
\begin{centering}
\subfigure[Equivalence of $P_0$-$P_1$ SDG to CR element on barycentric refinement of triangles.]{
\begin{minipage}[t]{0.5\linewidth}
\centering
\includegraphics*[width=6.5cm]{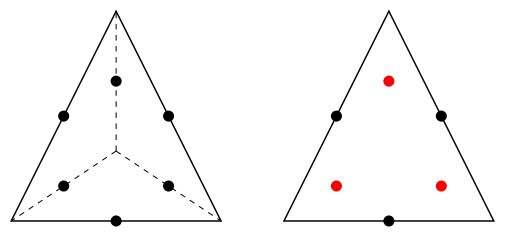}
\end{minipage}}%%
\qquad
\subfigure[$P_0$-$P_1$ SDG on a quadrilateral.]
{\begin{minipage}[t]{0.35\linewidth}
\centering
\includegraphics*[width=3.5cm]{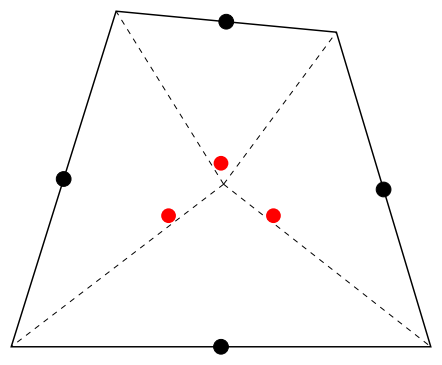}
\end{minipage}}
\caption{Connection of $P_0$-$P_1$ SDG and CR non-conforming linear finite elements.}
\end{centering}
\end{figure}

%\begin{figure}[htbp]
%\begin{center}
%\includegraphics[width=7cm,height=3cm]{figures/P0P1CR}
%\caption{Equivalence of P0-P1 SDG to CR element on barycentric refinement of triangles.}
%\label{fig:P0P1CR}
%\end{center}
%\end{figure}

There are several works~\cite{Han1984,RannacherTurek1992,CaiDouglasSantosSheenEtAl2000,LinTobiskaZhou2005,ParkSheen2003,HuShi2005,AyusodeDiosLipnikovManzini2016,Wang2019} on extension of linear non-conforming finite element to quadrilaterals and the focus is a suitable definition of shape functions. Now by introducing a separate $u_0\in \mathbb P_1(K)$ as the shape function inside $K$ and using weak gradient instead of element-wise gradient, the hybridized $P_0$-$P_1$ SDG can be thought as a generalization of linear non-conforming finite element to general polytopes; cf. Fig.~\ref{fig:P0P1CR} (b). 

\begin{lemma}
Let $k=0$.
Assume each element $T\in\mathcal{T}_h$ is a simplex. For $v_h=\{v_0, v_b\}\in M_h^0$ and $K\in\mathcal K_h$,
we have 
\begin{equation}\label{computgradv0}
(\nabla v_0,\tau)_K=(\nabla_h(E_hv_h),\tau)_K \quad \forall~\tau\in{\rm span}\{\boldsymbol{t}_{\ell}\chi_{T_{\boldsymbol{t}_{\ell}}}, \ell=1,\ldots,d\},
\end{equation}  
where $T_{\boldsymbol{t}_\ell}\in\mathcal{T}_h(K)$ is the subelement with tangent vector $\boldsymbol{t}_\ell$ chosen in \eqref{equ_edge_vector_property}. Then on each $K\in\mathcal K_h$,
\begin{equation}\label{explicitcomputgradv0}
\nabla v_0=\sum_{\ell=1}^d\hat{\boldsymbol{t}}_{\ell}(\boldsymbol{t}_{\ell}\cdot\nabla((E_hv_h)|_{T_{\boldsymbol{t}_{\ell}}})).
\end{equation}
Here $\{\hat{\boldsymbol{t}}_\ell, \ell=1,...,d\}$ be the dual bases of $\{\boldsymbol{t}_\ell, \ell=1,...,d\}$.
\end{lemma}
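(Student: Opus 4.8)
The plan is to derive both identities directly from the weak-gradient characterization, exploiting the key fact $\nabla_w v_h=\nabla_h(E_hv_h)$ already established in \eqref{equivWeakgradCR}. First I would recall from \eqref{eq:wg} that for any $\tau\in\Sigma_h^{-1}(K)$,
$$
(\nabla_{w,K} v_h,\tau)_K=(\nabla v_0,\tau)_K+\langle v_b-v_0,\tau\cdot\boldsymbol{n}\rangle_{\partial K}.
$$
The crux of \eqref{computgradv0} is to show that the boundary term drops out whenever $\tau$ lies in ${\rm span}\{\boldsymbol{t}_\ell\chi_{T_{\boldsymbol{t}_\ell}},\ell=1,\ldots,d\}$. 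By linearity it suffices to treat $\tau=\boldsymbol{t}_\ell\chi_{T_{\boldsymbol{t}_\ell}}$, which indeed belongs to $\Sigma_h^{-1}(K)$ since for $k=0$ it is a constant vector on $T_{\boldsymbol{t}_\ell}$ and zero elsewhere. Such a $\tau$ is supported on the single subelement $T_{\boldsymbol{t}_\ell}$, which meets $\partial K$ only along the face $F_i$ for which $\boldsymbol{t}_\ell=\boldsymbol{t}_j^{F_i}$ is a tangent vector. Since $\boldsymbol{t}_\ell\cdot\boldsymbol{n}^{F_i}=0$ there, we get $\tau\cdot\boldsymbol{n}|_{\partial K}=0$, so the boundary integral vanishes. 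Combining this with \eqref{equivWeakgradCR} yields $(\nabla v_0,\tau)_K=(\nabla_{w,K} v_h,\tau)_K=(\nabla_h(E_hv_h),\tau)_K$, which is \eqref{computgradv0}.

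For the explicit formula \eqref{explicitcomputgradv0}, I would use that $k=0$ forces both sides to be constant on each subelement. Taking $\tau=\boldsymbol{t}_\ell\chi_{T_{\boldsymbol{t}_\ell}}$ in \eqref{computgradv0}, the vector $\nabla v_0$ is constant on $K$ and $\nabla((E_hv_h)|_{T_{\boldsymbol{t}_\ell}})$ is constant on $T_{\boldsymbol{t}_\ell}$ (as $E_hv_h\in\mathbb{P}_1$ there), so both volume integrals reduce to the respective value times $|T_{\boldsymbol{t}_\ell}|$. Cancelling this common positive factor gives the scalar identities
$$
\boldsymbol{t}_\ell\cdot\nabla v_0=\boldsymbol{t}_\ell\cdot\nabla((E_hv_h)|_{T_{\boldsymbol{t}_\ell}}),\quad \ell=1,\ldots,d.
$$
Finally, because $\{\boldsymbol{t}_\ell\}$ spans $\mathbb{R}^d$ with dual basis $\{\hat{\boldsymbol{t}}_\ell\}$ satisfying $\hat{\boldsymbol{t}}_i\cdot\boldsymbol{t}_j=\delta_{ij}$, every vector $\boldsymbol{w}$ admits the expansion $\boldsymbol{w}=\sum_\ell(\boldsymbol{w}\cdot\boldsymbol{t}_\ell)\hat{\boldsymbol{t}}_\ell$; applying this to $\boldsymbol{w}=\nabla v_0$ and substituting the scalar identities produces \eqref{explicitcomputgradv0}.

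I expect the main obstacle to be the careful geometric verification that $\tau\cdot\boldsymbol{n}$ vanishes on the whole of $\partial K$, not merely on $F_i$: one must confirm that the support $T_{\boldsymbol{t}_\ell}$ touches $\partial K$ only through the face $F_i$ (the remaining faces of $T_{\boldsymbol{t}_\ell}$ being interior to $K$, as they connect $\boldsymbol{x}_K$ to the vertices of $F_i$), and that $\boldsymbol{t}_\ell$ being tangent to $F_i$ indeed means $\boldsymbol{t}_\ell\cdot\boldsymbol{n}^{F_i}=0$. Once this is settled, the remainder is routine bookkeeping with the dual basis, and the $k=0$ assumption makes the reduction from the integral identity \eqref{computgradv0} to the pointwise relation \eqref{explicitcomputgradv0} immediate.
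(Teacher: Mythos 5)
Your proof is correct and follows essentially the same route as the paper: the boundary term in the weak-gradient identity vanishes because $\tau\cdot\boldsymbol{n}|_{\partial K}=0$ for tangential $\tau$ supported on a single subelement, and \eqref{computgradv0} then follows from the computation behind \eqref{equivWeakgradCR}. You additionally spell out the dual-basis derivation of \eqref{explicitcomputgradv0}, which the paper leaves implicit; that part is also correct.
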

\begin{proof}
For $\tau\in{\rm span}\{\boldsymbol{t}_{\ell}\chi_{T_{\boldsymbol{t}_{\ell}}}, \ell=1,\ldots,d\}$, it holds $\tau \cdot \boldsymbol{n}|_{\partial K}=0$. Then \eqref{computgradv0} following from 
\eqref{weakgradgradCR}.
\end{proof}

By \eqref{explicitcomputgradv0}, we can compute $\nabla v_0$ from $E_hv_h$, then recover $v_0$.

\begin{lemma}
Let $u_h\in M_h^0$ be the solution of the weak Galerkin method \eqref{eqn_WG_scheme} with $k=0$. 
Assume each element $T\in\mathcal{T}_h$ is a simplex.
Set $u_h^{CR}=E_hu_h$, then $u_h^{CR}\in \mathring{U}_h^{\rm CR}$ is the solution of the nonconforming linear element method
\begin{equation}\label{eq:CRscheme}
(\nabla u_h^{\rm CR}, \nabla_h v)=(f, v_0), \quad\forall~v\in \mathring{U}_h^{\rm CR},
\end{equation}  
where $v_0$ is computed from \eqref{explicitcomputgradv0} up to a constant.
\end{lemma}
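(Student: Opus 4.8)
The plan is to recognize the weak Galerkin method \eqref{eqn_WG_scheme} as the Crouzeix--Raviart scheme \eqref{eq:CRscheme} after transporting it through the connection operator $E_h$. Eliminating the flux from the hybridized saddle point problem \eqref{equ_hy} gives $\sigma_h=\nabla_w u_h$ and reduces \eqref{eqn_WG_scheme} to the symmetric positive definite primal problem
$$
(\nabla_w u_h,\nabla_w v_h)=(f,v_0),\qquad \forall\,v_h=\{v_0,v_b\}\in M_h^0,
$$
so the first step is simply to record this reduced form and keep $u_h$ fixed as the datum of the argument.

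Then I would invoke \eqref{equivWeakgradCR}, which is available for $k=0$ and simplicial $\mathcal{T}_h$, to replace each weak gradient by an elementwise gradient: $\nabla_w u_h=\nabla_h(E_h u_h)=\nabla_h u_h^{CR}$ and $\nabla_w v_h=\nabla_h(E_h v_h)$. Substituting into the reduced problem gives
$$
(\nabla_h u_h^{CR},\nabla_h(E_h v_h))=(f,v_0),\qquad \forall\,v_h\in M_h^0.
$$
This is already the CR equation \eqref{eq:CRscheme} tested against $v=E_h v_h$, provided the load is correctly identified. For the load I would use \eqref{explicitcomputgradv0} to recover $\nabla v_0$ on each $K$ from $\nabla_h(E_h v_h)$ on the $d$ distinguished subsimplices $T_{\boldsymbol{t}_\ell}$; together with the defining face averages $\int_F E_h v_h=\int_F v_0$ this expresses $v_0$, up to the additive constant noted in the statement, through $v=E_h v_h$ alone, so that $(f,v_0)$ is exactly the functional in \eqref{eq:CRscheme}.

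The crux is the relation between the two test spaces, i.e. the range of $E_h\colon M_h^0\to \mathring{U}_h^{\rm CR}$. I would argue that $E_h$ is injective (this is already recorded after \eqref{equivWeakgradCR}) and that when each $K$ is a triangle in two dimensions a dimension count gives $\dim M_h^0=\dim \mathring{U}_h^{\rm CR}$, so $E_h$ is in fact a bijection; then $v=E_h v_h$ exhausts $\mathring{U}_h^{\rm CR}$, the displayed identity becomes \eqref{eq:CRscheme} for every $v\in \mathring{U}_h^{\rm CR}$, and since the CR problem is uniquely solvable (its bilinear form is coercive on $\mathring{U}_h^{\rm CR}$ by a discrete Poincar\'e inequality) $u_h^{CR}=E_h u_h$ must be its solution. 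This bijectivity is the step I expect to be delicate, and it is precisely where the advertised coincidence of the two stiffness matrices lives; for genuinely polytopal $K$, where $E_h$ is only injective, the same computation still shows that $u_h^{CR}$ satisfies the CR equations tested against the image $E_h(M_h^0)$, which is the sense in which the two methods share a stiffness matrix and differ only in the load vector.
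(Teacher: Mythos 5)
Your proposal is correct and follows the same route as the paper, whose entire proof reads ``It is an immediate result of \eqref{equivWeakgradCR}'': one substitutes $\nabla_w=\nabla_h\circ E_h$ into the reduced primal form \eqref{eqn_WG_scheme} and identifies the resulting test space and load. Your extra care about the surjectivity of $E_h$ is well placed rather than superfluous --- the quantifier ``for all $v\in\mathring{U}_h^{\rm CR}$'' in \eqref{eq:CRscheme} genuinely requires $E_h$ to be onto, which by the paper's own remarks holds only when each $K\in\mathcal{K}_h$ is a triangle in two dimensions, so your weaker conclusion for general polytopal $K$ (the CR equation holds when tested against $E_h(M_h^0)$ only) is the honest form of the statement that the paper leaves implicit.
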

\begin{proof}
It is an immediate result of \eqref{equivWeakgradCR}.
\end{proof}

\subsection{Stabilization free weak Galerkin methods}\label{sec:WG}
Recall that scheme \eqref{equ_hy} is given by
\begin{equation}
\begin{aligned}
(\sigma_h,\tau_h)-(\nabla_{w} u_h,\tau_h)&=0,&\;&\forall~\tau_h\in \Sigma^{-1}_h,
\\
-(\nabla_{w} v_h,\sigma_h)&=(-f,v_0), &\;&\forall~v_h\in M_h^0.
\end{aligned}
\end{equation}
Using the fact that $\nabla_{w} u_h\in \Sigma^{-1}_h$, the first equation gives that $\sigma_h=\nabla_{w} u_h$. Then scheme \eqref{equ_hy} becomes: Find $u_h\in M_h^0$, satisfying
\begin{equation}
(\nabla_w u_h,\nabla_w v)=(f,v_0), \quad\forall~v\in M_h^0.\label{eqn_WG_scheme}
\end{equation}

Note that scheme (\ref{eqn_WG_scheme}) can be treated as a weak Galerkin method without stabilizers. 
The inf-sup condition (\ref{eqn_inf_sup_WG}) will imply the coercivity of \eqref{eqn_WG_scheme}.
% that the weak gradient operator is an injection from $M_h^0$ to $\Sigma_h^{-1}$,
%leading to $u_h\equiv 0$, when $\nabla_w u_h=\boldsymbol{0}$. The coercivity of (\ref{eqn_WG_scheme}) is derived.

\begin{lemma}
We have the norm equivalence
\begin{equation}\label{eq:weakgradnormequiv}
\|\nabla_wu_h\|\eqsim \|u_h\|_{1,h}\quad\forall~u_h\in M_h^0.  
\end{equation}
Then the weak Galerkin method \eqref{eqn_WG_scheme} is well-posed.
\end{lemma}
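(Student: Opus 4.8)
The plan is to establish the two-sided bound separately: the upper bound $\|\nabla_w u_h\|\lesssim\|u_h\|_{1,h}$ follows directly from the defining relation \eqref{eq:wg} of the weak gradient, while the lower bound $\|u_h\|_{1,h}\lesssim\|\nabla_w u_h\|$ is essentially a corollary of the already-proven inf-sup condition \eqref{eqn_inf_sup_WG}. The only genuine technical ingredient is a combined trace–inverse inequality for the piecewise polynomials in $\Sigma_h^{-1}(K)$.

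For the upper bound I would test the local identity in \eqref{eq:wg} against $\tau=\nabla_{w,K}u_h\in\Sigma_h^{-1}(K)$ itself, obtaining
$$
\|\nabla_{w,K}u_h\|_K^2=(\nabla u_0,\nabla_{w,K}u_h)_K+\langle u_b-u_0,(\nabla_{w,K}u_h)\cdot\boldsymbol{n}\rangle_{\partial K}.
$$
Applying Cauchy–Schwarz to both terms and the inequality $\|\tau\cdot\boldsymbol{n}\|_{\partial K}\lesssim h_K^{-1/2}\|\tau\|_K$ for $\tau\in\Sigma_h^{-1}(K)$ (which follows from the trace inequality combined with the inverse inequality on each subsimplex, using $h_T\eqsim h_F\eqsim h_K$ guaranteed by (A1)--(A2)), I can divide by $\|\nabla_{w,K}u_h\|_K$ to reach $\|\nabla_{w,K}u_h\|_K\lesssim\|\nabla u_0\|_K+h_K^{-1/2}\|u_0-u_b\|_{\partial K}$. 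Squaring and summing over $K\in\mathcal{K}_h$ then yields $\|\nabla_w u_h\|\lesssim\|u_h\|_{1,h}$.

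For the lower bound I would exploit that $b_h(\sigma_h,u_h)=-(\nabla_w u_h,\sigma_h)$ together with the elementary bound $\|\sigma_h\|\le\|\sigma_h\|_{0,h}$. The inf-sup condition \eqref{eqn_inf_sup_WG} then gives
$$
\beta\|u_h\|_{1,h}\le\sup_{\sigma_h\in\Sigma_h^{-1}}\frac{-(\nabla_w u_h,\sigma_h)}{\|\sigma_h\|_{0,h}}\le\sup_{\sigma_h\in\Sigma_h^{-1}}\frac{\|\nabla_w u_h\|\,\|\sigma_h\|}{\|\sigma_h\|_{0,h}}\le\|\nabla_w u_h\|,
$$
which is the claimed bound. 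I expect this direction to be immediate once the inf-sup result is invoked; there is no real obstacle here, and the only item requiring care in the whole lemma is the uniformity of the trace–inverse estimate under the mesh hypotheses, which is standard for shape-regular $\mathcal{T}_h$.

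Finally, for well-posedness of \eqref{eqn_WG_scheme}, I would observe that the bilinear form $(\nabla_w\cdot,\nabla_w\cdot)$ is symmetric, and that the equivalence \eqref{eq:weakgradnormequiv} makes it both bounded and coercive with respect to $\|\cdot\|_{1,h}$, which is a genuine norm on $M_h^0$. Since the load functional $v\mapsto(f,v_0)$ is bounded on $M_h^0$, existence and uniqueness follow from the Lax–Milgram lemma (or simply from coercivity on the finite-dimensional space $M_h^0$).
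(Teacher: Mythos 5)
Your argument is correct and follows essentially the same route as the paper: the bound $\|u_h\|_{1,h}\lesssim\|\nabla_w u_h\|$ is read off from the inf-sup condition \eqref{eqn_inf_sup_WG} exactly as you do (the paper states this as an "equivalence" and you simply spell out the one-line derivation using $b_h(\sigma_h,u_h)=-(\nabla_w u_h,\sigma_h)$ and $\|\sigma_h\|\le\|\sigma_h\|_{0,h}$), while the reverse bound comes from testing the definition \eqref{eq:wg} with $\tau=\nabla_{w,K}u_h$ and applying Cauchy--Schwarz together with the standard trace--inverse estimate on the shape-regular subelements. No gaps; the well-posedness conclusion via coercivity on the finite-dimensional space is likewise the intended one.
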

\begin{proof}
The discrete inf-sup \eqref{eqn_inf_sup_WG} is equivalent to 
\begin{equation*}
\|u_h\|_{1,h}\lesssim \|\nabla_wu_h\| \quad\forall~u_h\in M_h^0.  
\end{equation*}
The other side of \eqref{eq:weakgradnormequiv} follows from the definition \eqref{eq:wg} of the weak gradient and the Cauchy-Schwarz inequality.
\end{proof}

%\LC{More literature review on stabilization free WG.}

Recently there is a trend to derive stabilization free polytopal element methods~\cite{FirstSFVEM,YeZhang2020,ChenHuangWei2024}. For WG, the common feature is to enrich the space of computing the weak gradient. In~\cite{YeZhang2020,AlTaweelWang2020,AlTaweelWang2020a,YeZhang2021,YeZhang2021a,YeZhang2021b,AlTaweelWangYeZhang2021}, the degree of polynomial for $\nabla_w u_h$ is not less than that for $u_0$. For our stabilization free scheme \eqref{eqn_WG_scheme}, the degree of polynomial for $\nabla_w u_h$ is one-order lower than that for $u_0$ but a piecewise polynomial space is used. Similar idea can be applied to VEM as discussed in the next subsection.

\subsection{Stabilization free non-conforming virtual element methods}
Introduce the $H^1$-nonconforming virtual element, for $k\geq 0$,
\[
W_{k+1}(K):=\left\{u\in H^1(K): \Delta u\in \mathbb P_{k+1}(K), \;\partial_n u|_F\in\mathbb P_{k}(F), \quad\forall~F\in\partial K\right\}.
\]
It is determined by the DoFs:
\begin{align}
\frac{1}{|F|}\int_F v\, q \dd S, & \quad q\in\mathbb P_{k}(F), F\in\partial K, \label{H1dof1}\\
\frac{1}{|K|}\int_K v\, q \dd x, & \quad q\in\mathbb P_{k+1}(K). \label{H1dof2}
\end{align}
Define the $H^1$-nonconforming virtual element space 
\begin{align*}
W_h:=\{v_h\in L^2(\Omega):&\, v_h|_K\in W_{k+1}(K) \textrm{ for } K\in  \mathcal K_h, \,Q_b([v_h]|_F)=0  \textrm{ on } F\in \mathcal{F}_h^K \}.
\end{align*}

Let $Q_M: W_h\to M_h$ be the $L^2$-projection operator defined as follows: for $v\in W_h$, $Q_Mv=\{Q_0v, Q_bv\}\in M_h$. Indeed, $Q_M$ is bijective: $v_0 = Q_0 v$ can be computed by DoF \eqref{H1dof2} and $v_b=Q_b v$ by DoF \eqref{H1dof1}. Then \eqref{eqn_WG_scheme} can be rewritten as the following virtual element method: Find $u_h\in W_h$ such that
\begin{equation}
(\nabla_w (Q_Mu_h),\nabla_w (Q_Mv))=(f,Q_0v), \quad\forall~v\in W_h.\label{eqn_VEM_scheme}
\end{equation}

\begin{lemma}
We have the norm equivalence
\begin{equation*}%\label{eq:vemnormequiv}
\|\nabla_w (Q_Mu_h)\|^2\eqsim \sum_{K\in\mathcal K_h}\|\nabla u_h\|_{K}^2\quad\forall~u_h\in W_h.  
\end{equation*}
Then the virtual element method \eqref{eqn_VEM_scheme} is well-posed.
\end{lemma}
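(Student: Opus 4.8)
The plan is to reduce the claim to a purely local statement on each polytope and then to isolate the one genuinely non-trivial estimate. First I observe that the constraint defining $W_h$ forces $Q_b(v_h|_F)=0$ on every boundary face $F\in\mathcal F_h^K\cap\partial\Omega$, so that $Q_M$ maps $W_h$ into $M_h^0$. Consequently the already established equivalence \eqref{eq:weakgradnormequiv} applies to $Q_Mu_h$ and gives
\begin{equation}\label{eq:vemreduce}
\|\nabla_w(Q_Mu_h)\|^2\eqsim\|Q_Mu_h\|_{1,h}^2=\sum_{K\in\mathcal K_h}\Big(\|\nabla Q_0u_h\|_K^2+h_K^{-1}\|Q_0u_h-Q_bu_h\|_{\partial K}^2\Big).
\end{equation}
Thus it suffices to prove, element by element, the local seminorm equivalence
\begin{equation}\label{eq:vemlocal}
\|\nabla Q_0u_h\|_K^2+h_K^{-1}\|Q_0u_h-Q_bu_h\|_{\partial K}^2\eqsim\|\nabla u_h\|_K^2,\qquad u_h\in W_{k+1}(K),
\end{equation}
and then to sum over $K\in\mathcal K_h$. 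Since $Q_0$ and $Q_b$ are $L^2$-projections, both sides of \eqref{eq:vemlocal} vanish precisely when $u_h$ is constant, so \eqref{eq:vemlocal} is a bona fide equivalence of seminorms on the finite-dimensional space $W_{k+1}(K)$.

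The upper bound in \eqref{eq:vemlocal} is the routine direction. I would bound $\|\nabla Q_0u_h\|_K\lesssim\|\nabla u_h\|_K$ by the $H^1$-seminorm stability of the $L^2$-projection on a star-shaped element, and control the boundary term by splitting $Q_0u_h-Q_bu_h$ on $\partial K$ into the projection errors $Q_0u_h-u_h$ and $u_h-Q_bu_h$ (inserting $Q_bQ_0u_h$ to stay within $\mathbb P_k(F)$), each of which is estimated by a trace inequality together with the $L^2$-approximation bounds for $Q_0$, $Q_b$ and a polynomial inverse inequality; all contributions carry the factor $h_K^{1/2}\|\nabla u_h\|_K$. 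The engine for the reverse inequality is the integration-by-parts identity, valid because $\Delta u_h\in\mathbb P_{k+1}(K)$ and $\partial_n u_h|_F\in\mathbb P_k(F)$ allow $u_h$ to be replaced by its degrees of freedom,
\begin{equation}\label{eq:vemIBP}
\|\nabla u_h\|_K^2=(\nabla u_h,\nabla Q_0u_h)_K+\langle\partial_n u_h,\,Q_bu_h-Q_0u_h\rangle_{\partial K}.
\end{equation}

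The lower bound in \eqref{eq:vemlocal} is the crux, and is exactly where the stabilization-free nature of the method enters. After Cauchy--Schwarz, \eqref{eq:vemIBP} delivers the desired estimate provided one controls the polynomial Neumann data by the energy, namely $\|\partial_n u_h\|_{\partial K}\lesssim h_K^{-1/2}\|\nabla u_h\|_K$. This is the hard point: for a general $H^1$ function such a bound is false, and a naive chaining of polynomial inverse inequalities for $\partial_n u_h$ and $\Delta u_h$ is circular, since the resulting constants do not close. The resolution rests on the fact that $u_h\in W_{k+1}(K)$ is uniquely determined by the degrees of freedom $Q_0u_h$ and $Q_bu_h$, so that on a single element \eqref{eq:vemlocal} holds automatically by finite-dimensionality; the real task is to make the equivalence constant uniform over the shape-regular family, which I would carry out by a scaling argument to unit-diameter elements under assumptions A1 and A2. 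What makes this possible without a stabilizer is that the reconstruction space $\Sigma_h^{-1}(K)=\prod_{T}\mathbb P_k(T;\mathbb R^d)$ lives on the sub-triangulation: testing \eqref{eq:vemIBP}, or equivalently the defining relation \eqref{eq:wg} of $\nabla_{w,K}(Q_Mu_h)$, against polynomials supported in the single subelement $T_F$ adjacent to each face $F$ resolves the normal trace $\partial_n u_h$ face by face, which a single-polynomial gradient on $K$ cannot do and which is precisely the origin of the stabilizer in classical virtual elements. Once \eqref{eq:vemlocal} is established, summation with \eqref{eq:vemreduce} proves the stated equivalence, and the coercivity it furnishes together with the continuity of the bilinear form yields the well-posedness of \eqref{eqn_VEM_scheme} by the Lax--Milgram lemma.
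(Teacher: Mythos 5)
Your reduction to the local statement via \eqref{eq:weakgradnormequiv}, your identification of $Q_MW_h\subset M_h^0$, and your integration-by-parts identity
$\|\nabla u_h\|_K^2=(\nabla u_h,\nabla Q_0u_h)_K+\langle\partial_n u_h,Q_bu_h-Q_0u_h\rangle_{\partial K}$
(valid because $\Delta u_h\in\mathbb P_{k+1}(K)$ kills the volume term against $u_h-Q_0u_h$ and $\partial_nu_h|_F\in\mathbb P_k(F)$ lets you insert $Q_b$) are all correct and match the paper's route, which runs the same computation with $v=u_h-Q_0u_h$ in place of $u_h$. You also correctly isolate the crux: the normal-trace bound $\|\partial_n u_h\|_{\partial K}\lesssim h_K^{-1/2}\|\nabla u_h\|_K$ for functions in the virtual space.

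The gap is that you do not actually prove this bound. Appealing to unisolvence of the degrees of freedom plus ``equivalence of norms on a finite-dimensional space, made uniform by scaling to unit diameter'' does not close the argument: after rescaling, a shape-regular family of polytopes is not a single reference configuration, the local space $W_{k+1}(\hat K)$ itself varies with the (non-compactly parametrized) geometry, and the uniformity of the equivalence constant over that family is precisely the content of the lemma --- so this step is circular rather than a proof. Your remark about testing against $\Sigma_h^{-1}(K)$ supported on single subelements $T_F$ also does not help here, because $\partial_nu_h$ is a trace of the \emph{virtual} function, not of the reconstructed gradient, and the weak-gradient DoFs do not control it. The paper closes exactly this point with the bubble-function technique of \cite{ChenHuang2020,Huang2020}: one tests the Neumann data (a polynomial on each face) against $\lambda_F\,\partial_n(u_h-Q_0u_h)$ extended into $K$, integrates by parts using $\Delta(u_h-Q_0u_h)\in\mathbb P_{k+1}(K)$, and obtains $\|\partial_n(u_h-Q_0u_h)\|_{\partial K}\lesssim h_K^{-1/2}\|\nabla(u_h-Q_0u_h)\|_K$ with constants depending only on A1--A2; combined with your identity and Cauchy--Schwarz this yields $\|\nabla(u_h-Q_0u_h)\|_K\lesssim h_K^{-1/2}\|Q_bu_h-Q_0u_h\|_{\partial K}$ and hence the lower bound. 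Without supplying that estimate (or an equivalent), your proof of the hard direction is incomplete.
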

\begin{proof}
Thanks to \eqref{eq:weakgradnormequiv}, it suffices to prove
\begin{equation}\label{eq:vemnormequiv0}
\|Q_Mu_h\|_{1,h}^2\eqsim \sum_{K\in\mathcal K_h}\|\nabla u_h\|_{K}^2\quad\forall~u_h\in W_h.  
\end{equation}
Applying the inverse inequality and the estimates of $Q_0$ and $Q_b$, we acquire
\begin{equation*}
\|Q_Mu_h\|_{1,h}^2\lesssim \sum_{K\in\mathcal K_h}\|\nabla u_h\|_{K}^2\quad\forall~u_h\in W_h.  
\end{equation*}

For the other side of \eqref{eq:vemnormequiv0},
\begin{align*}
\|\nabla_h(u_h-Q_0u_h)\|_K^2&=(\nabla_h(u_h-Q_0u_h),\nabla_h(u_h-Q_0u_h))_K\\
&= (\partial_n(u_h-Q_0u_h),Q_bu_h-Q_0u_h)_{\partial K} \\
&\leq \|\partial_n(u_h-Q_0u_h)\|_{\partial K}\|Q_bu_h-Q_0u_h\|_{\partial K}.
\end{align*}
By employing the bubble function technique as in~\cite{ChenHuang2020,Huang2020}, we have
\begin{equation*}
\|\partial_n(u_h-Q_0u_h)\|_{\partial K}\lesssim h_K^{-1/2}\|\nabla_h(u_h-Q_0u_h)\|_K.
\end{equation*}
The combination of the last two inequalities yields
\begin{equation*}
\|\nabla_h(u_h-Q_0u_h)\|_K\lesssim h_K^{-1/2}\|Q_bu_h-Q_0u_h\|_{\partial K}.
\end{equation*}
Therefore, \eqref{eq:vemnormequiv0} holds from the triangle inequality.
\end{proof}

That is, by enriching the space for computing the projection of the gradient to piecewise polynomial, we get a stabilization free non-conforming virtual element method. 
A different stabilization free non-conforming virtual element method is developed in~\cite{ChenHuangWei2024} recently.

%The original non-conforming VEM space is $\Delta u\in \mathbb P_{k-1}$.  \LC{ The higher moments can be eliminated locally by condensation.}

\section{Numerical Examples}\label{sec:numerexam}
In this section, we present three numerical experiments to describe our methods given in this paper with $k=0$ and $d=2$. 
All numerical experiments were developed based on the MATLAB package $i$FEM~\cite{Chen_iFem}.

\subsection{Implementation detail}
%The weak Galerkin scheme (\ref{eqn_WG_scheme}) is considered: Find $u_h\in M_h^0$, satisfying
%\begin{equation*}
%(\nabla_w u_h,\nabla_w v)=(f,v_0), \quad\forall~v\in M_h^0.
%\end{equation*}
Recall the weak gradient of $u_h = \{u_0, u_b\}$ is defined as
\begin{equation}\label{eq:wgc}
(\nabla_{w} u_h,\tau)_K =(\nabla u_0, \tau)_K+\langle u_b-u_0,\tau\cdot\boldsymbol{n}\rangle_{\partial K},\quad \forall~\tau\in\Sigma_h^{-1}(K).
\end{equation}
%We shall take $k=0$, $d=2$ as an example. 

The global stiffness matrix is assembled by the local stiffness $A_K$ on each polygon $K\in\mathcal{K}_h$. As a concrete example, take $K$ as a pentagon. The weak function $u_h$ is composed by a linear combination of bases $\{\phi_b^i, \phi_0^j, i=1,...,5, j=1,2,3, \}$:
$$
\phi_b^i=\chi_{F_i}, \;i=1,...,5,\quad \phi_0^1= 1,\quad \phi_0^2= (x-x_K)/h_K,\quad \phi_0^3= (y-y_K)/h_K,
$$
where $(x_K, y_K)$ is the average of vertices of $K$ and $h_K = |K|^{1/2}$.

%That is 
%$$
%A=\left[
%\begin{array}{ll}
%\left[( \nabla_w\phi_b^i, \nabla_w\phi_b^j)\right]_{5\times 5}, 
%\left[( \nabla_w\phi_b^i, \nabla_w\phi_0^j)\right]_{5\times 3}\\
%\left[( \nabla_w\phi_0^i, \nabla_w\phi_b^j)\right]_{3\times 5}, 
%\left[( \nabla_w\phi_0^i, \nabla_w\phi_0^j)\right]_{3\times 3}
%\end{array} 
% \right].
%$$
A weak gradient is expressed by a linear combination of bases $\{\boldsymbol{\zeta}_i,i=1,...,10\}=\{\boldsymbol{n}_i\chi_{T_i},  \boldsymbol{t}_i\chi_{T_i}, i=1,...,5\}$. The mass matrix of $\{\boldsymbol{\zeta}_i,i=1,...,10\}$ is $$M = {\rm diag}(|T_1|, \ldots, |T_5|, |T_1|, \ldots, |T_5|),$$
where $T_i$ is the triangle formed by $F_i$ and $x_K$. 
For general elliptic equation $\boldsymbol K^{-1}\sigma = \nabla u$, we only need to update $M$ by using $M_{\boldsymbol K^{-1}} = (\int_{K}\boldsymbol{\zeta}_i \boldsymbol K^{-1} \boldsymbol{\zeta}_j \dd x)$. 

Let $(x_{F_i}, y_{F_i})$, $i=1,...,5$ be the middle points of edges $F_i$. 
%$$
%M=\left[
%\begin{array}{cccccccccc}
%|T_1| &&&&&&&&&\\
%&|T_2|&&&&&&&&\\
%&&|T_3|&&&&&&&\\
%&&&|T_4|&&&&&&\\
%&&&&|T_5|&&&&&\\
%&&&&&|T_1|&&&&\\
%&&&&&&|T_2|&&&\\
%&&&&&&&|T_3|&&\\
%&&&&&&&&|T_4|&\\
%&&&&&&&&&|T_5|
%\end{array}
%\right].
%$$
Then the weak gradients  $\nabla_w\phi_b^j$, $\nabla_w\phi_0^i$ are given by 
$$
[\nabla_w\phi_b^1,...,\nabla_w\phi_b^5,\nabla_w\phi_0^1,...,\nabla_w\phi_0^3 ]=M^{-1}[D_b, D_0],
$$
where $D_b$ and $D_0$ are obtained by setting $u_0 =\phi_0^i$ and $u_b =\phi_b^j$ on the right hand side of \eqref{eq:wgc}:
\begin{align*}
D_b &=\left[
\begin{array}{cccc}
|F_1| & 0 & \ldots  &0\\
0 & |F_2| & \ldots  &0\\
\vdots& \vdots & \ddots&\vdots \\
%&|F_2|&&&\\
%&&|F_3|&&\\
%&&&|F_4|&\\
0 &0& \ldots&|F_5|\\
0&0&\ldots &0\\
\vdots & \vdots &  & \vdots \\
0&0&\ldots &0
\end{array}
\right],
\\
D_0&=\left[
\begin{array}{ccc}
-|F_1| &  \left [|T_1|\boldsymbol{n}_1^x+|F_1|(x_K-x_{F_1})\right ]/h_K& \left [ |T_1|\boldsymbol{n}_1^y+|F_1|(y_K-y_{F_1})\right ]/h_K\\[0.1in]
\vdots & \vdots & \vdots  \\
%-|F_2| & \frac{|T_2|\boldsymbol{n}_2^x+|F_2|(x_K-x_{F_2})}{h_K}& \frac{|T_2|\boldsymbol{n}_2^y+|F_2|(y_K-y_{F_2})}{h_K}\\[0.1in]
%-|F_3| & \frac{|T_3|\boldsymbol{n}_3^x+|F_3|(x_K-x_{F_3})}{h_K}& \frac{|T_3|\boldsymbol{n}_3^y+|F_3|(y_K-y_{F_3})}{h_K}\\[0.1in]
%-|F_4| & \frac{|T_4|\boldsymbol{n}_4^x+|F_4|(x_K-x_{F_4})}{h_K}& \frac{|T_4|\boldsymbol{n}_4^y+|F_4|(y_K-y_{F_4})}{h_K}\\[0.1in]
-|F_5| & \left [|T_5|\boldsymbol{n}_5^x+|F_5|(x_K-x_{F_5})\right ]/h_K& \left [|T_5|\boldsymbol{n}_5^y+|F_5|(y_K-y_{F_5})\right ]/h_K\\[0.1in]
0 & |T_1|\boldsymbol{t}_1^x/h_K& |T_1|\boldsymbol{t}_1^y/h_K\\[0.1in]
\vdots & \vdots & \vdots \\
%0 & \frac{|T_2|\boldsymbol{t}_2^x}{h_K}& \frac{|T_2|\boldsymbol{t}_2^y}{h_K}\\[0.1in]
%0 & \frac{|T_3|\boldsymbol{t}_3^x}{h_K}& \frac{|T_3|\boldsymbol{t}_3^y}{h_K}\\[0.1in]
%0 & \frac{|T_4|\boldsymbol{t}_4^x}{h_K}& \frac{|T_4|\boldsymbol{t}_4^y}{h_K}\\[0.1in]
0 & |T_5|\boldsymbol{t}_5^x/h_K& |T_5|\boldsymbol{t}_5^y/h_K
\end{array}
\right].
\end{align*}
The local stiffness matrix is computed by
$$
A_K = [D_b,D_0]^{\intercal}M^{-1}[D_b,D_0].
$$
For $\boldsymbol K^{-1}\sigma = \nabla u$, it becomes 
$$
A_K = [D_b,D_0]^{\intercal}M_{\boldsymbol K^{-1}}^{-1}[D_b,D_0].
$$
%where $$(\boldsymbol{K}^{-1})_h\sigma_h=-\nabla_wu_h,$$ and $(\boldsymbol{K}^{-1})_h$ means that $\boldsymbol K^{-1}$ is used in the computation of local mass matrix for $\Sigma_h^{-1}(K)$. For the lowest order $k = 0$ considered in this example, 
Notice that we are using the harmonic average of $\boldsymbol K$ in the primal formulation, which is more appropriate in heterogeneous media.

\subsection{Triangle mesh}
We consider the unit square domain $\Omega=(0,1)^2$ and apply a triangular mesh discretization $\mathcal{K}_h$, see Fig. \ref{fig:triangulation} (a). By taking the barycenter of each triangle, we obtain a refined triangular mesh $\mathcal{T}_h$, see Fig. \ref{fig:triangulation} (b) for mesh size $h=0.25$. In this example, we focus on the problem (\ref{eqn_WG_scheme}) for the lowest order case $k=0$.
We consider the exact solution as
$$
u=\cos(\pi x) \cos(\pi y),
$$
and the right-hand side is correspondingly given by
$f=2\pi^2 \cos(\pi x) \cos(\pi y).$

\begin{figure}[htbp]
\label{fig:triangulation}
\subfigure[A triangle partition $\mathcal{K}_h$.]{
\begin{minipage}[t]{0.45\linewidth}
\centering
\includegraphics*[width=4.8cm]{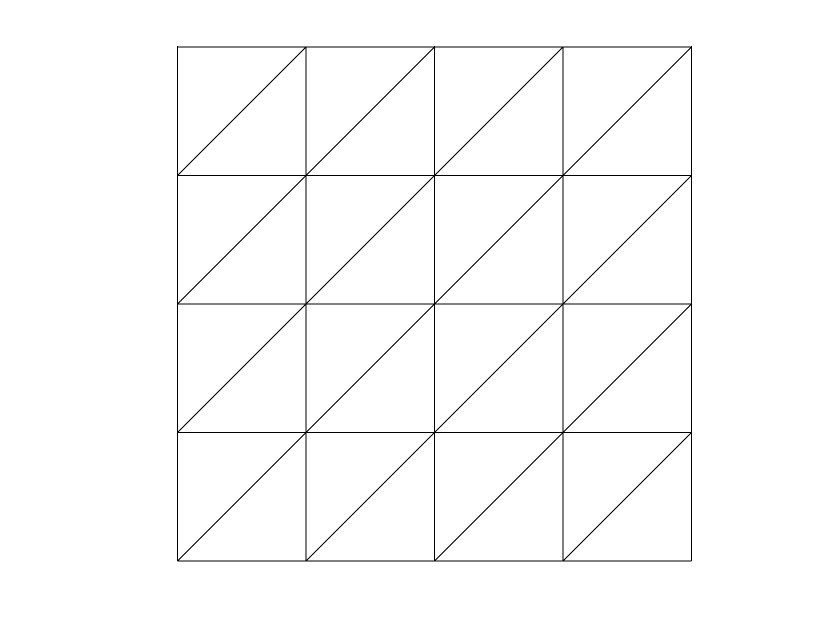}
\end{minipage}}%%
\subfigure[Triangulation $\mathcal{T}_h$ for $\mathcal{K}_h$.]
{\begin{minipage}[t]{0.45\linewidth}
\centering
\includegraphics*[width=4.8cm]{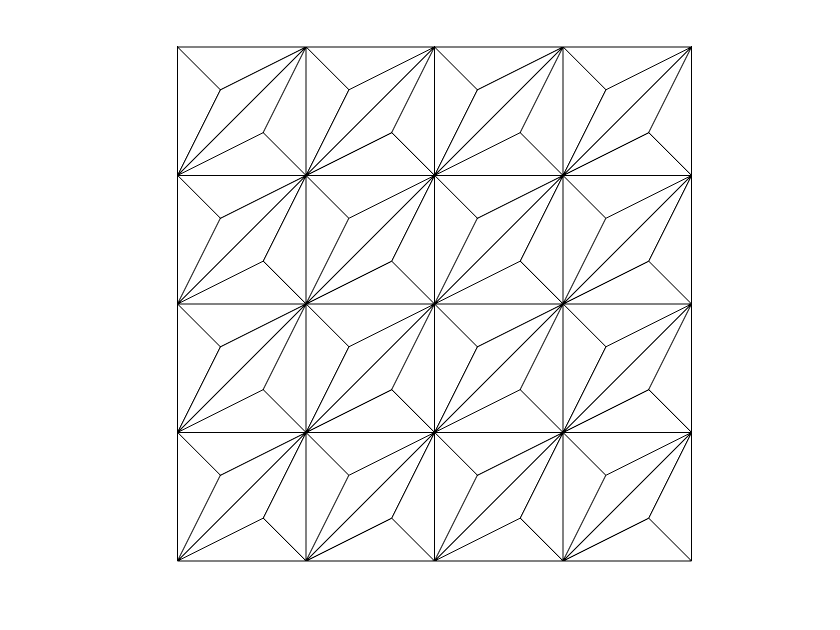}
\end{minipage}}
\caption{Partitions of $\Omega$ for $h=0.25$.}
\end{figure}

As described in the subsection \ref{subsec:conn2CR}, the numerical solution 
$u_h$ of (\ref{eqn_WG_scheme}) is equivalent to the Crouzeix-Raviart (CR) element solution of (\ref{eq:CRscheme}).
Then mesh sizes $h$ are taken as $h=\frac{1}{32}, \frac{1}{128}, \frac{1}{512}, \frac{1}{2048}, \frac{1}{8192}$.  The number of polygons in the mesh is denoted by $N_K$.
Table \ref{Ex1_P0P1CR} shows the error between the numerical and exact solutions in the $L^2$-norm $\|u-u_h\|$ and $H^1$-seminorm $\|\nabla u - \nabla_w u_h\| = \| \sigma - \sigma_h \|$, along with the corresponding convergence rates, which exhibit second-order and first-order convergence, respectively.
%Here $\nabla_h$ means the elementwise gradient with respect to $\mathcal K_h$.

\begin{table}[!ht]
  \centering
  \caption{Example 1 (Triangle mesh):
    Error and the convergence rate. }
    \renewcommand{\arraystretch}{1.125}
    \resizebox{7.5cm}{!}{
  \begin{tabular}{@{}c c c c c c @{}}
    \toprule
  $ h $  &    $N_K$   &  $ \| \sigma - \sigma_h \| $  
  &  Rate   &   $ \|u-u_h\| $  &  Rate
\\
    \hline
2.500e-01 &    32  &  6.03095e-01 &  -- &  2.54911e-02 &  --\\
1.250e-01 &   128  &  3.02359e-01 &  1.00 &  6.33303e-03 &  2.01\\
6.250e-02 &   512  &  1.51292e-01 &  1.00 &  1.58159e-03 &  2.00\\
3.125e-02 &  2048  &  7.56601e-02 &  1.00 &  3.95307e-04 &  2.00\\
1.562e-02 &  8192  &  3.78319e-02 &  1.00 &  9.88212e-05 &  2.00\\
    \bottomrule
  \end{tabular}}
\label{Ex1_P0P1CR}
\end{table}

%\begin{figure}[htbp]
%  \centering
%  \begin{tabular}{c}
%    \resizebox{2.0in}{1.80in}{\includegraphics{P0P1rate.png}} 
%  \end{tabular}
%  \caption{Error and convergence rate. }
%  \label{fig_P0P1rate}
%\end{figure}

\subsection{Polygon mesh}
We examine the unit square domain $\Omega=(0,1)^2$ and use polygon meshes, see Fig. \ref{fig_polygonmesh} with 16 polygons. This example is centered on the lowest-order case $k=0$.
We consider the exact solution as
$$
u=\cos(\pi x) \cos(\pi y)-1,
$$
and the right-hand side is given accordingly.
The errors between the numerical solution $u_h$ of (\ref{eqn_WG_scheme}) and the exact solution $u$ are measured in the following norms:
\begin{align*}
\|u-u_h\|_{1,h}&=\left(\sum_{K\in\mathcal{K}_h}(\|\nabla u-\nabla u_0\|_K^2+h^{-1}_K\|u_0-u_b\|_{\partial K}^2)\right)^{\frac12},\\
\|u-u_0\| &=\left(\sum_{K\in\mathcal{K}_h}\|u-u_0\|_K^2\right)^{\frac12}, \\
\|\sigma-\sigma_h\|_{0,h}&=\|\nabla u-\nabla_w u_h\|_{0,h}\\
&=\left( \sum_{K\in\mathcal{K}_h}\|\nabla u-\nabla_w u_h\|_K^2
+\sum_{K\in \mathcal{K}_h}h_K\|(\nabla u-\nabla_w u_h) \cdot \boldsymbol{n}\|^2_{\partial K}  \right)^{\frac12}.
\end{align*}
The middle point of edges in $\mathcal T_h$ are used as the quadrature points which is exact for the quadratic integrand. The error and convergence order are given in Table \ref{Ex2_P0P1poly}.

\begin{figure}[htbp]
\label{fig_polygonmesh}
\subfigure[A polygon partition $\mathcal{K}_h$.]{
\begin{minipage}[t]{0.45\linewidth}
\centering
\includegraphics*[width=4.8cm]{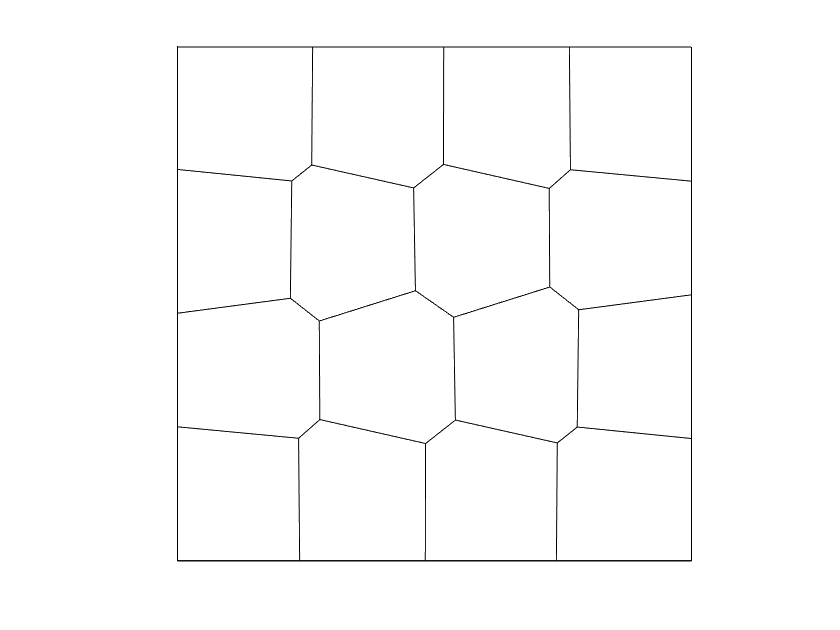}
\end{minipage}}%%
\subfigure[Triangulation $\mathcal{T}_h$ for $\mathcal{K}_h$.]
{\begin{minipage}[t]{0.45\linewidth}
\centering
\includegraphics*[width=4.8cm]{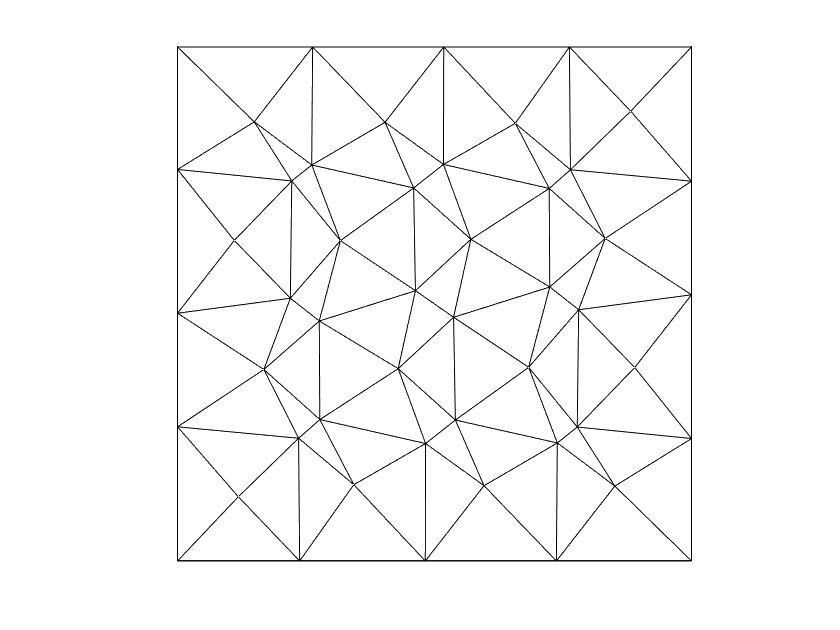}
\end{minipage}}
\caption{A polygonal mesh of $\Omega$ with $N_K=16$.}
\end{figure}

%\begin{figure}[htbp]
%  \centering
%  \begin{tabular}{c}
%    \resizebox{2.0in}{1.50in}{\includegraphics{P1P0H1L2rate.png}} 
%  \end{tabular}
%  \caption{ Error and convergence rate. }
%  \label{fig_polyrate}
%\end{figure}

\begin{table}[!ht]
  \centering
  \caption{Example 2 (Polygon mesh):
    Error and the convergence rate.}
    \renewcommand{\arraystretch}{1.125}
    \resizebox{11cm}{!}{
  \begin{tabular}{@{}c c c c c c c c @{}}
    \toprule
  $ h $   &    $ N_K$ & $\|u-u_h\|_{1,h} $ & Rate & $ \|u-u_0\| $ & Rate & $ \|\sigma-\sigma_h\|_{0,h} $ & Rate 
    \\
    \hline
1.250e-01 &     64 &  4.18367e-01 &  -- &  9.86917e-03 &  -- &  8.15496e-01 &  --\\
6.250e-02 &    256 &  2.05838e-01 &  1.02 &  2.50172e-03 &  1.98 &  4.11331e-01 &  0.99\\
3.125e-02 &   1024 &  1.03069e-01 &  1.00 &  6.24662e-04 &  2.00 &  2.05950e-01 &  1.00\\
1.562e-02 &   4096 &  5.15408e-02 &  1.00 &  1.58685e-04 &  1.98 &  1.03652e-01 &  0.99\\
7.069e-03 &  20014 &  2.56836e-02 &  1.00 &  3.98802e-05 &  1.99 &  5.15632e-02 &  1.01\\
    \bottomrule
  \end{tabular}}
\label{Ex2_P0P1poly}
\end{table}

\subsection{Local conservation}   
We take an example from~\cite{sun2009locally} to illustrate the local conservation property. Consider Darcy's problem of unit flow in horizontal direction with hydraulic conductivity coefficient $\boldsymbol{K}$: Find $u$ satisfying
\begin{equation*}
  -\nabla\cdot(\boldsymbol{K}\nabla u)=f, \quad\text{in}\;\Omega,
\end{equation*}
with $\Omega=(0,1)^2$, and the boundary conditions
$$
u=1,\quad\text{left},\quad u=0,\quad\text{right},\quad -\boldsymbol{K}\nabla u\cdot\boldsymbol{n}=0,\quad\text{elsewhere}.
$$
The conductivity is a diagonal tensor $\boldsymbol{K}=\kappa\boldsymbol{I}$ with $\kappa$ being $10^{-3}$ in $(\frac38,\frac58)\times(\frac14,\frac34)$ and $1$ elsewhere, see Fig. \ref{fig:poorlypermreg} (a).  
The right-hand side function is given by $f=0$. A uniform $32\times 32$ square mesh is used. In observation of Fig. \ref{fig:poorlypermreg} (b), the fluid flows from left to right, and the direction of the arrows indicates that the fluid will surround the low-permeability region.

\begin{figure}[htbp]
\label{fig:poorlypermreg}
\subfigure[Value $\kappa$ of conductivity $\boldsymbol{K}$.]{
\begin{minipage}[t]{0.315\linewidth}
\centering
\includegraphics*[width=4.0cm,height=3.2cm]{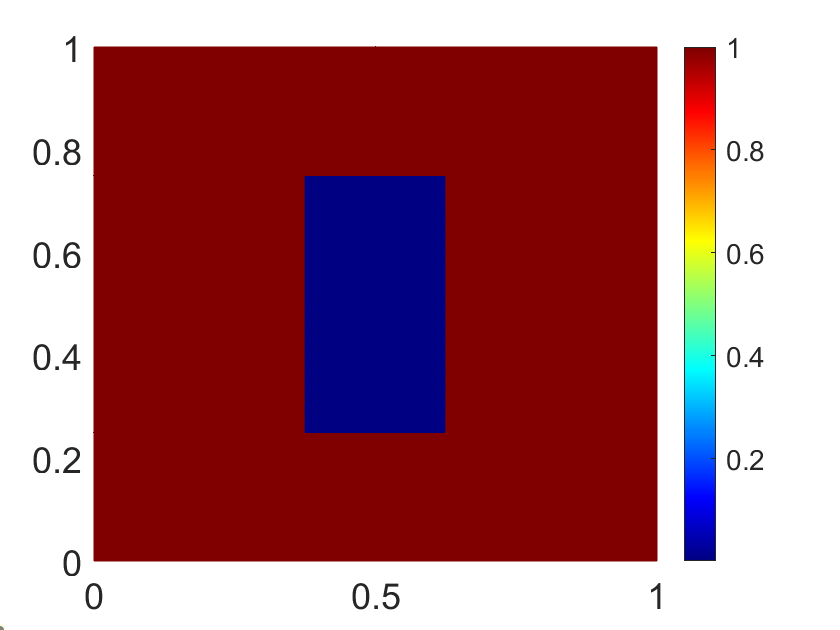}
\end{minipage}}%%
\subfigure[Computed $u_h$ and $\sigma_h$/$|\sigma_h|$.]
{\begin{minipage}[t]{0.325\linewidth}
\centering
\includegraphics*[width=3.5cm,height=3.0cm]{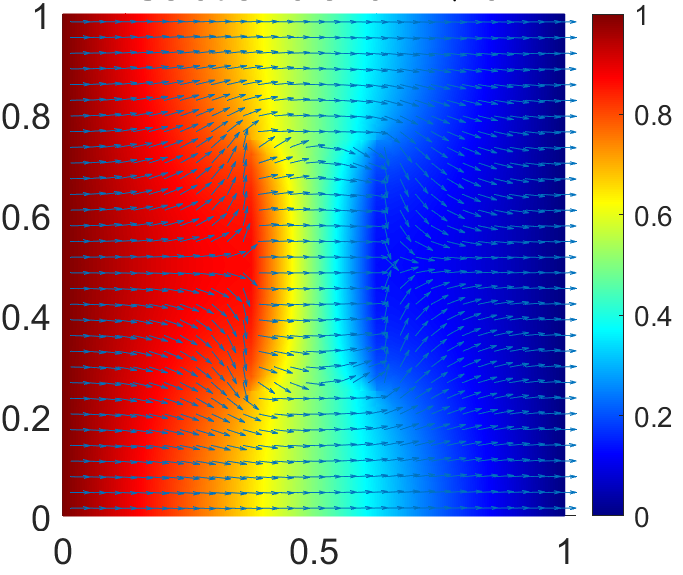}
\end{minipage}}
\subfigure[The local conservation residual.]
{\begin{minipage}[t]{0.315\linewidth}
\centering
\includegraphics*[width=3.6cm,height=3.125cm]{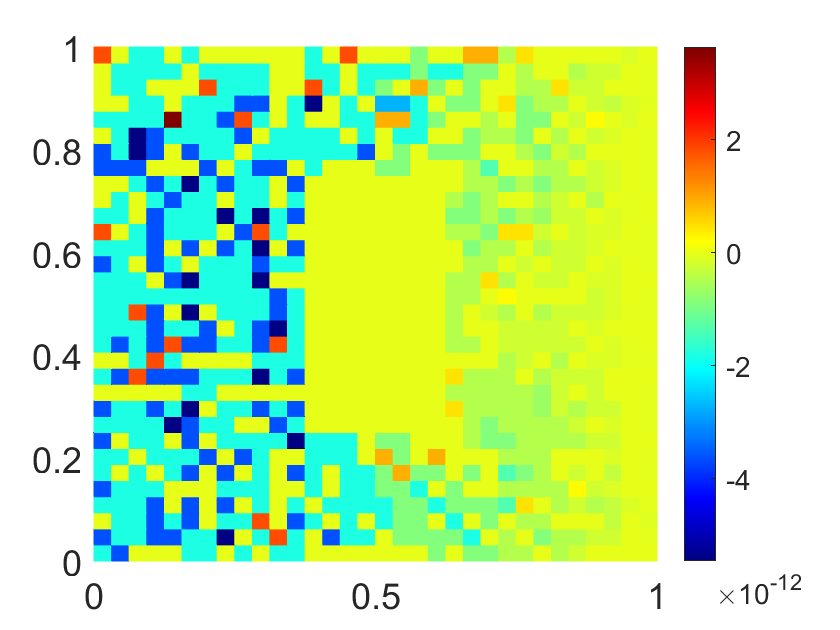}
\end{minipage}}
\caption{Example 3: The pressure $u$ and velocity $-\boldsymbol{K}\nabla u$.}
\end{figure}

With the Darcy velocity $\sigma:=-\boldsymbol{K}\nabla u$, the numerical scheme is given by
\begin{equation}
\begin{aligned}\label{eq:Ex3}
(\boldsymbol{K}^{-1}\sigma_h,\tau_h)+(\nabla_{w} u_h,\tau_h)&=0,&\;&\forall~\tau_h\in \Sigma^{-1}_h,
\\
(\sigma_h, \nabla_{w} v_h)&=(-f,v_0), &\;&\forall~v_h\in M_h^0.
\end{aligned}
\end{equation}
Taking $v_h=\{\chi_K, 0\}$ in the second equation of (\ref{eq:Ex3}), we have on each element $K\in\mathcal{K}_h$ the local conservation
$$
\int_{K}f \dx-\int_{\partial K}\sigma_h\cdot\boldsymbol{n} \dd S=0.
$$
%Note that the conductivity is a constant on element $K\in\mathcal{K}_h$.
%That means the scheme (\ref{eq:Ex3}) is locally conservative. 
In Fig.~\ref{fig:poorlypermreg}~(c), we display the residual $(\int_{K}f \dd x-\int_{\partial K}\sigma_h\cdot\boldsymbol{n} \dd S)/|K|$ which is in the order of $10^{-12}$. The importance of local conservation is further illustrated by examples in~\cite{sun2009locally}.

%\begin{figure}[htbp]
%\label{fig:localconserresid}
%\subfigure[The mesh for $\Omega$ with $h=\frac{1}{32}$]{
%\begin{minipage}[t]{0.45\linewidth}
%\centering
%\includegraphics*[width=4.8cm]{localconservationresi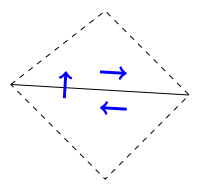}
%\end{minipage}}%%
%\subfigure[The local conservation residual]
%{\begin{minipage}[t]{0.45\linewidth}
%\centering
%\includegraphics*[width=4.8cm]{localconservationresidual.png}
%\end{minipage}}
%\caption{Example 3: The mesh and the local conservation residual.}
%\end{figure}

%\section*{Acknowledgments}
%The first author was supported by DMS-2012465 and DMS-2309785. The second author was supported by the National Natural Science Foundation of China Project 
%12171300, and the Natural Science Foundation of Shanghai, China 21ZR1480500. The third author was supported by the National Research Foundation of Korea (NRF) grant funded by the Ministry of Science and ICT (NRF-2022R1A2B5B02002481). The fourth author was supported by the National Key Research and Development Program of China (grant No. 2020YFA0713602, 2023YFA1008803), and the Key Laboratory of Symbolic Computation and Knowledge Engineering of Ministry of Education of China housed at Jilin University.
%
%\bibliographystyle{siamplain}
%\bibliography{SDG_elliptic_SIAM}

\begin{thebibliography}{10}

\bibitem{AlTaweelWang2020a}
{\sc A.~Al-Taweel and X.~Wang}, {\em The lowest-order stabilizer free weak
  {G}alerkin finite element method}, Appl. Numer. Math., 157 (2020),
  pp.~434--445.

\bibitem{AlTaweelWang2020}
{\sc A.~Al-Taweel and X.~Wang}, {\em A note on the optimal degree of the weak
  gradient of the stabilizer free weak {G}alerkin finite element method}, Appl.
  Numer. Math., 150 (2020), pp.~444--451.

\bibitem{AlTaweelWangYeZhang2021}
{\sc A.~Al-Taweel, X.~Wang, X.~Ye, and S.~Zhang}, {\em A stabilizer free weak
  {G}alerkin finite element method with supercloseness of order two}, Numer.
  Methods Partial Differential Equations, 37 (2021), pp.~1012--1029.

\bibitem{MFMFYotov2019}
{\sc I.~Ambartsumyan, E.~Khattatov, J.~J. Lee, and I.~Yotov}, {\em Higher order
  multipoint flux mixed finite element methods on quadrilaterals and
  hexahedra}, Mathematical Models and Methods in Applied Sciences, 29 (2019),
  pp.~1037--1077.

\bibitem{AntoniettiSISC2014}
{\sc P.~F. Antonietti, S.~Giani, and P.~Houston}, {\em {$hp$}-version composite
  discontinuous galerkin methods for elliptic problems on complicated domains},
  SIAM Journal on Scientific Computing, 35 (2013), pp.~A1417--A1439.

\bibitem{ArbogastWang2023}
{\sc T.~Arbogast and C.~Wang}, {\em Direct serendipity and mixed finite
  elements on convex polygons}, Numerical Algorithms, 92 (2023),
  pp.~1451--1483.

\bibitem{ArbogastWheelerYotov1997}
{\sc T.~Arbogast, M.~F. Wheeler, and I.~Yotov}, {\em Mixed finite elements for
  elliptic problems with tensor coefficients as cell-centered finite
  differences}, SIAM Journal on Numerical Analysis, 34 (1997), pp.~828--852.

\bibitem{ArnoldBrezzi1985nonconforming}
{\sc D.~N. Arnold and F.~Brezzi}, {\em Mixed and nonconforming finite element
  methods: implementation, postprocessing and error estimates}, ESAIM:
  Mathematical Modelling and Numerical Analysis, 19 (1985), pp.~7--32.

\bibitem{Arnold2002}
{\sc D.~N. Arnold, F.~Brezzi, B.~Cockburn, and L.~D. Marini}, {\em Unified
  analysis of discontinuous galerkin methods for elliptic problems}, SIAM J.
  Numer. Anal., 39 (2002), pp.~1749--1779.

\bibitem{AyusodeDiosLipnikovManzini2016}
{\sc B.~Ayuso~de Dios, K.~Lipnikov, and G.~Manzini}, {\em The nonconforming
  virtual element method}, ESAIM Math. Model. Numer. Anal., 50 (2016),
  pp.~879--904.

\bibitem{BeiraodaVeigaBrezziCangianiManziniEtAl2013}
{\sc L.~Beir\~ao~da Veiga, F.~Brezzi, A.~Cangiani, G.~Manzini, L.~D. Marini,
  and A.~Russo}, {\em Basic principles of virtual element methods}, Math.
  Models Methods Appl. Sci., 23 (2013), pp.~199--214.

\bibitem{BeiraodaVeigaBrezziMariniRusso2014}
{\sc L.~Beir\~ao~da Veiga, F.~Brezzi, L.~D. Marini, and A.~Russo}, {\em The
  hitchhiker's guide to the virtual element method}, Math. Models Methods Appl.
  Sci., 24 (2014), pp.~1541--1573.

\bibitem{BoffiBrezziFortin2013}
{\sc D.~Boffi, F.~Brezzi, and M.~Fortin}, {\em Mixed finite element methods and
  applications}, vol.~44 of Springer Series in Computational Mathematics,
  Springer, Heidelberg, 2013.

\bibitem{BrezziManziniMariniPietraEtAl2000}
{\sc F.~Brezzi, G.~Manzini, D.~Marini, P.~Pietra, and A.~Russo}, {\em
  Discontinuous {G}alerkin approximations for elliptic problems}, Numer.
  Methods Partial Differential Equations, 16 (2000), pp.~365--378.

\bibitem{CaiDouglasSantosSheenEtAl2000}
{\sc Z.~Cai, J.~Douglas, Jr., J.~E. Santos, D.~Sheen, and X.~Ye}, {\em
  Nonconforming quadrilateral finite elements: a correction}, Calcolo, 37
  (2000), pp.~253--254.

\bibitem{ChenHuangWei2024}
{\sc C.~Chen, X.~Huang, and H.~Wei}, {\em Virtual element methods without
  extrinsic stabilization}, SIAM J. Numer. Anal., 62 (2024), pp.~567--591.

\bibitem{Chen_iFem}
{\sc L.~Chen}, {\em ifem: an integrated finite element methods package in
  matlab}, Technical Report, University of California at Irvine, 16 (2009),
  \url{https://github.com/lyc102/ifem}.

\bibitem{ChenHuang2020}
{\sc L.~Chen and X.~Huang}, {\em Nonconforming virtual element method for
  {$2m$}th order partial differential equations in {$\mathbb{R}^n$}}, Math.
  Comp., 89 (2020), pp.~1711--1744.

\bibitem{Cheung15}
{\sc S.~Cheung, E.~T. Chung, H.~H. Kim, and Y.~Qian}, {\em Staggered
  discontinuous {Galerkin} methods for the incompressible {Navier--Stokes}
  equations}, J. Comput. Phys., 302 (2015), pp.~251--266.

\bibitem{ChungCockbuFu2014staggered}
{\sc E.~Chung, B.~Cockburn, and G.~Fu}, {\em The staggered {DG} method is the
  limit of a hybridizable {DG} method}, SIAM J. Numer. Anal., 52 (2014),
  pp.~915--932.

\bibitem{chung2024stabilization}
{\sc E.~Chung and L.~Zhao}, {\em The stabilization-free {HDG} method for
  fluid-structure interaction in a unified mixed formulation on alfeld splits},
  Math. Comp.,  (2024).

\bibitem{ChungDu17}
{\sc E.~T. Chung, J.~Du, and C.~Lam}, {\em Discontinuous {Galerkin} methods
  with staggered hybridization for linear elastodynamics}, Comput. Math. Appl.,
  74 (2017), pp.~1198--1214.

\bibitem{ChungEngquist06}
{\sc E.~T. Chung and B.~Engquist}, {\em Optimal discontinuous {Galerkin}
  methods for wave propagation}, SIAM J. Numer. Anal., 44 (2006),
  pp.~2131--2158.

\bibitem{ChungWave09}
{\sc E.~T. Chung and B.~Engquist}, {\em Optimal discontinuous {Galerkin}
  methods for the acoustic wave equation in higher dimensions}, SIAM J. Numer.
  Anal., 47 (2009), pp.~3820--3848.

\bibitem{ChungKim13}
{\sc E.~T. Chung, H.~H. Kim, and O.~B. Widlund}, {\em Two-level overlapping
  {Schwarz} algorithms for a staggered discontinuous {Galerkin} method}, SIAM
  J. Numer. Anal., 51 (2013), pp.~47--67.

\bibitem{ChungLee11}
{\sc E.~T. Chung and C.~Lee}, {\em A staggered discontinuous {Galerkin} method
  for the curl--curl operator}, IMA J. Numer. Anal., 32 (2012), pp.~1241--1265.

\bibitem{ChungQiu17}
{\sc E.~T. Chung and W.~Qiu}, {\em Analysis of an {SDG} method for the
  incompressible {Navier--Stokes} equations}, SIAM J. Numer. Anal., 55 (2017),
  pp.~543--569.

\bibitem{Cockburn2023}
{\sc B.~Cockburn}, {\em Hybridizable discontinuous {G}alerkin methods for
  second-order elliptic problems: overview, a new result and open problems},
  Jpn. J. Ind. Appl. Math., 40 (2023), pp.~1637--1676.

\bibitem{CockburnDongGuzman2008}
{\sc B.~Cockburn, B.~Dong, and J.~Guzm\'an}, {\em A superconvergent
  {LDG}-hybridizable {G}alerkin method for second-order elliptic problems},
  Math. Comp., 77 (2008), pp.~1887--1916.

\bibitem{UnifHdgEllip2009}
{\sc B.~Cockburn, J.~Gopalakrishnan, and R.~Lazarov}, {\em Unified
  hybridization of discontinuous {G}alerkin, mixed, and continuous {G}alerkin
  methods for second order elliptic problems}, SIAM J. Numer. Anal., 47 (2009),
  pp.~1319--1365.

\bibitem{Cockburn2000}
{\sc B.~Cockburn and C.~Shu}, {\em The {Runge}--{Kutta} discontinuous galerkin
  method for conservation laws v: Multidimensional systems}, Journal of
  Computational Physics, 141 (2000), pp.~199--224.

\bibitem{FirstSFVEM}
{\sc A.~D'Altri, S.~{de Miranda}, L.~Patruno, and E.~Sacco}, {\em An enhanced
  {VEM} formulation for plane elasticity}, Computer Methods in Applied
  Mechanics and Engineering, 376 (2021), p.~113663.

\bibitem{Dauge1988}
{\sc M.~Dauge}, {\em Elliptic boundary value problems on corner domains},
  vol.~1341 of Lecture Notes in Mathematics, Springer-Verlag, Berlin, 1988.

\bibitem{DiPietroErnLemaire2014}
{\sc D.~A. Di~Pietro, A.~Ern, and S.~Lemaire}, {\em An arbitrary-order and
  compact-stencil discretization of diffusion on general meshes based on local
  reconstruction operators}, Comput. Methods Appl. Math., 14 (2014),
  pp.~461--472.

\bibitem{Du18}
{\sc J.~Du and E.~T. Chung}, {\em An adaptive staggered discontinuous
  {Galerkin} method for the steady state convection--diffusion equation}, J.
  Sci. Comput., 77 (2018), pp.~1490--1518.

\bibitem{Han1984}
{\sc H.~D. Han}, {\em Nonconforming elements in the mixed finite element
  method}, J. Comput. Math., 2 (1984), pp.~223--233.

\bibitem{HuShi2005}
{\sc J.~Hu and Z.-c. Shi}, {\em Constrained quadrilateral nonconforming rotated
  {$Q_1$} element}, J. Comput. Math., 23 (2005), pp.~561--586.

\bibitem{Huang2020}
{\sc X.~Huang}, {\em Nonconforming virtual element method for 2{$m$}th order
  partial differential equations in {$\mathbb R^n$} with {$m>n$}}, Calcolo, 57
  (2020), pp.~Paper No. 42, 38.

\bibitem{KimZhaoPark2020}
{\sc D.~Kim, L.~Zhao, and E.-J. Park}, {\em Staggered {DG} methods for the
  pseudostress-velocity formulation of the stokes equations on general meshes},
  SIAM Journal on Scientific Computing, 42 (2020), pp.~A2537--A2560.

\bibitem{KimChungStokes13}
{\sc H.~H. Kim, E.~T. Chung, and C.~Lee}, {\em A staggered discontinuous
  {Galerkin} method for the {Stokes} system}, SIAM J. Numer. Anal., 51 (2013),
  pp.~3327--3350.

\bibitem{LinTobiskaZhou2005}
{\sc Q.~Lin, L.~Tobiska, and A.~Zhou}, {\em Superconvergence and extrapolation
  of non-conforming low order finite elements applied to the {P}oisson
  equation}, IMA J. Numer. Anal., 25 (2005), pp.~160--181.

\bibitem{ParkSheen2003}
{\sc C.~Park and D.~Sheen}, {\em {$P_1$}-nonconforming quadrilateral finite
  element methods for second-order elliptic problems}, SIAM J. Numer. Anal., 41
  (2003), pp.~624--640.

\bibitem{RannacherTurek1992}
{\sc R.~Rannacher and S.~Turek}, {\em Simple nonconforming quadrilateral
  {S}tokes element}, Numer. Methods Partial Differential Equations, 8 (1992),
  pp.~97--111.

\bibitem{raviart2006mixed}
{\sc P.-A. Raviart and J.-M. Thomas}, {\em A mixed finite element method for
  2-nd order elliptic problems}, in Mathematical Aspects of Finite Element
  Methods: Proceedings of the Conference Held in Rome, December 10--12, 1975,
  Springer, 2006, pp.~292--315.

\bibitem{sun2009locally}
{\sc S.~Sun and J.~Liu}, {\em A locally conservative finite element method
  based on piecewise constant enrichment of the continuous galerkin method},
  SIAM Journal on Scientific Computing, 31 (2009), pp.~2528--2548.

\bibitem{WangYe2013}
{\sc J.~Wang and X.~Ye}, {\em A weak {G}alerkin finite element method for
  second-order elliptic problems}, J. Comput. Appl. Math., 241 (2013),
  pp.~103--115.

\bibitem{WangYe2014}
{\sc J.~Wang and X.~Ye}, {\em A weak {G}alerkin mixed finite element method for
  second order elliptic problems}, Math. Comp., 83 (2014), pp.~2101--2126.

\bibitem{Wang2019}
{\sc Y.~Wang}, {\em A nonconforming {C}rouzeix-{R}aviart type finite element on
  polygonal meshes}, Math. Comp., 88 (2019), pp.~237--271.

\bibitem{WeiHuangLi2021SIAM}
{\sc H.~Wei, X.~Huang, and A.~Li}, {\em Piecewise divergence-free nonconforming
  virtual elements for {S}tokes problem in any dimensions}, SIAM J. Numer.
  Anal., 59 (2021), pp.~1835--1856.

\bibitem{MFMFYotov2006}
{\sc M.~F. Wheeler and I.~Yotov}, {\em A multipoint flux mixed finite element
  method}, SIAM Journal on Numerical Analysis, 44 (2006), pp.~2082--2106.

\bibitem{YeZhang2020}
{\sc X.~Ye and S.~Zhang}, {\em A stabilizer-free weak {G}alerkin finite element
  method on polytopal meshes}, J. Comput. Appl. Math., 371 (2020), pp.~112699,
  9.

\bibitem{YeZhang2021b}
{\sc X.~Ye and S.~Zhang}, {\em A new weak gradient for the stabilizer free weak
  {G}alerkin method with polynomial reduction}, Discrete Contin. Dyn. Syst.
  Ser. B, 26 (2021), pp.~4131--4145.

\bibitem{YeZhang2021}
{\sc X.~Ye and S.~Zhang}, {\em A stabilizer free weak {G}alerkin finite element
  method on polytopal mesh: {P}art {II}}, J. Comput. Appl. Math., 394 (2021),
  pp.~Paper No. 113525, 11.

\bibitem{YeZhang2021a}
{\sc X.~Ye and S.~Zhang}, {\em A stabilizer free weak {G}alerkin finite element
  method on polytopal mesh: {P}art {III}}, J. Comput. Appl. Math., 394 (2021),
  pp.~Paper No. 113538, 9.

\bibitem{LinaChungParkIMA2023}
{\sc L.~Zhao, E.~Chung, and E.-J. Park}, {\em {A locking-free staggered DG
  method for the Biot system of poroelasticity on general polygonal meshes}},
  IMA Journal of Numerical Analysis, 43 (2023), pp.~2777--2816.

\bibitem{ZhaoChungLam20}
{\sc L.~Zhao, E.~T. Chung, and M.~Lam}, {\em A new staggered {DG} method for
  the {Brinkman} problem robust in the {Darcy} and {Stokes} limits}, Comput.
  Methods Appl. Mech. Engrg., 364 (2020).

\bibitem{LinaEricPark20}
{\sc L.~Zhao, E.~T. Chung, E.-J. Park, and G.~Zhou}, {\em Staggered {DG} method
  for coupling of the {Stokes} and {Darcy-Forchheimer} problems}, SIAM J.
  Numer. Anal., 59 (2021), pp.~1--31.

\bibitem{LinaKim20}
{\sc L.~Zhao, D.~Kim, E.-J. Park, and E.~Chung}, {\em Staggered {DG} method
  with small edges for {Darcy} flows in fractured porous media}, J. Sci.
  Comput., 90 (2022).

\bibitem{LinaParkdiffusion18}
{\sc L.~Zhao and E.-J. Park}, {\em A staggered discontinuous {Galerkin} method
  of minimal dimension on quadrilateral and polygonal meshes}, SIAM J. Sci.
  Comput., 40 (2018), pp.~A2543--A2567.

\bibitem{ZhaoPark20}
{\sc L.~Zhao and E.-J. Park}, {\em A lowest-order staggered {DG} method for the
  coupled {Stokes--Darcy} problem}, IMA J. Numer. Anal., 40 (2020),
  pp.~2871--2897.

\bibitem{ZhaoPark2020JSC}
{\sc L.~Zhao and E.-J. Park}, {\em A new hybrid staggered discontinuous
  galerkin method on general meshes}, Journal of Scientific Computing, 82
  (2020), p.~12.

\bibitem{LinaParkelasticity20}
{\sc L.~Zhao and E.-J. Park}, {\em A staggered cell-centered {DG} method for
  linear elasticity on polygonal meshes}, SIAM J. Sci. Comput., 42 (2020),
  pp.~A2158--A2181.

\bibitem{ZhaoParkShin19}
{\sc L.~Zhao, E.-J. Park, and D.~w.~Shin}, {\em A staggered {DG} method of
  minimal dimension for the {Stokes} equations on general meshes}, Comput.
  Methods Appl. Mech. Engrg., 345 (2019), pp.~854--875.

\end{thebibliography}

\end{document}